\documentclass{amsart}
\usepackage[utf8]{inputenc}
\usepackage{amsmath}
\usepackage{amssymb}
\usepackage{verbatim}
\usepackage{geometry}
\usepackage{tikz}
\usepackage{esint}
\usepackage{soul}
\usepackage[colorlinks=true,urlcolor=blue,citecolor=red,linkcolor=blue,linktocpage,pdfpagelabels,bookmarksnumbered,bookmarksopen]{hyperref} 
\usepackage[hyperpageref]{backref}

\date{\today}

\newtheorem{thm}{Theorem}[section]
\newtheorem{cor}[thm]{Corollary}
\newtheorem{prop}[thm]{Proposition}
\newtheorem{lemma}[thm]{Lemma}
\theoremstyle{definition}
\newtheorem{definition}[thm]{Definition}

\newtheorem{rem}[thm]{Remark}
\newtheorem*{ack}{Acknowledgments}

\newcommand{\weakst}{\stackrel{\ast}{\rightharpoonup}}

\newcommand\N{{\mathbb N}}
\newcommand\Z{{\mathbb Z}}
\newcommand\Q{{\mathbb Q}}
\newcommand\R{{\mathbb R}}

\newcommand\A{{\mathcal A}}

\numberwithin{equation}{section}
\let\wto\rightharpoonup
\let\phi\varphi

\title[Stochastic homogenization of nonconvex integrals with Orlicz growth]{Stochastic homogenization of nonconvex unbounded integral functionals with generalized Orlicz growth} 
\author[Aruta]{Davide Aruta}
\address[D. Aruta]{Scuola Superiore Meridionale,
\newline\indent via Mezzocannone 4, 80134 Napoli, Italy}

\email{d.aruta@ssmeridionale.it}
\author[Prinari]{Francesca Prinari}

\address[F. Prinari]{Dipartimento di Scienze Agrarie, Alimentari e Agro-ambientali
\newline\indent 
Universit\`a di Pisa
\newline\indent
Via del Borghetto 80, 56124 Pisa, Italy}
\email{francesca.prinari@unipi.it}

\author[Solombrino]{Francesco Solombrino}
\address[F. Solombrino]{Dipartimento di Scienze e Tecnologie Biologiche ed Ambientali
\newline\indent 
 Università del Salento, 
 \newline\indent
Complesso Ecotekne, via Monteroni 165, 73100 Lecce, Italy}
\email{francesco.solombrino@unisalento.it}

\subjclass[2020]{49J45, 49J55, 60G10, 35J70, 46E30, 46E35}
\keywords{$\Gamma$-convergence, Stochastic homogenization, nonconvex integral functionals, Musielak-Orlicz-Sobolev Spaces}

\begin{document}

\begin{abstract}
We consider the homogenization of random integral functionals which are possibly unbounded, that is, the domain of the integrand is not the whole space and may depend on the space-variable. In the vectorial case, we develop a complete  stochastic homogenization theory for nonconvex unbounded functionals with convex growth of generalized Orlicz-type, under a standard set of assumptions in the field, in particular a coercivity condition of order $p^->1$, and an upper bound of order $p^+<\infty$.
The limit energy is defined in a possibly anisotropic Musielak-Orlicz space, for which approximation results with smooth functions are provided. The proof is based on the localization method of $\Gamma$-convergence and a careful use of truncation arguments.
\end{abstract}

\maketitle

\begin{center}
\begin{minipage}{10cm}
\small
\tableofcontents
\end{minipage}
\end{center}

\section{Introduction}
Homogenization theory addresses the passage from microscopic to macroscopic descriptions of heterogeneous media. Within the calculus of variations, $\Gamma$-convergence provides a natural framework for homogenization, as it guarantees convergence of minimizers and minimal values and leads to an intrinsic characterization of effective energies. A systematic exposition of this approach for variational integrals is given in the monograph by Braides and Defranceschi~\cite{BDF}.

\noindent{\bf General variational setting.}
The prototypical objects of study are families of functionals
\[
F_\varepsilon(u)
:=
\int_U f\!\left(\frac{x}{\varepsilon}, D u(x)\right)\,dx,
\qquad
u\in W^{1,p}(U,\mathbb{R}^N),
\]
where $f(x,\Sigma)$ is a Carath\'eodory integrand oscillating on the microscopic scale $\varepsilon>0$.
In the \emph{periodic case}, $f(\cdot,\Sigma)$ is periodic in $x$. In the \emph{stochastic case}, one considers a probability space $(\Omega,\mathcal{F},\mathbb{P})$ endowed with a measure-preserving ergodic group action $(\tau_z)_{z\in\mathbb{Z}^d}$ and integrands which comply with the stochastic periodicity condition (stationarity)
\[
f(\omega,x+z,\Sigma)=f(\tau_z\omega, x,\Sigma), \quad \hbox{ for every } (\omega,z,\Sigma)\in \Omega\times \Z^d\times {\bf M}^{N\times d}, \hbox{ for a.e. } x\in U . 
\]
Under appropriate assumptions, the family $(F_\varepsilon)$ $\Gamma$-converges as $\varepsilon\to0$ to a deterministic functional
\[
F_{\mathrm{hom}}(u)
=
\int_U f_{\mathrm{hom}}(D u(x))\,dx,
\]
where $f_{\mathrm{hom}}$ is the homogenized density.
A cornerstone of the theory is the integral representation theorem for $\Gamma$-limits of equi-coercive families of integral functionals. If $(F_\varepsilon)$ satisfies suitable locality, lower semicontinuity, and coercivity assumptions, then any $\Gamma$-limit admits a representation of the form
\[
F(u)
=
\int_\Omega f_{\mathrm{hom}}(x,D u(x))\,dx.
\]
This structural property reduces homogenization to the identification of the effective integrand $f_{\mathrm{hom}}$, typically characterized through asymptotic minimization problems (cell formulas) or ergodic arguments.
The works of Dal Maso and Modica~\cite{dmmodica} established a rigorous $\Gamma$-convergence framework for stochastic homogenization of integral functionals under standard $p$-growth conditions. 
For $\Sigma\in\mathbf{M}^{N\times d}$ and a bounded Lipschitz set $A\subset\mathbb{R}^d$, they consider the random set function
\[
\mu(A,\omega)
:=
\inf\left\{
\int_A f(\omega,x,\Sigma+D \varphi(x))\,dx
:
\varphi\in W^{1,p}_0(A,\mathbb{R}^N)
\right\}.
\]
The mapping $A\mapsto \mu(A,\omega)$ is subadditive and stationary. The Akçoglu--Krengel subadditive ergodic theorem~\cite{AK} then yields, for cubes $Q_R$,
\[
\lim_{R\to\infty}
\frac{1}{|Q_R|}
\mu(Q_R,\omega)
=
f_{\mathrm{hom}}(\Sigma)
\quad\text{for }\mathbb{P}\text{-a.e. }\omega,
\]
with a deterministic limit $f_{\mathrm{hom}}(\Sigma)$. This result ensures the existence of the homogenized density and explains the deterministic macroscopic behavior despite microscopic randomness. Later, in \cite{Messaoudi-Michaille-94} Messaoudi and Michaille  treated the homogenization of \emph{quasiconvex stationary ergodic} integrands satisfying a \emph{polynomial standard growth condition}, following Dal Maso and Modica's approach.

\noindent{\bf Beyond standard growth.}
Classical homogenization assumes two-sided polynomial $p$-growth. Recent years, however, witnessed an increasing interest for non-standard growth conditions. They account for the case of unbounded integrands (\cite{carbone}), that is the finiteness domain is not fixed and may depend on the space variable. A starting point is represented by the results obtained by Braides and Defranceschi, who treated the case of \emph{nonconvex periodic} integrands (see also~\cite{CorboEsposito-DeArcangelis-92} in the convex case) where $f$ is periodic in $x$ and satisfies the \emph{polynomial non-standard growth condition}
\begin{equation*}
\frac{1}{C}|\Sigma|^p-C\,\leq\, f(x,\Sigma)\,\leq \, C\left(1+|\Sigma|^q\right)
\end{equation*}
for some $q<p^*$ (with $p^*$ the Sobolev-conjugate of $p>1$), and the \emph{doubling property} (also called \emph{$\Delta_2$ condition})
\begin{equation}\label{eq:intro-doubling}
 f(x,2\Sigma)\le C(1+f(x, \Sigma)).
\end{equation}
Their results were also generalized  by Zhikov, Kozlov and Oleinik  \cite[Chapter~15]{ZKO94} to the case of \emph{convex stationary ergodic} integrands.
More recently, Gloria and Duerinckx~\cite{GloriaDuerinckx} extended stochastic homogenization to integrands that may be nonconvex and unbounded from above, provided they satisfy a convex coercive lower bound of order $p>d$. Their result shows that $\Gamma$-convergence homogenization remains valid well beyond uniformly bounded settings.
Ruf and Schäffner~\cite{RS23} established stochastic homogenization for convex integral functionals \emph{without any prescribed growth condition}. Instead of polynomial bounds, their approach relies on convexity and a structural property allowing componentwise truncations of competitors without uncontrolled increase of the energy. This significantly broadens the class of admissible integrands and further demonstrates the robustness of the $\Gamma$-convergence framework. They also show, that, if one renounces to properties allowing for componentwise truncation, the coercitivity lower bound in \cite{GloriaDuerinckx} can be improved to $p>d-1$. This leads us to the purpose of the present paper.

\noindent{\bf Description of our results.}
In this paper we are concerned with the periodic and stochastic homogenization in the vector-valued setting of possibly nonconvex unbounded integrands with a convex growth of the form
\begin{equation}\label{intro: control}
\frac{1}{C}g(x, |\Sigma|)\,\leq\, f(x,\Sigma)\,\leq \, C(1+g(x, |\Sigma|))
\end{equation}
where $g$ satisfies the customary assumptions in the setting of the theory of generalized Orlicz-Sobolev spaces (see \cite{HarHas}).  In particular, a coercivity condition of order $p^->1$ and an upper bound of order $p^+<\infty$ are assumed; differently from \cite{BDF}, no relationship between $p^-$ and $p^+$ is imposed, while, in comparison to \cite{GloriaDuerinckx} and \cite{RS23}, the coercivity order can be lowered arbitrarily close to $1$.

The underlying idea is to build up on the model case $f=g$: here, as $g$ is radially increasing and convex, one can show that the homogenized density $\gamma$ (which needs not be radial itself) inherits a stronger form of the doubling condition \eqref{eq:intro-doubling}, tailored to matrix multiplication and  taking the form
\begin{equation}\label{eq:intro-strongdoubling}
\gamma(A\Sigma)\le C_A\gamma(\Sigma)\,,
\end{equation}
where the costant $C_A$ is uniformly bounded when $A$ varies in a compact subset of $\mathbf{M}^{N\times d}$. It allows one to deal with smooth truncations in the vector-valued setting. In particular, it is also used to prove an approximation result with smooth functions (Theorem \ref{approximgamma} and Corollary \ref{senzaLinfty}), which generalizes the result of \cite[Lemma 3.6]{muller} for strongly star-shaped domains to the case of general Lipschitz domains. With this, the Musielak-Orlicz-Sobolev space $W_\gamma(U,\mathbb R^N)$ of functions $u\in W^{1,1}(U,\mathbb R^N)$ satisfying 
\[
\int_U \gamma(Du(x))\,\mathrm{d}x<\infty
\]
builds the ideal framework for an integral representation result for local functionals (Theorem \ref{repr}) and to apply the localization method of $\Gamma$-convergence \cite[Chapters 18-20]{DM} to analyse the homogenized limit of the functionals $F_\varepsilon$. A key tool to this aim is the subadditivity estimate in Lemma \ref{subadditivityF}. Its proof makes use of smooth truncations, and with the help of \eqref{intro: control} and \eqref{eq:intro-strongdoubling} reproduces in our setting an idea which has already been used in the context of the homogenization in the vector-valued setting, for instance, in \cite[Theorem 4.3]{CDSZ}.

Our main result is Theorem \ref{main}, that states (under \eqref{intro: control} and the usual stationarity and ergodicity assumptions) the $\Gamma$-convergence of the random integral functionals
\[
F_{\varepsilon}(\omega, u) =  
\displaystyle \int_U f\left(\omega, \frac{x}{\varepsilon}, Du(x)\right)\,\mathrm{d}x
\]
to a functional of the form
\[
F( u)=\int_U \varphi( Du(x))\,\mathrm{d}x
\]
with domain $W_\gamma(U,\mathbb R^N)$, where $\gamma$ is the density corresponding to the case $f=g$.

\bigskip

\noindent{\bf Outline of the paper.} The paper is organized as follows. In Section \ref{notaz} we fix the basic notation and definitions that are needed throughout the paper. Section \ref{secmusielak} presents the required background about Musielak-Orlicz spaces, together with a technical lemma that is useful for truncation arguments. Section \ref{convex homog} deals with the case $f=g$ of the deterministic homogenization of integral functionals with convex, radially increasing densities. Under the extra assumption of periodicity in the space variable, we also characterize the limit density through a cell formula. In Section \ref{nonconvexcase} we prove a result of deterministic homogenization in the nonconvex case. In Section \ref{stochastic} we prove two results of stochastic homogenization, one in the convex case and the other in the nonconvex setting, that is the main result of the paper.

\begin{ack}
The authors are   members of the {\it Gruppo Nazionale per l'Analisi Matematica, la Probabilit\`a
e le loro Applicazioni} (GNAMPA) of the Istituto Nazionale di Alta Matematica (INdAM).  Francesca Prinari thanks the  Dipartimento di Matematica e Fisica  “E. De Giorgi” in Lecce for its kind hospitality.
\end{ack}
\section{Notation and Background}\label{notaz}

Let $U\subseteq \mathbb R^d$ be a bounded open set. We denote by $\A({U })$ the class of the open subsets of $U$ and by $\A_0({U })$ the class of the Lipschitz open subsets of $U$, respectively. 
Let $E_{\varepsilon}\colon L^1({U },\R^N)\times\A({U })\to [0,+\infty]$ be a family of integral functionals
of the form \begin{equation*}
    E_{\varepsilon}(u,V)=\int_V e\left(\frac x \varepsilon, Du\right)\,\mathrm{d}x.
\end{equation*}
In order to prove our $\Gamma$-convergence results, we will often use a localization procedure, hence we need the following definition of localized $\Gamma$-liminf and $\Gamma$-limsup. Given a subsequence ${\varepsilon_k}\to 0$, for every $V\in \mathcal A({U })$ we define  \begin{align*}
    E'(u,V)=\Gamma(L^1)\text{-}\liminf_{k\to +\infty} E_{\varepsilon_k}(u,V)=\inf\{\liminf_{k\to +\infty}E_{\varepsilon_k}(u_k,V)\mid u_k\to u \text{ in } L^1(V,\R^N)\}, \\
E''(u,V)=\Gamma(L^1)\text{-}\limsup_{k\to +\infty} E_{\varepsilon_k}(u,V)=\inf\{\limsup_{k\to +\infty}E_{\varepsilon_k}(u_k,V)\mid u_k\to u \text{ in } L^1(V,\R^N)\}.
\end{align*}
Notice that $E'$ and $E''$ may depend on the subsequence $\varepsilon_k$.

We remark that $E_{\varepsilon_k}$ $\Gamma$-converges to $E$ in the strong topology of $L^1({U },\R^N)$ is equivalent to $E'(u,{U })=E''(u,{U })=E(u,{U })$ for every $u\in L^1({U },\R^N)$, in which case we write 
\begin{equation*}
\Gamma(L^1)\text{-}\lim_{k\to +\infty} E_{\varepsilon_k}=E. 
\end{equation*} The Urysohn property of $\Gamma$-convergence guarantees that, if $X$ is a first-countable topological space (for instance, a metric space), $E_{\varepsilon_k}$ $\Gamma$-converges to $E$ in the topology of $X$ if every subsequence of $E_{\varepsilon_k}$ has a further subsequence that $\Gamma$-converges to $E$ (see \cite[Proposition 8.3]{DM}).

In the following,  $\mathcal B({U }) $ will be the $\sigma$-algebra of the Borel subsets of $U$.  For every Lebesgue measurable set  $B\subseteq \R^d$,  we denote by $|B|$  the Lebesgue measure of $B$.
Moreover,  we will say that a subset $\mathcal D\subseteq \A_0({U })$ is dense in $\A({U })$  if, for every $A_1,A_2\in \A(U)$ with $A_1\Subset A_2$ there exists $D\in \mathcal D$ such that $A_1\Subset D\Subset A_2$ (for the existence of such a family,  see, e.g.,  \cite[Example 14.8]{DM}).
We adopt the notation $Q_r(x)$ for the open cube centered in $x$ and with its sides parallel to the Cartesian axes and of length $r$. We introduce the following notation for cell minimum problems. Given an open set $U\subseteq \R^d$ and  a   functional $H\colon L^1({U },\R^N)\times\A({U })\to [0,+\infty)$, for every $u\in L^1(U ,\R^N)$ and $V\in \mathcal A(U)$ we set
\begin{equation}\label{eq: minimointornobordo}
   m_H(u,V)=\inf \{H(v,V)\mid v\in L^1(U ,\R^N), v=u \text{ in a neighborhood of } \partial V\}.
\end{equation}
We denote by ${\bf M}^{N\times d} $ and  by ${\bf M}_{\Q}^{N\times d}$  the set of the $N\times d$ matrices whose entries are real  and  rational numbers, respectively. The symbol $|\cdot|$ stays in this context for the Frobenius norm, while occasionally $\Vert\cdot\Vert$ will denote the operator norm.

\begin{definition}[Measure-preserving group action]
A measure-preserving additive group action on a probability space $(\Omega, \mathcal{F}, \mathbb{P})$ is a family $
\tau := \{\tau_z\}_{z \in \mathbb{Z}^d}$ of measurable maps $\tau_z : \Omega \to \Omega$ satisfying the following properties:
\begin{enumerate}
    \item the map $(\omega, z) \mapsto \tau_z(\omega)$ is $(\mathcal{F} \otimes \mathcal{L}^d, \mathcal{F})$-measurable;
    
    \item $\mathbb{P}(\tau_z E) = \mathbb{P}(E)$, for every $E \in \mathcal{F}$ and every $z \in \mathbb{Z}^d$;
    
    \item $\tau_0 = \mathrm{id}_{\Omega}$ and $\tau_{z_1+z_2} = \tau_{z_2} \circ \tau_{z_1}
    \quad \text{for every } z_1, z_2 \in \mathbb{Z}^d.$
\end{enumerate}

If, additionally, every $\tau$-invariant set $E$ has either probability $0$ or $1$, then
$\tau$ is called an \emph{ergodic} measure-preserving group action.
\end{definition}

Note that, in the previous definition, the group acting on $\Omega$ is $\Z^d$, whereas, for other authors, in the definition of measure-preserving group action it is $\R^d$ that acts on the probability space.
\begin{definition}[Subadditive process]\label{defsubproc}
Let $(\Omega, \mathcal{F}, \mathbb{P})$ be a probability space and let  $\tau$ be a measure-preserving group-action on it.  Given a bounded Lipschitz open set
 $U\subseteq \mathbb R^d$, we say that  $\mu : \Omega \times \mathcal{A}_0(U) \to [0, +\infty)$ is a (discrete) subadditive process if it  satisfies 
the following properties:

\begin{enumerate}
    \item \emph{(integrability)} for every $V \in \mathcal{A}_0(U)$, $\mu(\cdot, V)$ belongs to $L^1(\Omega)$;
    
    \item \emph{(stationarity)} for every $\omega \in \Omega$, $V \in \mathcal{A}_0(U)$, and $z \in \mathbb{Z}^d$, it holds
    \[
    \mu(\omega, V + z) = \mu(\tau_z \omega, V);
    \]
    
    \item \emph{(subadditivity)} for every $\omega \in \Omega$, $V \in \mathcal{A}_0(U)$,  for every \emph{finite} family $(V_i)_{i \in I} \subset \mathcal{A}_0(U)$ of pairwise disjoint sets such that $V_i \subset V$ for every $i \in I$ and 
    \[
    \left| V \setminus \bigcup_{i \in I} V_i \right| = 0,
    \]
    there holds
    \[
    \mu(\omega, V) \le \sum_{i \in I} \mu(\omega, V_i).
    \]
\end{enumerate}

If $\tau$ is ergodic (that is, the probability of the $\tau$-invariant sets is $0$ or $1$),  then $\mu$ is called a \emph{subadditive ergodic process}.
\end{definition}

\section{N-Functions and Anisotropic Musielak-Orlicz Spaces}\label{secmusielak}
In this section, we introduce the definitions and statements about the theory of Musielak-Orlicz spaces that are required later in our exposition. Here we follow \cite{Chlebicka}.
\begin{definition}
    We say that $m\colon [0,+\infty)\to [0,+\infty)$ is a \emph{Young function} if it satisfies:
    \begin{enumerate}
\item $m(s) = 0 \Longleftrightarrow s = 0;$
\item $m$ is convex;
\item $m$ is superlinear at zero and at infinity, i.e.
\[
\lim_{s \to 0^+} \frac{m(s)}{s} = 0 \quad \text{and} \quad \lim_{s \to +\infty} \frac{m(s)}{s} = \infty.
\]
\end{enumerate}
\end{definition}

Even though most texts call such a mapping an $N$-function, we reserve this name for a $x$-dependent and anisotropic function.
 In the following definitions,  ${X } \subset \mathbb{R}^d$ will be a measurable set.

\medskip

\begin{definition}[$N$-function]\label{def:N-fun} A function $M\colon {X} \times {\bf M}^{N\times d} \to [0, \infty)$ is called an \emph{$N$-function} if it satisfies the following conditions:
\begin{enumerate}
\item $M$ is a Carathéodory function (i.e.\ measurable with respect to $x$ and continuous with respect to the second variable);
\item $M(x,0) = 0$ and $\Sigma \mapsto M(x,\Sigma)$ is a convex function for a.e.\ $x \in {X}$;
\item (even in $\Sigma$) $M(x,\Sigma) = M(x,-\Sigma)$ for a.e.\ $x \in {X }$ and all $\Sigma \in {\bf M}^{N\times d}$;
\item (boundedness by Young functions) there exist two Young functions $m_1, m_2 : [0,\infty) \to [0,\infty)$ such that for a.e.\ $x \in {X }$ and all $\Sigma \in {\bf M}^{N\times d}$
\begin{equation}\label{m1m2}
m_1(|\Sigma|) \le M(x,\Sigma) \le m_2(|\Sigma|).
\end{equation}
\end{enumerate}
\end{definition}

\begin{rem}
By equivalence of all norms in finite dimensional spaces, in formula \eqref{m1m2} we can actually require that the functions $m_1$ and $m_2$ depend on any matrix norm $\|\cdot\|$.
\end{rem}

We also remark that an $N$-function $M\colon {X} \times {\bf M}^{N\times d} \to [0,\infty)$ could be inhomogeneous and anisotropic, where
\begin{itemize}
\item \emph{inhomogeneity} means dependence on the spatial variable $x \in {X}$,
\item \emph{anisotropy} means dependence on $\Sigma\in {\bf M}^{N\times d}$, not necessarily via $|\Sigma|$.
\end{itemize}
When dealing with $x$-independent, possibly anisotropic functions $M\colon  {\bf M}^{N\times d} \to [0,\infty)$ satisfying the conditions in Definition \ref{def:N-fun} we will keep the name $N$-function to keep a unified exposition.

\begin{definition}[$\Delta_2$-condition]
We say that an $N$-function $M\colon {X } \times {\bf M}^{N\times d} \to [0,\infty)$ satisfies the \emph{$\Delta_2$-condition} (denoted $M \in \Delta_2$) if there exists a constant $c_{\Delta_2} > 0$ and a nonnegative integrable function $h\colon {X} \to \mathbb{R}$ such that
\[
M(x, 2\Sigma) \le c_{\Delta_2} M(x, \Sigma) + h(x) \qquad \text{for a.e. } x \in {X } \text{ and all } \Sigma \in {\bf M}^{N\times d}.
\]
\end{definition}

\begin{definition}[$\nabla_2$-condition]
We say that an $N$-function $M\colon {X } \times {\bf M}^{N\times d} \to [0,\infty)$ satisfies the \emph{$\nabla_2$-condition} (and we write  $M \in \nabla_2$) if there exists a constant $l_{\nabla_2} > 0$ and a nonnegative integrable function $h\colon {X } \to \mathbb{R}$ such that
\[
2 l_{\nabla_2} M(x, \Sigma) \le M(x, l_{\nabla_2}\Sigma) + h(x) \qquad \text{for a.e. } x \in {X } \text{ and all } \Sigma \in {\bf M}^{N\times d}.
\]
\end{definition}

\begin{definition}[Modular] 
By a \emph{modular} we mean a functional $\rho_M$ defined on the set of measurable functions $\xi \colon {X } \to {\bf M}^{N\times d}$ given by the following formula
\[
\rho_M(\xi) := \int_{X } M(x, \xi(x)) \, \mathrm{d}x,
\]
where $M\colon {X } \times {\bf M}^{N\times d} \to [0,\infty)$  is an $N$-function.
\end{definition}

\begin{definition}[Musielak-Orlicz Spaces]
    Let  $M\colon {X }\times {\bf M}^{N\times d}\to [0,+\infty)$ be an $N$-function. We define  \begin{enumerate}
        \item the \emph{generalized Musielak-Orlicz class} $\mathcal L_M({X },{\bf M}^{N\times d})$ as the set of all measurable functions $\xi\colon {X }\to {\bf M}^{N\times d}$ such that $\rho_M(\xi)<\infty$;
        \item the \emph{generalized Musielak-Orlicz space} $L_M({X },{\bf M}^{N\times d})$ as the smallest linear space containing $\mathcal L_M({X },{\bf M}^{N\times d})$.
    \end{enumerate}
\end{definition}

\begin{rem}\label{Lpmusielak}
    For each $p\in (1,\infty)$, the Musielak-Orlicz classes and spaces respectively defined by the $N$-functions $M^p(x,\Sigma)=\frac{|\Sigma|^p}p$ and $M^{\infty}(x,\Sigma)=\infty\chi_{(1,\infty)}(|\Sigma|)$ are the well-known spaces $L^p({X },{\bf M}^{N\times d})$ and $L^{\infty}({X },{\bf M}^{N\times d})$.
\end{rem}

\begin{definition}[Luxemburg norm]
Let  $M\colon {X }\times {\bf M}^{N\times d}\to [0,+\infty)$ be an N-function. Then the mapping $
\|\cdot\|_{L_M} : L_M({X }; {\bf M}^{N\times d}) \to [0,\infty)
$
given by
\[
\|\xi\|_{L_M} := \inf \left\{ \lambda > 0 : \int_{X} M\!\left(x, \frac{\xi(x)}{\lambda}\right) \, \mathrm{d}x \le 1 \right\}
\]
defines a norm, as proven in \cite[Lemma 3.1.9]{Chlebicka}. We call it the \emph{Luxemburg norm}.
\end{definition}

\begin{rem}
Observe that the Luxemburg norm is the so-called Minkowski functional generated by a convex, absorbing and balanced set
\[
A = \left\{ \xi : {X } \to {\bf M}^{N\times d} \ \text{measurable} \ : \ \int_{X } M(x, \xi(x)) \, \mathrm{d}x \le 1 \right\}.
\]
\end{rem}

\begin{rem}\label{embeddingMusielak}
We explicitly note that, if $M\colon {X }\times {\bf M}^{N\times d}\to [0,+\infty)$ is an   $N$-function and $X$ has finite measure,  thanks to  superlinearity of the function $m_1$ in \eqref{m1m2},  then the  continuous embedding   $L_{M}({X };{\bf M}^{N\times d}) \subset L^{1}({X };{\bf M}^{N\times d})$ holds.
\end{rem}

We now introduce an anisotropic version of the $\Delta_2$ property which will be crucial to the density results we are going to prove and use throughout the paper.

\begin{definition}\label{strongdelta2} We say that a function $\gamma\colon{\bf M}^{N\times d}\to [0,+\infty)$ satisfies the \emph{reinforced-$\Delta_2$ property}
 if there exists a function $C\colon [0,+\infty)\to [0,+\infty)$ with superlinear growth near 0, that is \[
 \lim_{s\to 0^+} \frac{C(s)} {s} =0,
 \] such that for every $A \in{\bf M}^{N\times N}$ and $\Sigma\in {\bf M}^{N\times d}$ it holds
 \begin{equation}\label{delta2}\gamma(A \Sigma)\leq C(\|A\|)\gamma(\Sigma),\end{equation}
with $\|A\|$ denoting the operator norm.
\end{definition}

\begin{lemma}
    If  an $N$-function  $\gamma\colon{\bf M}^{N\times d}\to [0,+\infty)$   satisfies the reinforced-$\Delta_2$ property, then $\gamma$ satisfies the $\Delta_2$ and $\nabla_2$ properties.
\end{lemma}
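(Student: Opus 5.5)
Both properties follow by choosing $A$ to be a scalar multiple of the identity $I_N\in{\bf M}^{N\times N}$ in \eqref{delta2}. Since $\|tI_N\|=t$ for $t\ge 0$, the reinforced-$\Delta_2$ property gives
\[
\gamma(t\Sigma)\le C(t)\,\gamma(\Sigma)\qquad\text{for every } t\ge 0,\ \Sigma\in{\bf M}^{N\times d}.
\]
The function $\gamma$ is $x$-independent, so the integrable function $h$ in both definitions can be taken to be $h\equiv 0$. (Any integrable $h\ge 0$ would also do if $X$ has finite measure.)

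\emph{$\Delta_2$.} Take $A=2I_N$, which has operator norm $2$. Then $\gamma(2\Sigma)\le C(2)\gamma(\Sigma)$, so the $\Delta_2$-condition holds with $c_{\Delta_2}=\max\{C(2),1\}>0$ and $h=0$.

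\emph{$\nabla_2$.} We need some $l>0$ with $2l\,\gamma(\Sigma)\le \gamma(l\Sigma)$. We apply \eqref{delta2} with the roles reversed: write $\Sigma=(1/l)(l\Sigma)$ and use $A=l^{-1}I_N$. This gives
\[
\gamma(\Sigma)\le C(1/l)\,\gamma(l\Sigma).
\]
Set $s=1/l$. It then suffices to find $s>0$ with $2C(s)/s\le 1$, because then
\[
2l\,\gamma(\Sigma)=\frac{2}{s}\,\gamma(\Sigma)\le \frac{2C(s)}{s}\,\gamma(l\Sigma)\le\gamma(l\Sigma).
\]
Such an $s$ exists by the superlinearity of $C$ at $0$: $C(s)/s\to 0$ as $s\to0^+$. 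Choose $s_0>0$ with $C(s_0)/s_0\le 1/2$ and set $l_{\nabla_2}=1/s_0>0$. This yields $2l_{\nabla_2}\gamma(\Sigma)\le\gamma(l_{\nabla_2}\Sigma)$ for all $\Sigma$, with $h=0$.

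The only point needing care is the $\nabla_2$ step. There one must apply \eqref{delta2} to the rescaled matrix $l\Sigma$ and use a small scaling $1/l$, so that the hypothesis $\lim_{s\to0^+}C(s)/s=0$ is exactly what makes the factor $2l\,C(1/l)$ at most $1$. Nothing else is used: convexity, evenness and the bounds by Young functions in the definition of $N$-function play no role beyond the fact that $\gamma\ge0$.
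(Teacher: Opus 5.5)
Your proof is correct and follows essentially the same route as the paper: $\Delta_2$ comes from \eqref{delta2} with $A=2I_N$, and $\nabla_2$ comes from writing $\Sigma=(l^{-1}I_N)(lI_N)\Sigma$ and choosing $l$ with $2l\,C(l^{-1})\le 1$, using the superlinearity of $C$ at $0$. Your extra remarks (taking $h\equiv 0$ and $c_{\Delta_2}=\max\{C(2),1\}$) simply make explicit the step the paper calls trivial.
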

\begin{proof}
    The reinforced-$\Delta_2$ property trivially implies the $\Delta_2$ property. As for $\nabla_2$, denote by $I_N$ the $N\times N$ identity matrix. Fix $\Sigma\in {\bf M}^{N\times d}$. Having chosen $l>0$ such that $2lC(l^{-1})\le 1$ (thanks to the superlinear growth of $C$ near 0), it holds \begin{equation*}
        2l\gamma(\Sigma)=2l\gamma((l^{-1}I_N)(lI_N)\Sigma)\le 2lC(l^{-1})\gamma(l\Sigma)\le \gamma(l\Sigma).
    \end{equation*} \end{proof}

In Definition \ref{strongdelta2} we required $C$ to have superlinear growth near 0 since otherwise the conclusions of the previous lemma would be false, even in the case $d=N=1$. Indeed, in this case, there exist $N$-functions satisfying  property \eqref{delta2}  but not the $\nabla_2$-condition (see, for instance, the counterexample at the beginning of page 34 in \cite{Chlebicka}).

A fundamental tool for our results will be the following approximation result of convex bulk integrands depending on the gradient with smooth functions. Below this result will be proved for bounded functions, with no other requirements than homogeneity and convexity, and later extended to a general Sobolev function if $\gamma$ satisfies the reinforced-$\Delta_2$ condition.
\begin{thm}\label{approximgamma}
Let $\gamma\colon{\bf M}^{N\times d}\to [0,+\infty)$ be a convex  function. 
Let ${U }\subseteq \mathbb R^N$ be a bounded Lipschitz open set and let $u\in W^{1,1}({U },\R^N)\cap L^{\infty}({U },\R^N)$ be such that \begin{equation}\label{finitogamma}\int_{{U }}\gamma (D u)\, \mathrm{d}x<+\infty.\end{equation}Then,  there exists a sequence $\{u_n\}\subseteq   C^{\infty}(\overline{U },\R^N)$ such that, as $n\to +\infty$,  $u_n\to u$ in ${W^{1,1}({U },\R^N)}$ and 
\begin{equation*}\label{approx-gamma}\lim_{n\to +\infty}\int_{{U }}\gamma (D u_n)\, \mathrm{d}x= \int_{{U }}\gamma (D u)\, \mathrm{d}x.\end{equation*}
\end{thm}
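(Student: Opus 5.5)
The plan is the classical "translate inward, mollify, glue with a partition of unity" scheme. Since no growth or $\Delta_2$ assumption is available for $\gamma$, the gluing error will be absorbed by a convex combination trick, which is exactly where the hypothesis $u\in L^\infty$ enters. First note that $\gamma$ is finite and convex, hence continuous. Then, once we have $u_n\to u$ in $W^{1,1}$ (so $Du_n\to Du$ a.e. along a subsequence), Fatou's lemma gives $\int_U\gamma(Du)\le\liminf_n\int_U\gamma(Du_n)$. So only the $\limsup$ inequality has to be proved.

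\textbf{Localization.} Since $U$ is Lipschitz, cover $\overline U$ by finitely many open sets $W_0\Subset U$, $W_1,\dots,W_k$. For each $i\ge1$, $U\cap W_i$ is the subgraph of a Lipschitz function in suitable coordinates, and there is a unit vector $e_i$ with $x+se_i+B_{cs}\subset U$ for $x\in U\cap W_i$ and small $s>0$. Fix a smooth partition of unity $\{\phi_i\}_{i=0}^k$ subordinate to this cover, with $\sum_i\phi_i=1$ on $\overline U$ and hence $\sum_i D\phi_i=0$. For $\varepsilon>0$ set $u_i^\varepsilon(x)=(u*\rho_{\delta(\varepsilon)})(x+\varepsilon e_i)$, where $\rho_\delta$ is a standard mollifier and $\delta(\varepsilon)<c\varepsilon$; for $i=0$ simply mollify. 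Each $u_i^\varepsilon$ is smooth on a neighbourhood of $\overline{U\cap \mathrm{supp}\,\phi_i}$ and converges to $u$ in $W^{1,1}(U\cap \mathrm{supp}\,\phi_i)$. Moreover, by Jensen's inequality,
\[
\gamma(Du_i^\varepsilon)\le \big(\gamma(Du)*\rho_\delta\big)(\cdot+\varepsilon e_i),
\]
and the right-hand side converges to $\gamma(Du)$ in $L^1(U\cap\mathrm{supp}\,\phi_i)$, because $\gamma(Du)\in L^1(U)$ and translations are continuous in $L^1$.

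\textbf{Gluing with a convex combination.} Set $v_\varepsilon=\sum_i\phi_iu_i^\varepsilon\in C^\infty(\overline U,\R^N)$. Then
\[
Dv_\varepsilon=\sum_i\phi_iDu_i^\varepsilon+E_\varepsilon,\qquad E_\varepsilon=\sum_i(u_i^\varepsilon-u)\otimes D\phi_i .
\]
The error $E_\varepsilon$ is bounded in $L^\infty$ by $2\|u\|_\infty\sum_i\|D\phi_i\|_\infty$, uniformly in $\varepsilon$, and tends to $0$ a.e. along a subsequence. Fix $t\in(0,1)$ and consider $w_\varepsilon=(1-t)v_\varepsilon$. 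Writing
\[
Dw_\varepsilon=(1-t)\sum_i\phi_iDu_i^\varepsilon+t\Big(\tfrac{1-t}{t}E_\varepsilon\Big),
\]
convexity of $\gamma$ (applied twice, first to the two-term combination and then to the combination with weights $\phi_i$) gives
\[
\int_U\gamma(Dw_\varepsilon)\le(1-t)\sum_i\int_U\phi_i\gamma(Du_i^\varepsilon)+t\int_U\gamma\Big(\tfrac{1-t}{t}E_\varepsilon\Big).
\]
The first sum tends to $(1-t)\int_U\gamma(Du)$ by the previous step. The second term tends to $t|U|\gamma(0)$ by dominated convergence, since the argument of $\gamma$ stays in a fixed compact set and $\gamma$ is continuous there. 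Hence
\[
\limsup_{\varepsilon\to0}\int_U\gamma(Dw_\varepsilon)\le(1-t)\int_U\gamma(Du)+t|U|\gamma(0).
\]
In addition, $w_\varepsilon\to(1-t)u$ in $W^{1,1}$, and $(1-t)u\to u$ in $W^{1,1}$ as $t\to0$.

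\textbf{Conclusion and main obstacle.} A diagonal choice $t_n\to0$, $\varepsilon_n\to0$ produces $u_n\in C^\infty(\overline U,\R^N)$ with $u_n\to u$ in $W^{1,1}$ and $\limsup_n\int_U\gamma(Du_n)\le\int_U\gamma(Du)$. Together with the Fatou inequality this gives the claim. The main obstacle is the gluing term $E_\varepsilon$: without a $\Delta_2$-type bound one cannot control $\gamma(V+E_\varepsilon)$ by $\gamma(V)$ plus a small error. The shrinking by $(1-t)$, together with the uniform $L^\infty$ bound on $E_\varepsilon$ coming from $u\in L^\infty$, is what resolves it. The second, more routine, point is checking the inward-translation geometry for a general Lipschitz (not star-shaped) domain on each chart.
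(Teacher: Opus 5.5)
Your proof is correct, and its global structure is the paper's. Both use a partition of unity and the rescaling trick
\[
\gamma\big((1-t)Dv_\varepsilon\big)\le(1-t)\sum_i\phi_i\gamma(Du_i^\varepsilon)+t\,\gamma\Big(\tfrac{1-t}{t}E_\varepsilon\Big).
\]
In both, the gluing error $E_\varepsilon=\sum_i(u_i^\varepsilon-u)\otimes D\phi_i$ is kept in a fixed compact set because $u\in L^\infty$. The argument then closes with dominated convergence, a diagonal choice of the scaling parameter, and Fatou. The paper writes $t$ for your $1-t$ and sends it to $1^-$.

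The difference lies only in the local step. The paper covers $\overline U$ by sets $U_j$ with $U\cap U_j$ strongly star-shaped and quotes Lemma 3.6 of M\"uller's paper. This gives smooth $z_n^j$ with $\|z_n^j\|_\infty\le\alpha_n\|u\|_\infty$ and $\gamma(Dz_n^j)\to\gamma(Du)$ in $L^1$. The paper notes in a footnote that this $L^1$ convergence comes from M\"uller's proof rather than his statement. You instead build the local approximants by hand, using inward translation plus mollification on Lipschitz charts. Jensen's inequality gives the domination $\gamma(Du_i^\varepsilon)\le(\gamma(Du)*\rho_\delta)(\cdot+\varepsilon e_i)$, and continuity of translations in $L^1$ does the rest.

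Your route is self-contained. It avoids both the star-shaped decomposition and any reliance on what M\"uller's proof actually delivers, and it gives $\|u_i^\varepsilon\|_\infty\le\|u\|_\infty$ exactly. The paper's route is shorter because it cites these facts.

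One wording point: your domination gives $\limsup_{\varepsilon}\int_U\phi_i\gamma(Du_i^\varepsilon)\le\int_U\phi_i\gamma(Du)$, not convergence of the first sum. That inequality is all your argument uses.
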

\begin{proof}
Using, for instance,  \cite[Proposition 2.5.4]{carbone},  $\overline{U }$ can be covered by a finite collection of open sets $U_j$ such that ${U }\cap {U }_j$ are strongly star-shaped with Lipschitz boundary, that is $\overline{U }\subseteq \bigcup_{i=1}^L {U }_j.$
Let $\alpha_n>0$ be such that $\alpha_n \to 1^+$. Then, as proven in \cite[Lemma 3.6]{muller},  for every $j$ there exists a sequence $z_n^j\in C^\infty (\overline{{U }\cap {U }_j},\R^N)$ such that $z_n^j\to u$ in $W^{1,1}({U }\cap {U }_j,\R^N)$,  $$\lVert z_n^j \rVert_{L^\infty({U }\cap {U }_j,\R^N)}\le \alpha_n\lVert u \rVert_{L^\infty({U }\cap {U }_j,\R^N)}\quad \hbox{ for every $j=1,\dots, L$,\   $n\in\N$},$$ and\footnote{the statement  of \cite[Lemma 3.6]{muller} only mentions convergence of the integrals, but the proof provides indeed $L^1$-convergence (see \cite[Page 206]{muller}).}
\begin{equation*}
\lim_{n \to +\infty} \|\gamma(Dz_n^j)-\gamma(Du)\|_{L^1({U \cap U_j })}=0.
\end{equation*}
Up to replacing $z_n^j$ with a subsequence,  we can assume that  $z_n^j\to u$ and $Dz_n^j\to Du$ a.e.  in ${U \cap U_j }$  for every $j$. 
Now, given a smooth partition of unity $\{\eta_j\}$ subordinate to the covering $\{{U }_j\}$, for every $n\in\N$ we define $u_n=\sum_{j=1}^L \eta_j z_n^j$. Obviously $u_n\in C^{\infty}(\overline{U },\R^N)$. Notice that, as $\eta_j$ are compactly supported in $U_j$, by exploiting the above convergences in $L^1$,  it holds
\begin{equation}\label{partizione}
\lim_{n\to +\infty} \int_{{U }}\eta_j \gamma(Dz_n^j)\,\mathrm{d}x=\int_{{U }}\eta_j\gamma(Du)\,\mathrm{d}x
\end{equation} 
for each $j=1, \dots L$. Moreover, being  $u_n$ a convex combination of the functions $z_n^1, z_n^2,\cdots, z_n^L$, we have that $u_n\to u$ in $L^1({{U },\R^N})$.
As for the gradients, since $\sum_{j=1}^L D\eta_j=0$, we have \[Du_n-Du=\sum_{j=1}^L \eta_j (Dz_n^j-Du)+\sum_{j=1}^L D\eta_j \otimes (z_n^j-u)
    \]
and, thanks to the uniform boundedness of $D \eta_j$, we have $Du_n\to Du$ in $L^1({{U },\R^N})$. 
Now, for every fixed $t\in (0,1)$,  using  the   convexity of $\gamma$, we   obtain that
\begin{equation}\label{primoint}
\begin{split}
& \limsup_{n\to +\infty} \int_{U} \gamma(tDu_n)\, \mathrm{d}x\\
&\le t \limsup_{n\to +\infty} \sum_{j=1}^L \int_{U} \eta_j \gamma(Dz_n^j)\,\mathrm{d}x
+ (1-t) \limsup_{n\to +\infty} \int_{U} \gamma\Bigg(\sum_{j=1}^L \frac{t}{1-t} D\eta_j \otimes (z_n^j-u)\Bigg)\,\mathrm{d}x.\\
\end{split}
  \end{equation}
Thanks to the boundedness of $u$ and to the fact that 
\[
\lVert z_n^j \rVert_{L^\infty(U\cap U_j,\mathbb{R}^N)} \le \alpha_n \lVert u \rVert_{L^\infty(U,\mathbb{R}^N)}
\]
for every $j=1,\dots, L$ and for every $n\in \mathbb{N}$, we find a constant $c=c(t,\|u\|_{\infty})$ such that, for every $n\in \mathbb{N}$ and for a.e. $x\in U$, it holds
\[
\gamma\Bigg(\sum_{j=1}^L \frac{t}{1-t} D \eta_j \otimes (z_n^j-u)\Bigg)\le \sup_{\|\Sigma\|\le c}\gamma(\Sigma)<+\infty,
\]
since $\gamma$ is locally bounded by continuity. Hence, for every fixed $t\in (0,1)$, the Dominated Convergence Theorem yields  
\begin{equation}\label{secondoint}        
\lim_{n \to +\infty} \int_U \gamma\Bigg(\frac{t}{1-t} \sum_{j=1}^L D \eta_j \otimes (z_n^j-u)\Bigg)\,\mathrm{d}x = |U|\gamma(O).
\end{equation}
By combining \eqref{primoint}, \eqref{partizione} and \eqref{secondoint}, and since $\sum_{j=1}^L \eta_j=1$, we get
\begin{equation}\label{eq:inequality}
\limsup_{n\to +\infty} \int_U \gamma(t Du_n)\, \mathrm{d}x \leq 
t \int_U \gamma(Du)\, \mathrm{d}x + (1-t)\, |U|\gamma(O)
\qquad \text{for every } t\in (0,1).
\end{equation}

Now, fix a sequence $t_k$ such that $t_k\to 1^-$ as $k \to +\infty$. Thanks to \eqref{eq:inequality}, for each fixed $k$ we can iteratively find $n_k>n_{k-1}$ such that
\[
\int_U \gamma(t_k Du_{n_k})\, \mathrm{d}x
\leq \limsup_{n\to +\infty}\int_U \gamma(t_k Du_n)\,\mathrm{d}x + \frac{1}{2^k}
\leq t_k \int_U \gamma(Du)\,\mathrm{d}x + (1-t_k)\, |U|\gamma(O) + \frac{1}{2^k}.
\]
If we now set $v_k= t_k u_{n_k}$, we have that $v_k \to u$ in $W^{1,1}(U, \mathbb{R}^N)$ and, by the previous inequality,
\[
\limsup_{k\to +\infty} \int_U \gamma(Dv_k)\, \mathrm{d}x
\leq \int_U \gamma(Du)\, \mathrm{d}x.
\]
On the other hand, by Fatou's Lemma we have
\[
\liminf_{k\to +\infty} \int_U \gamma (Dv_k)\, \mathrm{d}x 
\ge \int_U \gamma (D u)\, \mathrm{d}x,
\]
which, combined with the reverse inequality above, gives the desired conclusion. \end{proof}

The following Lemma introduces a general truncation procedure which will be used several times in the paper.

\begin{lemma}\label{tronc}
Let $U$ be an open set and $u\in W^{1,1}({U },\R^N)$. Then, for every $M\in \mathbb N$, $M\geq 2$,   there exists a function $u^M\in W^{1,1}({U },\R^N)\cap L^\infty({U },\R^N) $ with $\|u^M\|_{L^\infty(U,\R^N)}\leq 2M$,  such that, for a.e. $x\in {U }$,  it holds
\begin{enumerate}
\item [$(i)$] $u^M(x)=u(x) $ if $|u(x)|\leq M$; 
 \item [$(ii)$] $u^M(x)=0 $ if $|u(x)|\geq 2M$;
\item [$(iii)$] $|u^M(x)-v^M(x)|\leq |u(x)-v(x)| $ for every $u,v\in W^{1,1}({U },\R^N)$;
\item [$(iv)$] $u^M(x)\to u(x)$  as $M\to +\infty$;
\item [$(v)$]  $|Du^M(x)|\leq |Du(x)|$ and, as $M\to +\infty$,   $|Du^M(x)|\to  |Du(x)|$.
\end{enumerate}
In particular,  as $M\to +\infty$, $u^M\to u$ in $W^{1,1}({U },\R^N)$.  
\end{lemma}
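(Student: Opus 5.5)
The natural construction is to compose $u$ with a fixed $1$-Lipschitz map of $\R^N$ that is the identity on $B_M$ and vanishes outside $B_{2M}$. Set $\phi_M\colon[0,+\infty)\to[0,+\infty)$ by $\phi_M(s)=s$ on $[0,M]$, $\phi_M(s)=2M-s$ on $[M,2M]$ and $\phi_M(s)=0$ for $s\ge 2M$. Define the radial map $T_M(y)=\phi_M(|y|)\,y/|y|$ for $y\neq0$, $T_M(0)=0$, and put $u^M:=T_M\circ u$. Then $|T_M(y)|\le M\le 2M$, so $\|u^M\|_{L^\infty}\le 2M$. Properties $(i)$ and $(ii)$ hold by construction.

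The first key step is to check that $T_M$ is $1$-Lipschitz on $\R^N$. For a radial map $y\mapsto\phi(|y|)y/|y|$ with $\phi$ piecewise $C^1$, the differential at a point where $|y|$ is not a breakpoint has the radial direction as an eigenvector with eigenvalue $\phi'(|y|)$. On the orthogonal complement it is multiplication by $\phi(|y|)/|y|$. Since $|\phi_M'|\le1$ and $0\le\phi_M(s)\le s$, the operator norm satisfies $\|DT_M\|\le1$ off the spheres $\{|y|=M\}$ and $\{|y|=2M\}$. $T_M$ is continuous and piecewise $C^1$, so integrating along segments (which cross these spheres at most finitely many times) gives $|T_M(y)-T_M(z)|\le|y-z|$. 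Property $(iii)$ then follows pointwise: $|u^M-v^M|=|T_M(u)-T_M(v)|\le|u-v|$.

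The main obstacle is the chain rule, since $T_M$ is only Lipschitz and $u$ is vector valued. I would invoke the Ambrosio--Dal Maso chain rule for Lipschitz maps composed with $W^{1,1}$ functions. Alternatively, and more elementarily, one can note that $T_M$ is $C^1$ away from the two spheres and use the following fact: on $\{|u|=M\}$ and $\{|u|=2M\}$ one has $D|u|=0$ a.e. (Stampacchia, applied to the scalar $W^{1,1}$ function $|u|$). Also, at a.e. point of these sets the tangential derivative of $u$ is what matters, and both one-sided differentials of $T_M$ act on tangential vectors as $\phi_M(|y|)/|y|$ times the identity. In either approach one gets $u^M\in W^{1,1}$ with $Du^M=DT_M(u)\,Du$ a.e., for a Borel selection of $DT_M$ of operator norm at most $1$. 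Using $|AB|\le\|A\|\,|B|$ for the Frobenius norm, this yields $|Du^M|\le|Du|$ a.e.

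Finally, for a.e. $x$ we have $|u(x)|<\infty$, so $|u(x)|\le M$ for $M$ large, and then $u^M(x)=u(x)$; this gives $(iv)$. For $(v)$, $Du^M=Du$ a.e. on $\{|u|<M\}$, and these sets exhaust $U$ up to a null set, so $Du^M\to Du$ and in particular $|Du^M|\to|Du|$ a.e. Since $|u^M|\le|u|$ and $|Du^M|\le|Du|$, dominated convergence gives $u^M\to u$ in $W^{1,1}(U,\R^N)$.
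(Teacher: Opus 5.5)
Your proof is correct. It departs from the paper only in the choice of the truncating map, and that difference matters.

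\textbf{The paper's map does not work.} The paper takes the $C^1$ map $\psi_M(\xi)=\phi_M(|\xi|)\xi$ with a cubic smoothstep multiplier, so that the classical chain rule for $C^1$ maps with bounded differential applies. It then asserts, citing CDSZ, that $\psi_M$ is $1$-Lipschitz. For this $\psi_M$ the assertion is false. Along a ray, the profile $s\,\phi_M(s)$ has derivative $8t^3-3t^2-6t+1$ with $t=s/M-1$, and this equals $-7/4$ at $s=3M/2$. Concretely:
\begin{itemize}
\item for a unit vector $e$, $|\psi_M(\tfrac{3M}{2}e)-\psi_M(2Me)|=\tfrac{3M}{4}>\tfrac{M}{2}$, which violates $(iii)$;
\item for $N=d=1$ and $u(x)=x$ on $(0,3M)$, one gets $|Du^M(3M/2)|=7/4>|Du|$, which violates $(v)$.
\end{itemize}

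\textbf{No $C^1$ map can work.} Suppose $u^M=T\circ u$ with $T$ satisfying $(i)$--$(iii)$. For $t\in[M,2M]$ the point $T(te)$ lies in $\overline{B}(Me,t-M)\cap\overline{B}(0,2M-t)$. These two balls are externally tangent, so $T(te)=(2M-t)e$. Thus $T$ is exactly your $T_M$, which has a kink on $\{|\xi|=M\}$. A $C^1$, $1$-Lipschitz truncation exists only if the outer radius is strictly larger than $2M$. So your route, with its transparent eigenvalue check of the Lipschitz constant, is the one that actually proves the lemma as stated. Its price is a chain rule for a merely Lipschitz map.

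\textbf{The chain rule step.} Citing Ambrosio--Dal Maso closes the argument. Your ``elementary'' alternative, as written, only shows that the candidate gradient $DT_M(u)Du$ is unambiguous on $\{|u|\in\{M,2M\}\}$. It does not show that it is the weak gradient there. It is easy to complete:
\begin{itemize}
\item On $\{|u|\le M\}$ no chain rule is needed: $u^M=u$ there, so $Du^M=Du$ a.e.
\item On $\{|u|\ge 2M\}$ likewise: $u^M=0$ there, so $Du^M=0$ a.e.
\item On a neighbourhood of $\{M+1/n\le|\xi|\le 2M-1/n\}$, $T_M$ agrees with a $C^1$ map of bounded differential vanishing at $0$. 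The classical chain rule then gives $Du^M=DT_M(u)Du$ a.e. on $\{M+1/n\le|u|\le 2M-1/n\}$, and you take the union over $n$.
\end{itemize}
The only input is $u^M\in W^{1,1}(U,\R^N)$. This follows by mollifying $T_M$ with an even kernel of radius less than $M$, which keeps it $1$-Lipschitz and vanishing at $0$. You then pass to the limit, using that the gradients are dominated by $|Du|$ and hence weakly compact in $L^1$. This version also yields the factorization $Du^M=A(x)Du$ with $\|A(x)\|\le 1$ and $A(x)=I$ on $\{|u|<M\}$, which the paper uses later, e.g.\ in Corollary~\ref{senzaLinfty}.
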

\begin{proof} Let us consider the   function $\phi_M\in C^1([0,+\infty))$ given by 
\[\phi_M(s)=\begin{cases}1, \qquad \hbox{ if } 0\leq s\leq M,\\
\\
2\left(\frac s M-1\right)^3-3\left(\frac {s} M-1\right)^2+1, \qquad \hbox{ if } M\leq s\leq 2M,\\
\\
0, \qquad \hbox{ if } s \geq 2M,
\end{cases}
\]
and we consider  the function $\psi_M: \R^N\to \R^N$ be given by  $$\psi_M(\xi)=\phi_M(|\xi|)\xi.$$
Then $\psi_M\in  C_0^1(\R^N,\R^N)$, $|\psi_M(\xi)|\leq 2M$ for every $\xi\in \R^N$, $\psi_M(\xi)=0$ if $|\xi|\geq 2M$. Furthermore,  $\psi_M$ is $1$-Lipschitz (see, for instance, \cite[Formula~(4.5)]{CDSZ}) so that, in particular, the operator norm of the gradient satisfies $\|D \psi_M(\xi)\|\leq 1$ for every $\xi\in \R^N$. For every $u\colon{U }\to \mathbb R^N$, we define
\begin{equation}\label{troncata}  u^M(x)=\psi_M(u(x)).
\end{equation}
Then it is easy to show that $u^M$ satisfies $(i), (ii), (iii), (iv)$.
Furthermore (see, e.g., \cite[Exercise 10.38]{Leoni}) $u^M\in W^{1,1}({U },\R^N)\cap L^\infty({U },\R^N) $ and, for a.e. $x\in {U }$, the following chain rule   holds
 \begin{equation}\label{chain} D(u^M)(x)=D\psi_M(u(x)) D u (x).\end{equation}
 Then \begin{equation}\label{monotonia}
|D(u^M)(x)|=|D\psi_M(u(x)) D u (x)|\leq  \|D\psi_M(u(x))\| |D u (x)| \leq | D u(x) | .
\end{equation}
Moreover,  if $x\in {U }$ is a Lebesgue point for $u$ and for $Du$,  and  $ |u(x)|\leq M$,  then $D(u^M)(x)=D u (x).$ 
In particular  we have that  $D(u^M)(x)\to D u (x) $ as $M\to +\infty.$
Now $$\|u^M-u\|_{L^1({U },\R^N)}=\int_{M\leq \|u\|\leq 2M } \phi_M(|u(x)|)|u(x)|\,\mathrm{d}x\, +  \int_{|u(x)|\geq 2M } |u(x)|\,\mathrm{d}x\, \leq \int_{ \|u\|\geq M } |u(x)|\,\mathrm{d}x $$
which tends to $0$ as $M\to +\infty$.
Finally, by using \eqref{monotonia} and the Dominated convergence theorem, we get that $\|Du^M-Du\|_{L^1({U },\R^N)}\to 0$ as $M\to +\infty$. This concludes the proof.
\end{proof}

With the previous Lemma and the reinforced-$\Delta_2$ property we can extend Theorem \ref{approximgamma} to a general $u\in W^{1,1}({U },\R^N)$ when $U$ is a bounded Lipschitz open set.

\begin{cor} \label{senzaLinfty} Let ${U }\subseteq \R^d$ be a bounded Lipschitz open set and let $\gamma\colon{\bf M}^{N\times d}\to [0,+\infty)$ be a convex  function satisfying  the reinforced-$\Delta_2$ property. Then the conclusions of  Theorem \ref{approximgamma} holds for every  $u\in W^{1,1}({U },\R^N)$ satisfying \eqref{finitogamma}.

\end{cor}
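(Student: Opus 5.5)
The plan is a diagonal argument. First approximate $u$ by its truncations $u^M$ from Lemma \ref{tronc}, then apply Theorem \ref{approximgamma} to each $u^M$, which is bounded. The work lies in showing
\[
\lim_{M\to+\infty}\int_U\gamma(Du^M)\,\mathrm{d}x=\int_U\gamma(Du)\,\mathrm{d}x .
\]

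\textbf{Step 1: $u^M$ is admissible.} By Lemma \ref{tronc}, $u^M\in W^{1,1}(U,\R^N)\cap L^\infty(U,\R^N)$. By the chain rule \eqref{chain}, $Du^M(x)=D\psi_M(u(x))\,Du(x)$ for a.e. $x$, where $A(x):=D\psi_M(u(x))\in{\bf M}^{N\times N}$ satisfies $\|A(x)\|\le 1$. The reinforced-$\Delta_2$ property \eqref{delta2} then gives
\[
\gamma(Du^M(x))\le C(\|A(x)\|)\,\gamma(Du(x))\le \Big(\sup_{s\in[0,1]}C(s)\Big)\gamma(Du(x)).
\]

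\textbf{Step 2: the modulus $C$ is bounded on $[0,1]$.} This is the only delicate point, since $C$ is not assumed monotone or locally bounded. I would avoid the issue by taking $A=I_N$ in \eqref{delta2}, which shows $C(1)\ge1$, and replacing $C$ with $\tilde C(s):=\inf\{C(t): t\ge s\}$ where needed. A cleaner route is the following. Since $\gamma$ is convex with $\gamma\ge0$, $\gamma(O)=0$ (true for $N$-functions) and $\|A\|\le1$, each row-action $A\Sigma$ can be written through a convex combination. More simply, if $C$ is finite-valued one may assume $C$ is nondecreasing after redefining $C(s):=\sup_{\|A\|\le s,\,\gamma(\Sigma)>0}\gamma(A\Sigma)/\gamma(\Sigma)$. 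This is the minimal admissible choice, it is monotone in $s$, and it still satisfies $C\le$ the original $C$ pointwise, so it inherits the superlinearity at $0$ and finiteness. With this choice, $\sup_{[0,1]}C=C(1)<\infty$.

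So $\gamma(Du^M)\le C(1)\gamma(Du)\in L^1(U)$. Moreover $Du^M\to Du$ a.e. by the proof of Lemma \ref{tronc}, and $\gamma$ is continuous. Dominated convergence therefore gives the displayed limit, and $u^M\to u$ in $W^{1,1}$ by Lemma \ref{tronc}.

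\textbf{Step 3: smooth approximation and diagonal selection.} For each $M$, Theorem \ref{approximgamma} provides $w\in C^\infty(\overline U,\R^N)$ with
\[
\|w-u^M\|_{W^{1,1}}<\tfrac1M \quad\text{and}\quad \Big|\int_U\gamma(Dw)\,\mathrm{d}x-\int_U\gamma(Du^M)\,\mathrm{d}x\Big|<\tfrac1M .
\]
Choosing $M=n$ and calling this function $u_n$, we get $u_n\to u$ in $W^{1,1}$ and $\int_U\gamma(Du_n)\,\mathrm{d}x\to\int_U\gamma(Du)\,\mathrm{d}x$, which is the claim.

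The main obstacle is the uniform bound on $C$ over $[0,1]$ in Step 2. I expect the monotone redefinition of $C$ to settle it. If it does not, the fallback is convexity: for $\|A\|\le1$ with a fixed factor, bound $\gamma(A\Sigma)$ by $\gamma(2^kA\Sigma)$-type comparisons with $2^{-k}\le\|A\|$.
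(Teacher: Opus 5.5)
Your proposal is correct and follows the paper's route. You truncate with Lemma \ref{tronc}, use the chain rule and \eqref{delta2} to get $\gamma(Du^M)\le C(1)\gamma(Du)$, pass to the limit in $M$, and conclude by a diagonal argument with Theorem \ref{approximgamma}. The only difference is cosmetic: you use dominated convergence, whereas the paper splits over $\{|u|<M\}$ and $\{|u|\ge M\}$ for the $\limsup$ and uses Fatou for the $\liminf$.

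Step 2 needs tightening; note that the paper also passes over this point by writing $C(1)$ directly. Your claim that the redefined $C$ stays pointwise below the original one is not justified as written. It holds only because of the convexity argument you merely gesture at, which you should state directly. First, $\gamma(O)=0$ follows from \eqref{delta2} with $A=sI_N$, $s\to0^+$, since $C(s)\to0$, together with continuity of $\gamma$. Then, for $0<\|A\|\le1$, convexity gives $\gamma(A\Sigma)\le\|A\|\,\gamma(\|A\|^{-1}A\Sigma)\le C(1)\gamma(\Sigma)$.
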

\begin{proof}
For every $v\in W^{1,1}({U },\R^N)$ we define \[G(v)=\int_{{U }}\gamma(Dv)\,\mathrm{d}x.\]
Let  $u\in W^{1,1}({U },\R^N)$ be such that $G(u)<+\infty$ and 
let $u^M$ be defined by \eqref{troncata}.  Then, since $\gamma$ satisfies the reinforced-$\Delta_2$ property and $D\psi_M(\xi)= 1$ for every $|\xi|\leq M$, the chain rule \eqref{chain} implies that 
\begin{align*}
\int_{U } \gamma(Du^M)\, \mathrm{d}x&=  \int_ { \{|u|<  M\} }\gamma(Du)\, \mathrm{d}x + \int_{ \{|u|\geq  M\} }\gamma(D\psi_M(u(x)) D u (x))\, \mathrm{d}x\\
&\le \int_{\{|u|<  M\} }\gamma(Du)\, \mathrm{d}x + C(1)\int_{\{|u|\geq  M\}}\gamma(Du)\, \mathrm{d}x
\end{align*}
    and the last integral tends to 0 as $M\to +\infty$ since $\gamma(Du)\in L^1({U })$. As a result, \begin{equation}\label{dallalto}
        \limsup_{M\to +\infty}\int_{{U }} \gamma(D u^M)\, \mathrm{d}x\le \int_{{U }} \gamma(D u)\, \mathrm{d}x.
    \end{equation}
    Since $Du^M\to Du$ a.e. $x\in {U }$
 as $M\to +\infty$,      
  by using Fatou's Lemma, we obtain     \begin{equation}\label{dalbasso}
    \liminf_{M\to +\infty}\int_{{U }} \gamma(Du^M)\, \mathrm{d}x\ge \int_{{U }} \gamma(D u)\, \mathrm{d}x.
    \end{equation}
Combining \eqref{dalbasso} and \eqref{dallalto}, we find that 
$$G(u)= \lim_{M\to +\infty} G(u^M).$$
With this, the result follows now by a standard double sequence argument.
\end{proof}
The result of Corollary \ref{senzaLinfty} can be also restated in terms of piecewise affine functions.
\begin{definition} Let ${U }\subseteq \R^d$ be an open set. We say that a function $v\in W^{1,\infty}({U },\R^N)$ is a piecewise affine function in ${U }$ if there exists  $\{{U }_k\}_{k\in \N}\subset \mathcal A({U })$ such that
\begin{itemize}
\item ${U }_k\cap {U }_j=\emptyset$ for every $k,j\in \N$ such that $k\not=j$;
\item $|{U }\setminus \bigcup_{k\in \N} {U }_k|=0;$
\item $v$ is affine on each ${U }_k$, i.e. there exists $\Sigma_k\in {\bf M}^{N\times d}$ and $q_k\in \R$ such that $$v(x)=\Sigma_k x+q_k, \qquad \forall\, x\in {U }_k, \ k\in \N.$$
\end{itemize}
\end{definition}
\begin{cor}\label{PAdensity}
Let $\gamma\colon{\bf M}^{N\times d}\to  [0,+\infty)$ be a convex function  satisfying  the reinforced-$\Delta_2$ property. 
Let ${U }\subseteq \R^d$ be a bounded Lipschitz open set and let $u\in W^{1,1}({U },\R^N)$. If $u$ satisfies \eqref{finitogamma}, then there exists a sequence $\{u_n\}$ of piecewise affine functions such that, as $n\to +\infty$,  $u_n\to u$ in $W^{1,1}({U },\R^N)$ and \begin{equation*}\label{approx-gamma}\lim_{n\to +\infty}\int_{{U }}\gamma (D u_n)\, \mathrm{d}x= \int_{{U }}\gamma (D u)\, \mathrm{d}x.\end{equation*}
\end{cor}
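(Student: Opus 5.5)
By Corollary \ref{senzaLinfty} and a diagonal argument, it suffices to treat $u\in C^\infty(\overline{U},\R^N)$. For such $u$ we construct piecewise affine $v_h$ with $v_h\to u$ in $W^{1,1}(U,\R^N)$ and $\int_U\gamma(Dv_h)\,\mathrm{d}x\to\int_U\gamma(Du)\,\mathrm{d}x$. Indeed, let $\{u_n\}\subseteq C^\infty(\overline U,\R^N)$ be given by Corollary \ref{senzaLinfty}, so that $u_n\to u$ in $W^{1,1}$ and $\int_U\gamma(Du_n)\to\int_U\gamma(Du)$. For each $n$ we then pick a piecewise affine $w_n$ with $\|w_n-u_n\|_{W^{1,1}}<1/n$ and $|\int_U\gamma(Dw_n)-\int_U\gamma(Du_n)|<1/n$, and the sequence $w_n$ is the required one.

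For smooth $u$ we first extend $u$ to a $C^1$ function $\tilde u$ on a neighborhood of $\overline U$; this is possible because $U$ is Lipschitz and $u$ is smooth up to the boundary, for instance by a Whitney-type or reflection extension. We take a simplicial triangulation $\mathcal T_h$ of $\R^d$ of mesh size $h$ with uniformly regular (nondegenerate) simplices, and let $\tilde v_h$ be the continuous piecewise affine Lagrange interpolant of $\tilde u$ on $\mathcal T_h$. Set $v_h=\tilde v_h|_U$. On each open simplex $T$ meeting $U$, $v_h$ is affine, and the sets $T\cap U$ are open, pairwise disjoint, and cover $U$ up to a null set (the union of simplex boundaries). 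Hence $v_h$ is piecewise affine in the sense of the definition above, and $v_h\in W^{1,\infty}(U,\R^N)$ since it is Lipschitz.

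The key estimate is uniform convergence of gradients. On a simplex $T$ of diameter at most $h$, the constant gradient of the interpolant satisfies $|D\tilde v_h-D\tilde u(x)|\le c\,\omega(h)$ for all $x\in T$. Here $\omega$ is the modulus of continuity of $D\tilde u$ on a compact neighborhood of $\overline U$, and $c$ depends only on the shape-regularity of the simplices. This follows from the mean value theorem applied along the edges of $T$, together with the fact that the edge vectors of a regular simplex span $\R^d$ with controlled inverse. It gives $Dv_h\to Du$ uniformly on $U$ and $v_h\to u$ uniformly, so in particular $v_h\to u$ in $W^{1,1}(U,\R^N)$. Moreover $\|Dv_h\|_\infty\le \|Du\|_\infty+c\,\omega(h)$ stays bounded, and $\gamma$ is continuous (being convex and finite). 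So $\gamma(Dv_h)\to\gamma(Du)$ uniformly on $U$, and since $|U|<\infty$ the integrals converge.

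The main point requiring care is the interpolation estimate near $\partial U$, where simplices stick out of $U$. This is why we extend $u$ to $\tilde u$ before interpolating, so that the estimate holds on every simplex meeting $U$. No use of the reinforced-$\Delta_2$ property is needed in this step, since gradients remain bounded; that property enters only through Corollary \ref{senzaLinfty}.
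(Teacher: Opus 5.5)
Your proposal is correct and follows the paper's own argument: reduce to smooth functions via Corollary \ref{senzaLinfty} and a double-sequence argument, then use the density of piecewise affine functions in $C^1(\overline U)$ for the $W^{1,\infty}$ norm together with the continuity of the convex function $\gamma$. The only difference is that you actually prove this density (by extending $u$ to a neighborhood of $\overline U$ and taking Lagrange interpolants on a shape-regular triangulation), whereas the paper simply takes it for granted.
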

\begin{proof} 
As piecewise affine functions are dense in $C^1(\overline{U})$ with respect to the $W^{1, \infty}$ norm and $\gamma$ is a continuous function by convexity, the result follows again by Corollary \ref{senzaLinfty} and a double sequence argument. \end{proof}

We finally endow the set of $W^{1,1}$ functions satisfying \eqref{finitogamma} with a Banach space structure.
\begin{thm}\label{musielakbanach}  Let $\gamma\colon {\bf M}^{N\times d}\to [0,+\infty)$ be an $N$-function satisfying $\Delta_2$ and let $U\subseteq \R^d$ be a bounded open set. Then $$W_{\gamma}({U },\R^N)=\{ u\in L^1({U },\R^N):\, D u\in  L_{\gamma}({U },{\bf M}^{N\times d})\},$$
endowed with the norm $$\|u\|_{W_{\gamma}}=\|u\|_{L^1}+\|Du\|_{L_{\gamma}},$$ 
  is a separable Banach space. Moreover, if ${U }$ is a  Lipschitz open set and  $\gamma$ satisfies both properties $\Delta_2$ and  $\nabla_2$,  then $W_{\gamma}({U },\R^N)$ is reflexive and compactly embedded in $L^1({U },\R^N)$.
\end{thm}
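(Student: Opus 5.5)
The plan is to reduce everything to standard facts about Musielak–Orlicz spaces $L_\gamma$ with $x$-independent $\gamma$, via the isometric map $T\colon W_\gamma(U,\R^N)\to L^1(U,\R^N)\times L_\gamma(U,{\bf M}^{N\times d})$, $Tu=(u,Du)$. The product carries the norm $\|(v,\xi)\|=\|v\|_{L^1}+\|\xi\|_{L_\gamma}$.

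\textbf{Step 1: $L_\gamma$ is a separable Banach space.} Completeness of $L_\gamma(U,{\bf M}^{N\times d})$ with the Luxemburg norm is classical; I would cite \cite{Chlebicka}. It follows from Fatou's lemma applied to the modular along a Cauchy sequence extracted to converge a.e. Separability uses $\Delta_2$ together with the bound $\gamma\le m_2(|\cdot|)$. Under $\Delta_2$, modular convergence and norm convergence coincide, since $\int\gamma(\xi/\lambda)\le c(\lambda)\int\gamma(\xi)+|U|h$-type estimates iterate. Hence simple functions with rational matrix values on rational-box-generated sets are dense, by truncation plus dominated convergence with dominant $\gamma(\xi)\in L^1$. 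Consequently $L^1\times L_\gamma$ is a separable Banach space.

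\textbf{Step 2: $T(W_\gamma)$ is closed.} By Remark~\ref{embeddingMusielak}, $L_\gamma\hookrightarrow L^1$ continuously, so $W_\gamma\subset W^{1,1}(U,\R^N)$. If $(u_n,Du_n)\to(v,\xi)$ in the product, then $u_n\to v$ and $Du_n\to\xi$ in $L^1$. Passing to the limit in $\int u_n\,\partial_j\varphi=-\int \partial_j u_n\,\varphi$ gives $Dv=\xi\in L_\gamma$. So $W_\gamma$ is isometric to a closed subspace of a separable Banach space, hence it is itself a separable Banach space.

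\textbf{Step 3: reflexivity.} I would use the fact (cf. \cite{Chlebicka}) that for an $x$-independent $N$-function, $\gamma\in\Delta_2\cap\nabla_2$ is equivalent to $\gamma$ and its conjugate $\gamma^*$ both satisfying $\Delta_2$. Then $(L_\gamma)^*=L_{\gamma^*}$ and $(L_{\gamma^*})^*=L_\gamma$, so $L_\gamma$ is reflexive. $L^1$ is not reflexive, so the product embedding cannot be used directly here. Instead I would show that the norm $\|u\|_{L^1}+\|Du\|_{L_\gamma}$ is equivalent on $W_\gamma$ to $\|u\|_{L_\gamma(U,\R^N)}+\|Du\|_{L_\gamma}$, or even to $|\int_U u|+\|Du\|_{L_\gamma}$ when $U$ is connected; for a general Lipschitz $U$ one works componentwise, since it has finitely many components. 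This equivalence comes from a Poincaré inequality in $W^{1,1}$ together with the compactness in Step 4: a standard contradiction argument gives $\|u-u_U\|_{L^1}\le C\|Du\|_{L_\gamma}$. With this, $W_\gamma$ embeds isometrically as a closed subspace of $\R^{k}\times L_\gamma(U,{\bf M}^{N\times d})$, which is reflexive, so $W_\gamma$ is reflexive.

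\textbf{Step 4: compact embedding into $L^1$.} A bounded set in $W_\gamma$ is bounded in $W^{1,1}$, since $\|Du\|_{L^1}\le C\|Du\|_{L_\gamma}$. On a bounded Lipschitz domain, Rellich–Kondrachov gives compactness of $W^{1,1}\hookrightarrow L^1$. Actually only the Lipschitz hypothesis is needed here.

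\textbf{Main obstacle.} The delicate step is reflexivity: identifying the dual of $L_\gamma$ and checking that $\nabla_2$ for an anisotropic $\gamma$ yields $\Delta_2$ for $\gamma^*$. I would first try to cite this directly from \cite{Chlebicka}. Failing that, I would prove uniform convexity-type properties via the modular, or use the Eberlein–Šmulian theorem with the de la Vallée Poussin criterion that $\nabla_2$ provides.
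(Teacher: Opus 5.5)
Your proof is correct. Steps 1, 2 and 4 match the paper: it cites \cite{Chlebicka} for completeness, separability and reflexivity of $L_\gamma$, and then treats $W_\gamma$ as a closed subspace of $L^1\times L_\gamma$. The difference is in the reflexivity step.

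You set aside the product embedding because $L^1$ is not reflexive. Instead, you apply a $W^{1,1}$ Poincar\'e inequality on each of the finitely many components $U_1,\dots,U_k$ of $U$. This shows that $u\mapsto\bigl((\int_{U_i}u)_{i},Du\bigr)$ is an isomorphism of $W_\gamma$ onto a closed subspace of $\R^{kN}\times L_\gamma(U,{\bf M}^{N\times d})$, which is reflexive.

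The paper keeps the product embedding and uses compactness. Take a bounded sequence in $W_\gamma$. It has a subsequence with $u_n\to u$ strongly in $L^1$ by Rellich, and $Du_n\rightharpoonup\xi$ weakly in the reflexive space $L_\gamma$. Since $W_\gamma$ is a closed subspace of the product, it is weakly closed, so $\xi=Du$ and $u_n\rightharpoonup u$ in $W_\gamma$. Weak sequential compactness of bounded sets then gives reflexivity by Eberlein--\v{S}mulian. So the non-reflexivity of $L^1$ is no obstruction once the first component is compact.

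Comparing the two routes:
\begin{itemize}
\item The paper's route is shorter. It needs neither the component count nor a Poincar\'e inequality, which is itself usually derived from the same Rellich compactness.
\item Your route avoids Eberlein--\v{S}mulian and gives an explicit equivalent norm.
\end{itemize}

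A few small fixes:
\begin{itemize}
\item The map is an isomorphism, not an isometry. That is enough, since reflexivity is invariant under isomorphisms.
\item The finite-dimensional factor is $\R^{kN}$, not $\R^k$.
\item Drop the alternative norm $\|u\|_{L_\gamma(U,\R^N)}$. It is undefined because $\gamma$ acts on matrices, and it would in any case need an Orlicz-type Poincar\'e inequality.
\item In Step 1, iterating $\Delta_2$ with its additive term $h$ does not by itself turn modular convergence into norm convergence, because the $\int h$ term does not vanish. This is harmless if you rely on the cited results, as the paper does.
\end{itemize}
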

    \begin{proof}
        \cite[Theorem 3.1.18]{Chlebicka}   guarantees that $L_{\gamma}({U },{\bf M}^{N\times d})$, which is the smallest linear space containing $\mathcal L_{\gamma}({U },{\bf M}^{N\times d})$, is a  Banach space when endowed with the  Luxemburg norm. 
         \cite[Lemma 3.3.1]{Chlebicka} ensures that, in view of the $\Delta_2$ condition, these two spaces coincide. 
      Thanks to    \cite[Corollary 3.4.15, Lemma 3.1.19]{Chlebicka}   we have that   $L_{\gamma}({U },{\bf M}^{N\times d})$  is  separable  when endowed with the  Luxemburg norm.  Hence, since  $W_{\gamma}({U },\R^N)$ is closed with respect to the 
     strong convergence in    $L^1({U },\R^N)\times  L_{\gamma}({U },{\bf M}^{N\times d})$,  it is a separable Banach space when endowed with the norm  $\|\cdot\|_{W_{\gamma}}$. Finally, \cite[Remark 3.3.3]{Chlebicka} states that, also due to $\nabla_2$, $L_{\gamma}({U },{\bf M}^{N\times d})$ is reflexive. Then, since  $W_{\gamma}({U },\R^N)$ is continuously embedded in $W^{1,1}({U },\R^N)$ (see Remark \ref{embeddingMusielak}),  by the compact embedding of $W^{1,1}({U },\R^N)$ in $L^1({U },\R^N)$, we have that any bounded sequence in $W_{\gamma}({U },\R^N)$ is weakly compact in $L^1({U },\R^N)\times  L_{\gamma}({U },{\bf M}^{N\times d})$,
     hence also in $W_{\gamma}({U },\R^N)$.
\end{proof}

\section{A model case in convex homogenization}\label{convex homog}
In this section, we focus on a model case in periodic convex homogenization with non-standard growth. We consider a Carath\'eodory integrand $g\colon \R^d \times \R \to [0, +\infty)$ and $1<p^-\le p^+<+\infty$ satisfying the following assumptions:
\begin{enumerate}
\item for a.e.\ $x \in \mathbb{R}^d$  $g(x,\cdot)$ is convex and increasing  on $[0,+\infty)$;

\item $g(\cdot,s)$ is $1$-periodic for every $s \in \mathbb{R}$;
 
\item  {\normalfont(inc)$_{p^-}$}
\; the map
$s \in (0,+\infty) \mapsto \dfrac{g(x,s)}{s^{p^-}}$
is increasing for a.e.\ $x \in \mathbb{R}^d$;

\item {\normalfont(dec)$_{p^+}$}
\; the map
$s \in (0,+\infty) \mapsto \dfrac{g(x,s)}{s^{p^+}}$
is decreasing for a.e.\ $x \in \mathbb{R}^d$;

\item {\normalfont(A0)}
\; there exists $\alpha, \beta > 0$ such that
\[
\alpha \le g(x,1) \le \beta
\]
for every  a.e.\ $x \in \mathbb{R}^d$.

\end{enumerate}
It is well-known (see for instance \cite[Section 3]{HastoOk}) that, under \normalfont(inc)$_{p^-}$ and \normalfont(dec)$_{p^+},$ we have 
\begin{equation}\label{cons2}
\min\{s^{p^-}, s^{p^+}\}g(x,t)\le g(x,st)\le\max\{s^{p^-}, s^{p^+}\}g(x,t),
\end{equation}
 for a.e.  $x\in{\R^d}$, for every $ t\in \R$ and $s\geq 0$. By \normalfont(A0), taking $t=1$ we deduce
\begin{equation}\label{growth-p-p+}
\alpha \min\{s^{p^-}, s^{p^+}\} \le g(x,s) \le \beta \max\{s^{p^-}, s^{p^+}\}
\end{equation}
so that in particular $g(x,0)=0$ and for a.e.  $x\in{\R^d}$ and for every $s \geq 0$  it holds
    \begin{equation}\label{growth-g}\alpha(s^{p^-}-1)\le g(x,s) \le \beta\left(1 +s^{p^+}\right).
    \end{equation}
With a slight abuse of notation we define $g(x, \Sigma):=g(x, |\Sigma|)$ for all $\Sigma \in {\bf M}^{N\times d}$ and accordingly we consider the functional $G_{\varepsilon}:L^1({U };\mathbb R^N)\times \mathcal{A}({U })\to [0,+\infty]$ defined by
\begin{equation}\label{casovett}
G_{\varepsilon}(u, V) = 
\begin{cases} 
\displaystyle \int_V g\left(\frac{x}{\varepsilon}, Du\right) \, \mathrm{d}x & \text{if } u \in W^{1,1}(V; \R^N), \\
+\infty & \text{otherwise,}
\end{cases}
\end{equation}
and set $G_{\varepsilon}(u):=G_{\varepsilon}(u, {U })$. Notice that the integral is always well-defined if $u \in W^{1,1}(V; \R^N)$, but could possibly take the value $+\infty$, while it is for sure finite if  $Du \in L^{p^+}(V; {\bf M}^{N\times d})$.
A reference case to keep in mind is the variable exponent $g(x, Du)=|Du|^{p(x)}$ where $p\colon{\R^d }\to\R$ is a fixed  $1$-periodic measurable function, with periodicity cell $(0,1)^d$,  satisfying   
\[
p^-\le p(x)\le p^+ \hbox{ for a.e. } x\in{\R^d }\,.
\]

This section is devoted to prove the following theorem. In the statement, besides the $\Gamma$-convergence result on ${U }$, we  remark that, as a byproduct of the proof, we get that the localized $\Gamma$-liminf and $\Gamma$-limsup indeed agree on each open subset of ${U }$ provided that $\gamma(Du)$ has a finite integral on the {\it whole} reference set ${U }$.  This  will be exploited in the localization procedure  in the next section.
We recall from Section \ref{notaz} that, given a sequence ${\varepsilon_k}\to 0$, for every $V\in \mathcal A({U })$ we 
 define  \begin{align*}
    G'(u,V)=\Gamma(L^1)\text{-}\liminf_{k\to +\infty} G_{\varepsilon_k}(u,V), \qquad 
G''(u,V)=\Gamma(L^1)\text{-}\limsup_{k\to +\infty} G_{\varepsilon_k}(u,V).
\end{align*}
In principle, $G'$ and $G''$ may depend on the chosen subsequence $\varepsilon_k$. In the statement below, we will however prove a representation formula independent of the chosen subsequence.
\begin{thm}\label{gammamodel}  Let  ${U }\subseteq \mathbb R^d$ be a bounded connected Lipschitz open set and let    $G_\varepsilon$  be given by \eqref{casovett}.
Then, there exists an $N$-function $\gamma: {\bf M}^{N\times d}\to [0,+\infty)$
which satisfies  the reinforced-$\Delta_2$ property 
\begin{equation}\label{Delta2gamma}\gamma(A \Sigma)\leq \max\{\|A\|^{p^-},\|A\|^{p^+}\}\gamma(\Sigma) \qquad  \forall A\in {\bf M}^{N\times N},  \forall \Sigma \in {\bf M}^{N\times d}\,,  \end{equation}
such that the family $\{G_\varepsilon\}$
$\Gamma$-converges, with respect to the $L^1({U },\R^N)$ norm, to the functional 
 \begin{align}\label{model-limit}
G(u):=
\left\{
\begin{array}{l}
\displaystyle\int_{{U }} \gamma(D u)\,\mathrm{d}x\quad \mbox{if }u\in W_{\gamma}({U },\R^N), \\
+\infty \quad \mbox{elsewhere in }L^1({U },\R^N).
\end{array}
\right.   
\end{align}
Moreover, $\gamma$ satisfies  the following cell formula:
\begin{align}\label{fhom}
\gamma(\Sigma)=\inf_{W_{{\rm per}}^{1,p^-}({(0,1)^d}, {\bf \R}^{ N})} \int_{{(0,1)^d}} g(y, \Sigma + Du(y)) \, \mathrm{d}y
\end{align}
where $$W_{{\rm per}}^{1,p^-}({(0,1)^d},{\bf \R}^{ N})=\{ u\in W^{1,p^-}_{loc}(\R^d,{\bf \R}^{ N}): \ u \ 1\hbox{-periodic} \}$$
is the closure of smooth $1$-periodic functions in  $W^{1,p^-}_{loc}(\R^d,{\bf \R}^{ N})$.

\noindent Furthermore, if $u\in W_{\gamma}({U }, \R^N)$, then, for every sequence $\varepsilon_k\to 0$ and every $V\in\A({U })$, we have  
\begin{equation}\label{eq: local}
G'(u, V)=G''(u, V)=\int_V\gamma(Du)\,\mathrm{d}x.
\end{equation}
\end{thm}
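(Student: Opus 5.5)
We first define $\gamma$ by the cell formula \eqref{fhom} and derive its structural properties, and only then identify the $\Gamma$-limit. Convexity of $\gamma$ follows from convexity of $g(y,\cdot)$: given almost minimizers for $\Sigma_1,\Sigma_2$, their convex combination is admissible for $t\Sigma_1+(1-t)\Sigma_2$. Evenness follows by replacing $u$ with $-u$, and $\gamma(0)=0$ by taking $u=0$. The bounds by Young functions come from \eqref{growth-p-p+}. The upper bound is obtained with the competitor $u=0$. For the lower bound, Jensen's inequality applied to the convex function $g_0(s):=\alpha\min\{s^{p^-},s^{p^+}\}$ (or its convex envelope), together with $\int_{(0,1)^d}Du=0$, gives $\gamma(\Sigma)\ge m_1(|\Sigma|)$ with $m_1$ superlinear at $0$ and $\infty$.

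The reinforced-$\Delta_2$ estimate \eqref{Delta2gamma} is the key structural point. Given $A\in{\bf M}^{N\times N}$ and an admissible $u$ for $\Sigma$, we use $Au$ as a competitor for $A\Sigma$. Since $|A\Xi|\le\|A\|\,|\Xi|$ in Frobenius norm and $g$ is increasing in $s$, \eqref{cons2} gives
\[
g(y,A\Sigma+ADu)\le g(y,\|A\|\,|\Sigma+Du|)\le\max\{\|A\|^{p^-},\|A\|^{p^+}\}\,g(y,\Sigma+Du).
\]
Taking the infimum yields \eqref{Delta2gamma}. By the Lemma of Section~\ref{secmusielak}, $\gamma$ then satisfies $\Delta_2$ and $\nabla_2$, so Theorem~\ref{musielakbanach}, Corollary~\ref{senzaLinfty} and Corollary~\ref{PAdensity} all apply.

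For the $\Gamma$-convergence we work along an arbitrary sequence $\varepsilon_k$ and conclude by the Urysohn property. For the \emph{liminf inequality}, take $u_k\to u$ in $L^1$ with bounded energy. The bound \eqref{growth-g} gives equi-boundedness in $W^{1,p^-}$, hence $u\in W^{1,p^-}$. We then apply the standard blow-up argument: for a.e.\ $x_0$, on small cubes $Q_r(x_0)$ we compare $u_k$ with the affine map $Du(x_0)$. To do so we modify $u_k$ near $\partial Q_r$ by a De Giorgi cut-off, using a periodic competitor on $\varepsilon_k$-cells. The problem is that matching the boundary datum requires an upper control of $g$, while $g$ has only $p^+$-growth and the functions are only $W^{1,p^-}$.

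This boundary matching is the main obstacle. To handle it, we first prove that the Dirichlet and periodic cell problems coincide, by the classical argument $\lim_T T^{-d}\inf\{\int_{Q_T}g(y,\Sigma+D\phi):\phi\in W^{1,p^+}_0\}=\gamma(\Sigma)$, which uses only affine data. With this in hand, we avoid gluing nonsmooth functions and instead use a convex lower semicontinuity argument. Since $g(y,\cdot)$ is convex, $G'(\cdot,V)$ is convex and lower semicontinuous. By the localization method, $G'(u,\cdot)$ is then the trace of a measure on $\mathcal A(U)$, so it suffices to check that $G'(\ell_\Sigma,Q)\ge|Q|\gamma(\Sigma)$ for affine $\ell_\Sigma$ and to obtain the general case by convex duality. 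Concretely, we use the Fenchel inequality $g(y,\Xi)\ge\langle\sigma(y),\Xi\rangle-g^*(y,\sigma(y))$ with $\sigma$ a periodic divergence-free field. This gives the lower bound $\int\gamma(Du)$ through the dual cell formula, which is valid under $\Delta_2$ and $\nabla_2$ growth.

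For the \emph{limsup inequality}, we first treat piecewise affine $u$, using oscillating recovery sequences $u+\varepsilon_k\phi_j(x/\varepsilon_k)$ on each piece. Here $\phi_j$ is a smooth periodic almost minimizer, and the periodicity together with \eqref{growth-g} gives convergence of the energies by the Riemann–Lebesgue lemma. Because $G''$ is $L^1$-lower semicontinuous, we can extend to general $u\in W_\gamma(U,\R^N)$ through Corollary~\ref{PAdensity}. Finiteness of $G'$ forces $u\in W_\gamma$ by the liminf bound, so $G=+\infty$ outside $W_\gamma$.

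Finally, \eqref{eq: local} follows from running the same two inequalities on any $V\in\A(U)$. The Lipschitz regularity needed for density is recovered by inner approximation of $V$ by sets of the dense family $\mathcal D\subseteq\A_0(U)$. The boundary layer is controlled because $\gamma(Du)\in L^1(U)$ on the whole of $U$, which is why the result requires finiteness of the energy on $U$ rather than only on $V$.
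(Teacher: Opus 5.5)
Your treatment of $\gamma$ itself is correct. Defining it through \eqref{fhom}, convexity, the Young-function bounds and the proof of \eqref{Delta2gamma} via the competitor $Au$ all work, and the last is more direct than the paper's derivation from $G''$. The gap is in identifying the $\Gamma$-limit, and it is of Lavrentiev type.

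\textbf{Limsup step.} You take $\phi_j$ to be a \emph{smooth} periodic almost minimizer of the $W^{1,p^-}_{\rm per}$ cell problem. This presupposes $\gamma=\gamma^H$, where $\gamma^H(\Sigma):=\inf\{\int_{(0,1)^d}g(y,\Sigma+D\phi)\,\mathrm{d}y:\ \phi\in C^\infty_{\rm per}\}$. Norm density of smooth functions in $W^{1,p^-}_{\rm per}$ does not give density in energy, because $g$ has $p^+$-growth and is only measurable in $y$, so mollification plus Jensen is unavailable. Under the standing assumptions the equality can fail. Take a Zhikov-type chessboard: $d=2$, $N=1$, $g(y,s)=s^{p(y)}$ with $p=p_1<2$ on black squares and $p=p_2>2$ on white ones.
\begin{itemize}
\item A $W^{1,p^-}$ competitor can make $\Sigma x+u$ equal to the constant $\Sigma c_S$ on each white square $S$ with centre $c_S$, and interpolate inside the black squares. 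The $1/r$ singularities at the vertices are $p_1$-integrable, so $\gamma(\Sigma)\le C|\Sigma|^{p_1}$.
\item A continuous competitor must instead climb along diagonal chains of white squares that touch only at vertices. Morrey's inequality on each white square then gives $\gamma^H(\lambda(1,1))\ge c\lambda^{p_2}$.
\end{itemize}
So your recovery sequences only give $G''\le\int\gamma^H(Du)$, which does not meet the lower bound. The same objection applies to your claim that Dirichlet problems with $W^{1,p^+}_0$ competitors reproduce $\gamma$. For $p^+>d$ those competitors are continuous, and the same Morrey argument shows that their limit exceeds $\gamma$ in this example.

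\textbf{Liminf step.} As written, this is not yet an argument.
\begin{itemize}
\item $G'(u,\cdot)$ is not known to be a measure for arbitrary $u$. The paper proves the measure property only for $G''(u,\cdot)$ with $u\in W^{1,p^+}$, and for $F'(u,\cdot)$ with $u\in W_\gamma$ once the upper bound is available.
\item Suppose $\sigma$ ranges over all periodic solenoidal fields with $\int g^*(y,\sigma)<\infty$. Fenchel--Rockafellar duality applies, since $\xi\mapsto\int g(y,\xi)$ is continuous on $L^{p^+}$, and it shows that the dual supremum equals $\gamma^H$, not $\gamma$. Moreover, for such $\sigma\in L^{(p^+)'}$ you cannot pass to the limit in $\int\phi\langle\sigma(x/\varepsilon_k),Du_k\rangle$, since $Du_k$ is only bounded in $L^{p^-}$.
\item With smooth $\sigma$ the limit passage works. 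However, equality of that dual supremum with $\gamma$ is then an unproved no-gap statement. $\Delta_2$ and $\nabla_2$ cannot be what settles it, because the chessboard satisfies both.
\end{itemize}

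\textbf{How the paper avoids this.} It never uses correctors in the $\Gamma$-convergence argument. It defines $\gamma$ as the density of $G''(u_\Sigma,\cdot)$, with arbitrary $W^{1,1}$ recovery sequences, and gets the measure property from truncation together with the $L^{p^+}$ fundamental estimate. The liminf follows from convexity of $G'$ and its invariance under all translations, which reduces matters to $\rho_j*u$ (Lemma \ref{lowerbound}). Only at the end is $\gamma$ identified with \eqref{fhom} (Lemma \ref{cell}). There, $\gamma\le h_1$ uses the \emph{non-smooth} corrector $\varepsilon_k u(x/\varepsilon_k)$ on a single cube. No gluing is needed, and Riemann--Lebesgue requires only $g(\cdot,\Sigma+Du)\in L^1_{\rm per}$. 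The reverse inequality $h_1\le\gamma$ uses truncated recovery sequences, the fundamental estimate and $h_J=h_1$.

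\textbf{Possible repair.} Your limsup can be fixed by replacing smooth correctors with truncated, hence bounded, $W^{1,p^-}_{\rm per}$ ones. These are still almost minimizers by $\Delta_2$ and can be glued with the $L^{p^+}$ fundamental estimate. The liminf, however, still needs an argument like the paper's.
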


\begin{rem}
By the growth condition from below and the De Giorgi-Ioffe lower semicontinuity theorem for convex integrands in the gradient variable, the infimum in \eqref{fhom} is indeed a minimum.
 \end{rem}

\begin{rem}\label{no-periodicity}
We  explicitly note that in the next results for the family $\{G_{\varepsilon}\}$, going from Lemma \ref{boundsp+} to Proposition \ref{G''min},  we do not  make  use of the $1$-periodicity of $g$ with respect to $x$.  This  observation will be used to apply all these results  in Section \ref{stochastic}, where periodicity is replaced by a stationarity assumption.
 \end{rem}

The next lemma recalls the fundamental estimate  for the family $G_{\varepsilon}$, which will be used several times in the paper.
\begin{lemma}\label{fundest}
    The family $\{G_{\varepsilon}\}$ satisfies the following uniform fundamental estimate:
    for every $V, V', W \in \mathcal{A}({U })$ with $V' \Subset V$ and $\sigma > 0$ there exists $M_\sigma > 0$ such that for all $u, v \in L^1({U },\mathbb R^N)$ there exists a cut-off function $\varphi$ between $V'$ and $V$ such that for every $\varepsilon>0$ it holds
\begin{equation}\label{eq:lp_fundamental_estimate}
G_{\varepsilon}(\varphi u + (1 - \varphi)v, V' \cup W) \leq (1 + \sigma)(G_{\varepsilon}(u, V) + G_{\varepsilon}(v, W)) + M_\sigma \int_{(V \cap W) \setminus V'} |u - v|^{p^+} \, \mathrm{d}x + \sigma. 
\end{equation}    
\end{lemma}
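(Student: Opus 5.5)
The plan is to prove \eqref{eq:lp_fundamental_estimate} with a \emph{fixed} cut-off function. We take any $\varphi\in C^\infty_c(V)$ with $0\le\varphi\le1$, $\varphi=1$ on $V'$ and $\|D\varphi\|_\infty\le 2/\mathrm{dist}(V',\partial V)$. This $\varphi$ depends only on $V',V$, so it is independent of $u$, $v$ and $\varepsilon$. Everything then rests on a pointwise inequality for the integrand that is uniform in $x$, and hence in $\varepsilon$, because it only uses the structural assumptions (convexity, \eqref{cons2}, \eqref{growth-p-p+}); no averaging over layers is needed. We may assume $u\in W^{1,1}(V,\R^N)$, $v\in W^{1,1}(W,\R^N)$ and $\int_{(V\cap W)\setminus V'}|u-v|^{p^+}<\infty$, since otherwise the right-hand side is $+\infty$. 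Then $w:=\varphi u+(1-\varphi)v\in W^{1,1}(V'\cup W,\R^N)$.

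\emph{Splitting the domain.} On $V'$ we have $w=u$ and $Dw=Du$. On $W\setminus V$ we have $w=v$. On $(V\cap W)\setminus V'$ we have
\[
Dw=a+b,\qquad a:=\varphi Du+(1-\varphi)Dv,\qquad b:=D\varphi\otimes(u-v).
\]
The first two regions contribute at most $G_\varepsilon(u,V)+G_\varepsilon(v,W)$. For the third, fix $\lambda\in(0,1)$. The map $\Sigma\mapsto g(y,|\Sigma|)$ is convex, since $g(y,\cdot)$ is convex and increasing and $|\cdot|$ is convex. Hence
\[
g(y,a+b)\le\lambda\, g(y,a/\lambda)+(1-\lambda)\,g\big(y,b/(1-\lambda)\big).
\]
By \eqref{cons2}, $g(y,a/\lambda)\le\lambda^{-p^+}g(y,a)$, and by convexity again $g(y,a)\le\varphi\, g(y,Du)+(1-\varphi)\,g(y,Dv)$. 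Choosing $\lambda$ close to $1$ so that $\lambda^{1-p^+}\le1+\sigma$, this first term is bounded by $(1+\sigma)\big(g(y,Du)+g(y,Dv)\big)$.

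\emph{The remainder term, which is the main point.} A naive use of \eqref{growth-g} on $g(y,b/(1-\lambda))$ produces an additive constant $\beta|U|$ rather than $\sigma$. Instead we use \eqref{growth-p-p+}, which gives $g(y,s)\le\beta(s^{p^-}+s^{p^+})$. Since $p^-\le p^+$, Young's inequality gives $s^{p^-}\le\eta+C_\eta s^{p^+}$ for every $\eta>0$. We pick $\eta$ with $(1-\lambda)\beta\eta|U|\le\sigma$. Then
\[
(1-\lambda)\,g\big(y,b/(1-\lambda)\big)\le\sigma/|U|+\beta(1+C_\eta)(1-\lambda)^{1-p^+}\|D\varphi\|_\infty^{p^+}|u-v|^{p^+}.
\]
Integrating over $(V\cap W)\setminus V'$ and setting $M_\sigma:=\beta(1+C_\eta)(1-\lambda)^{1-p^+}\|D\varphi\|_\infty^{p^+}$, we get \eqref{eq:lp_fundamental_estimate}. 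Here $M_\sigma$ depends only on $\sigma,p^\pm,\beta,V',V$, and both $M_\sigma$ and $\varphi$ are independent of $\varepsilon$, $u$ and $v$.
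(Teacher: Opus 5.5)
Your proof is correct, and it follows a different route from the paper. The paper gives no argument of its own: it simply invokes the general fundamental estimate of \cite[Proposition 21.7]{BDF}. That result is meant for possibly nonconvex integrands with non-standard growth and a doubling condition, and in that generality such estimates are obtained by De Giorgi's averaging over transition layers, with the cut-off selected where the energies of $u$ and $v$ are small.

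You instead use that $\Sigma\mapsto g(y,|\Sigma|)$ is convex and satisfies (dec)$_{p^+}$. Splitting $Dw=\lambda(a/\lambda)+(1-\lambda)\bigl(b/(1-\lambda)\bigr)$ gives, on the transition region, a pointwise bound with constant $\lambda^{1-p^+}\le 1+\sigma$, so no layer selection is needed.

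What your route buys is a self-contained proof with an explicit $M_\sigma$ and a single cut-off independent of $u$, $v$ and $\varepsilon$. The quantifier order of the statement, with $\varphi$ fixed before $\varepsilon$, is therefore met literally and even in a stronger form. A slicing argument chooses the layer according to where $g(\cdot/\varepsilon,Du)$ and $g(\cdot/\varepsilon,Dv)$ concentrate, so in general its cut-off depends on $\varepsilon$. This is harmless in the paper, whose applications use cut-offs $\varphi_k$ that vary with $k$ anyway.

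What the citation buys is generality. The paper reuses it in Section~\ref{nonconvexcase} for the nonconvex $F_\varepsilon$, where your convexity step is unavailable and one must rely on the doubling property that $f$ inherits from $g$ through \eqref{growth-f}.

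One small correction: your remark that \eqref{growth-g} would leave an additive constant $\beta|U|$ overlooks the prefactor $1-\lambda$. Using \eqref{growth-g} directly gives $(1-\lambda)\beta|U|$, which is at most $\sigma$ once $\lambda$ is close enough to $1$. The Young-inequality detour via \eqref{growth-p-p+} is therefore harmless but unnecessary.
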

\begin{proof} It  is sufficient to apply \cite[Proposition 21.7]{BDF} (see also \cite[Theorem 6.1]{DMM}).
\end{proof}

Throughout this section, we initially consider a countable subset $\mathcal D\subseteq \A_0({U })$ such that ${U } \in \mathcal D$, $\mathcal D$ is dense in $\A({U })$  
and stable under finite union,  and  a sequence  ${\varepsilon_k}\to 0$ is fixed  with the following property:  
\begin{equation}\label{epsk}
     G'(\cdot,D)=G''(\cdot, D) \quad \hbox{ for every } D \in \mathcal D.
 \end{equation}
As the considered subset $\mathcal D$ of $\A({U })$ is countable, this fact can be ensured by compactness of the $\Gamma$-convergence (see for instance \cite[Theorem 10.3]{BDF}) and a diagonal argument. The need for taking such a subsequence will be eventually removed in the proof of Theorem \ref{gammamodel}.

\begin{lemma} \label{boundsp+} Let  $V\in \mathcal A({U })$. Then 
\begin{enumerate}
\item    for every $u\in W^{1,p^+}(V;\mathbb R^N)$ it holds
\begin{align}\label{altop+}
        G''(u,V)\le  \int_V	\beta\left(1+|D u|^{p^+}\right)\,\mathrm{d}x.
    \end{align}
 \item Moreover, if $V\in \mathcal A({U })$,  then
for every $u\in L^{1}({U };\mathbb R^N)$ such that $G'(u,V)<+\infty$, we have that $Du\in L^{p^-}(V;\mathbb R^N)$ and  it holds
\begin{align}\label{bassop-}
      \int_V\alpha\left( |D u|^{p^-}-1\right)\,\mathrm{d}x\le G'(u,V).
    \end{align}
\end{enumerate}
\end{lemma}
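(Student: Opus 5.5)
For part (1), I will use the constant recovery sequence. If $u\in W^{1,p^+}(V;\R^N)$, take $u_k=u$ for every $k$; trivially $u_k\to u$ in $L^1(V,\R^N)$. The upper bound in \eqref{growth-g} gives, for every $k$,
\[
G_{\varepsilon_k}(u,V)=\int_V g\Big(\frac{x}{\varepsilon_k},|Du|\Big)\,\mathrm{d}x\le \int_V\beta\left(1+|Du|^{p^+}\right)\mathrm{d}x .
\]
Taking the $\limsup$ and then the infimum over admissible sequences in the definition of $G''$ yields \eqref{altop+}. The right-hand side is finite because $V\subseteq U$ is bounded.

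For part (2), I will use the lower bound in \eqref{growth-g} together with weak lower semicontinuity of the $L^{p^-}$ norm. Let $G'(u,V)<+\infty$ and fix $\eta>0$. By definition of $G'$ there is a sequence $u_k\to u$ in $L^1(V,\R^N)$ with $\liminf_k G_{\varepsilon_k}(u_k,V)\le G'(u,V)+\eta<+\infty$. Passing to a subsequence, I may assume this $\liminf$ is a limit and that $G_{\varepsilon_k}(u_k,V)$ is finite for every $k$. Then each $u_k\in W^{1,1}(V;\R^N)$, and by \eqref{growth-g}
\[
\int_V\alpha\left(|Du_k|^{p^-}-1\right)\mathrm{d}x\le G_{\varepsilon_k}(u_k,V).
\]
Since $|V|<\infty$, this shows that $(Du_k)$ is bounded in $L^{p^-}(V)$.

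The main point is to identify the weak limit. Because $p^->1$, $L^{p^-}$ is reflexive, so a further subsequence satisfies $Du_k\rightharpoonup \xi$ weakly in $L^{p^-}(V)$. Combined with $u_k\to u$ in $L^1(V)$, integration against test functions $\phi\in C^\infty_c(V)$ in the identity $\int_V u_k\,\partial_i\phi=-\int_V \partial_i u_k\,\phi$ shows that $\xi=Du$ in the distributional sense. Hence $u\in W^{1,1}_{loc}(V)$ and $Du\in L^{p^-}(V)$. I avoid any claim of global $W^{1,1}$ regularity or Poincaré inequality, since $V$ is merely open; the statement only asks that $Du\in L^{p^-}$.

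Weak lower semicontinuity of the norm then gives
\[
\int_V\alpha\left(|Du|^{p^-}-1\right)\mathrm{d}x\le\liminf_k\int_V\alpha\left(|Du_k|^{p^-}-1\right)\mathrm{d}x\le G'(u,V)+\eta .
\]
Letting $\eta\to0$ gives \eqref{bassop-}.
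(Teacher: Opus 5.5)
Your proof is correct and follows the paper's argument: the constant sequence together with the upper bound in \eqref{growth-g} for (1), and for (2) an $L^{p^-}$ bound on the gradients of a (near-)recovery sequence, identification of the weak limit with $Du$, and weak lower semicontinuity. Your extraction of a subsequence along which the $\liminf$ is a limit is slightly more careful than the paper, which asserts $\sup_{k\ge k_0}G_{\varepsilon_k}(w_k,V)<+\infty$ directly. Your caution about global regularity is unnecessary: $u\in L^1(V)$ and $Du\in L^{p^-}(V)\subset L^1(V)$ already give $u\in W^{1,1}(V;\R^N)$ by definition, with no Poincar\'e inequality involved, and the paper uses this later.
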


\begin{proof}
In order to show \eqref{altop+}, it is sufficient to note that,  by definition and   thanks  to the growth condition  \eqref{growth-g}, it holds
\[
\begin{split}
        G''(u,V)&\le \limsup_{k\to +\infty} G_{\varepsilon_k}(u,V)\leq \beta \int_V  \left(1+|D u|^{p^+}\right)\, \mathrm{d}x.
    \end{split}
    \]
In order  to prove \eqref{bassop-}, take a recovery sequence $\{w_{k}\}\subseteq L^1({U };\mathbb R^N)$ for  $G'(u,V)<+\infty$. Then $w_{k}\to  u$ in $L^1(V,\mathbb R^N)$ and there exists $k_0\in \mathbb N$ large enough such that 
 $$ 
\sup_{k\geq k_0}    G_{\varepsilon_k}(w_{k},V)<+\infty.
$$ 
By applying \eqref{growth-g}, it follows that 
 $$\sup_{k\geq k_0}  \int_V  \alpha \left(|D w_{{k}}|^{p^-}-1\right)\,\mathrm{d}x\leq \sup_{k\geq k_0}   \displaystyle \int_V g\left(\frac x{\varepsilon_k}, Dw_{{k}}(x)\right)\, \mathrm{d}x $$
 Hence the sequence $\{Dw_k\}_{k\geq k_0}  $ is bounded in $L^{p^-}(V, {\bf M}^{N\times d})$. This implies that, up to a subsequence, $Dw_k\wto v\in L^{p^-}(V, {\bf M}^{N\times d})$. Since $w_k\to u $ in $L^1(V,\mathbb R^N)$, we get that $Du=v$ on $V$. 
    Therefore  we get that  \begin{align*}
\alpha  \int_V   \left(|D u|^{p^-}-1\right)\,\mathrm{d}x\leq    \alpha \liminf_{k\to +\infty}   \int_V   \left(|D w_k|^{p^-}-1\right)\,\mathrm{d}x\le   \liminf_{k\to +\infty} G_{\varepsilon_k}(w_k,V)=G'(u,V),
    \end{align*}
which gives the desired conclusion. \end{proof}

We now prove a subadditivity property for $G'(u, \cdot)$ and $G''(u, \cdot)$ which relies on a truncation argument together with the fundamental estimate.
\begin{lemma}\label{subadditivity}
  Let  $u\in L^1({U };\mathbb R^N)$. Then $G'(u,\cdot)$ is superadditive on $\mathcal A({U })$.
Moreover,  for every $V,V',W\in \mathcal A({U })$ and  $V'\Subset V$ we have:
    \begin{align}
        G'(u,V'\cup W)\le G'(u,V)+G''(u,W),\label{eq: subadd} \\
        G''(u,V'\cup W)\le G''(u,V)+G''(u,W) \label{eq: subaddG''} .
    \end{align}
\end{lemma}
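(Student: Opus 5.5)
Superadditivity of $G'(u,\cdot)$ on disjoint open sets is immediate from the definition. If $V_1\cap V_2=\emptyset$, take a sequence $u_k\to u$ in $L^1(V_1\cup V_2)$. Then $G_{\varepsilon_k}(u_k,V_1\cup V_2)=G_{\varepsilon_k}(u_k,V_1)+G_{\varepsilon_k}(u_k,V_2)$, and the liminf of a sum dominates the sum of the liminfs. Since the restrictions of $u_k$ converge on each $V_i$, taking the infimum over such sequences gives $G'(u,V_1\cup V_2)\ge G'(u,V_1)+G'(u,V_2)$.

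For \eqref{eq: subadd} we may assume $G'(u,V)$ and $G''(u,W)$ are finite. Then Lemma \ref{boundsp+} gives $Du\in L^{p^-}(V\cup W)$. Take $u_k$ realizing $G'(u,V)$ along a subsequence with $\lim_k G_{\varepsilon_k}(u_k,V)=G'(u,V)$, and take $v_k$ realizing $G''(u,W)$. The fundamental estimate \eqref{eq:lp_fundamental_estimate} cannot be applied directly, since its error term involves $\int|u_k-v_k|^{p^+}$ while we only have $L^1$ convergence (and at best $L^{p^-}$ or Sobolev-embedding control). The remedy is to truncate with Lemma \ref{tronc}: replace $u_k,v_k$ by $u_k^M,v_k^M$, which are bounded by $2M$. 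By $(iii)$, $u_k^M-v_k^M\to u^M-u^M=0$ in $L^1$, so by boundedness also in $L^{p^+}$. Hence, for $\varphi$ the cut-off from Lemma \ref{fundest},
\[
G_{\varepsilon_k}(\varphi u_k^M+(1-\varphi)v_k^M,V'\cup W)\le(1+\sigma)\big(G_{\varepsilon_k}(u_k^M,V)+G_{\varepsilon_k}(v_k^M,W)\big)+o(1)+\sigma ,
\]
and the glued functions converge to $u^M$ in $L^1$. This yields
\[
G'(u^M,V'\cup W)\le(1+\sigma)\Big(\liminf_k G_{\varepsilon_k}(u_k^M,V)+\limsup_k G_{\varepsilon_k}(v_k^M,W)\Big)+\sigma .
\]

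The main obstacle is controlling the energy of the truncations uniformly in $k$. We have $Du_k^M=D\psi_M(u_k)Du_k$ with $\|D\psi_M\|\le1$ and $D\psi_M=I$ on $\{|u_k|\le M\}$. Since $g(x,\cdot)$ is radially increasing, $g(x,Du_k^M)\le g(x,Du_k)$ pointwise, so truncation never increases $G_\varepsilon$. Here the radial monotonicity of $g$ replaces reinforced-$\Delta_2$. Consequently $G_{\varepsilon_k}(u_k^M,V)\le G_{\varepsilon_k}(u_k,V)$, and similarly for $v_k^M$, so $G'(u^M,V'\cup W)\le(1+\sigma)(G'(u,V)+G''(u,W))+\sigma$. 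Finally let $M\to\infty$: $u^M\to u$ in $L^1$ by Lemma \ref{tronc}, and $G'(\cdot,V'\cup W)$ is $L^1$-lower semicontinuous, so $G'(u,V'\cup W)\le\liminf_M G'(u^M,V'\cup W)$. Letting $\sigma\to0$ gives \eqref{eq: subadd}.

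Inequality \eqref{eq: subaddG''} is proved identically, now with $u_k$ a recovery sequence for $G''(u,V)$ and taking limsups throughout. The lower semicontinuity of $G''$ handles the limit $M\to\infty$ in the same way.
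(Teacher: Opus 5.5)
Your proposal is correct and follows the paper's proof essentially step by step: truncation via Lemma~\ref{tronc}; $L^{p^+}$-convergence of $u_k^M-v_k^M$ from property $(iii)$ together with the bound $2M$; the fundamental estimate of Lemma~\ref{fundest}; the inequality $G_{\varepsilon_k}(u_k^M,\cdot)\le G_{\varepsilon_k}(u_k,\cdot)$, which follows from the radial monotonicity of $g$; and finally $M\to\infty$, $\sigma\to0$ by $L^1$-lower semicontinuity. Two minor points: the cut-off $\varphi_k$ depends on $k$, and extracting a subsequence for $u_k$ is unnecessary, since $\liminf_k(a_k+b_k)\le\liminf_k a_k+\limsup_k b_k$, which is exactly what the paper uses.
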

\begin{proof}
 In order to show that  $G'(u,\cdot)$ is superadditive, let $V,W\in\A({U })$ be such that  $V\cap W=\varnothing$ and  let us  assume that $G'(u, V\cup W)<+\infty$.  Let     $(u_k)$ be a recovery sequence for  $G'(u, V\cup W)$. Hence  we have:
\begin{align*}
G'(u,V\cup W)&=\liminf_{k\to +\infty} G_{{\varepsilon_k}}(u_k,V\cup W)=\liminf_{k\to +\infty} \int_{V\cup W} g\left(\frac x {{\varepsilon_k}}, Du_k\right)\, \mathrm{d}x=\\
&= \liminf_{k\to +\infty} \left( G_{{\varepsilon_k}}(u_k,V)+G_{{\varepsilon_k}}(u_k,W)\right)\\
&\geq \liminf_{k\to +\infty} G_{{\varepsilon_k}}(u_k,V)+ \liminf_{k\to +\infty}  G_{{\varepsilon_k}}(u_k,W)
\geq  G'(u,V)+ G'(u,W)
\end{align*}
which proves the superadditivity of $G'$. 
   Now  we prove \eqref{eq: subadd}; the proof of \eqref{eq: subaddG''} is analogous. Assume that $G'(u,V)$ and $G''(u,W)$ are finite; otherwise the thesis is trivial.  Hence,  
 there exist $\{u_k\} \subseteq L^{1}(V, \mathbb R^N)$ and $ \{v_k\} \subseteq  L^{1}(W, \mathbb R^N)$ such that  
  \[Du_k\in L^{p^-}(V,{\bf M}^{d\times N}), \  u_k\to u \ \hbox{ in } L^1(V, \mathbb R^N) \ \hbox{ and } \ \liminf_{k\to +\infty} G_{\varepsilon_k}(u_k,V)=  G'(u,V),\] 
  \[Dv_k\in L^{p^-}(W,{\bf M}^{d\times N} ), \  v_k\to u \ \hbox{ in } L^1(W,  \mathbb R^N)  \ \hbox{ and } \  \limsup_{k\to +\infty} G_{\varepsilon_k}(v_k,W)=  G''(u,W).\]
Thanks to Lemma \ref{boundsp+}, Part 2, we have that $u\in W^{1,1}({V },\mathbb R^N)\cap W^{1,1}({W },\mathbb R^N) $. Then, for every fixed $M>0$,  we consider the functions $u^M $,  $u_k^M\in W^{1,1}({V },\mathbb R^N)\cap L^\infty({V },\mathbb R^N)$ and  $v_k^M\in W^{1,1}({W },\mathbb R^N)\cap L^\infty({W },\mathbb R^N)$ given by Lemma \ref{tronc}.
Since $|Du_k^M(x)| \leq | D u_k (x) | $ for a.e.\ $x\in {V }$ we obtain that  $Du_k^M\in L^{p^-}({V }, {\bf M}^{N\times d})$.  Similarly, $Dv_k^M\in L^{p^-}({W}, {\bf M}^{N\times d})$.
Now,  thanks to Lemma \ref{fundest}, for every $\sigma>0$,  there exists $M_\sigma > 0$ and 
    a cut-off function $\phi_{\varepsilon_k}$ between $V'$ and $V$ such that,   defined $w^M_k=\phi_{k} u_k^M+(1-\phi_{k})v_k^M$,  one has
\begin{equation}\label{fe}
G_{\varepsilon_k}(w^M_k,V'\cup W)\le (1+\sigma)\bigl[G_{\varepsilon_k}(u_k^M,V)+G_{\varepsilon_k}(v_k^M,W)\bigr]+M_\sigma\int_{V\cap W} |u_k^M-v_k^M|^{p^+}\, \mathrm{d}x + \sigma,
 \end{equation}
 for every $k\in\N$. 
 Thanks to Lemma \ref{tronc}, Part $(iii)$,  we have  that, for every $M\geq 2$, $$\|u_k^M - u^M\|_{L^{p^+}(V,\mathbb R^N)}\leq \|u_k^M - u^M\|^{\frac 1{p^+}}_{L^{1}(V,\mathbb R^N)}(4M)^{\frac{p^+-1}{p^+}}\leq \|u_k - u\|^{\frac 1{p^+}}_{L^{1}(V,\mathbb R^N)}(4M)^{\frac{p^+-1}{p^+}}\  .$$
 Hence, passing to the limit as $k\to +\infty$, the above inequality implies that   $\|u_k^M - u^M\|_{L^{p^+}(V,\mathbb R^N)}\to 0$. 
 Analogously,  as $k\to +\infty$,  we have that  $v_k^M\to u^M$ in  $L^{p^+}(W, \mathbb R^N)$.
Hence,   $w^M_k\to u^M$ in $L^1(V'\cup W, \mathbb R^N)$ as $k\to +\infty$  and 
 \begin{equation}\label{conv} \|u_k^M - v_k^M\|_{L^{p^+}(V\cap W,\mathbb R^N)}\to 0 \hbox{ as } k\to +\infty.
 \end{equation}
By definition of $G'$,  this implies \begin{equation}\label{eq:sx}
    G'(u^M,V'\cup W)\le \liminf_{k\to +\infty} G_{\varepsilon_k}(w^M_k,V'\cup W).
    \end{equation}
    For the right-hand side of \eqref{fe},   in order to bound from above, by using  property \eqref{monotonia} and since $g(x, \cdot)$ is radially increasing,  we get that   \begin{equation}\label{eq:dx}
    G_{\varepsilon_k}(u_k^M,V)\le G_{\varepsilon_k}(u_k,V), \qquad G_{\varepsilon_k}(v_k^M,W)\le G_{\varepsilon_k}(v_k,W).
    \end{equation}
 By passing to   the liminf as $k\to +\infty$ in   \eqref{fe} and by exploiting \eqref{eq:sx},  \eqref{eq:dx} and \eqref{conv},  we deduce:
 \[\begin{split}
         G'(u^M,V'\cup W)&\le (1+\sigma)\liminf_{k\to +\infty} \left(G_{\varepsilon_k}(u_k,V)+G_{\varepsilon_k}(v_k,W)\right)+\sigma\\
         &\le (1+\sigma)\left( \liminf_{k\to +\infty} G_{\varepsilon_k}(u_k,V)+  \limsup_{k\to +\infty} G_{\varepsilon_k}(v_k,W)\right)+\sigma\\
         &=(1+\sigma)\left(G'(u,V)+G''(u,W)\right)+ \sigma.
         \end{split}
\]
    Finally,  sending   $M\to\infty$ and then  $\sigma\to 0$, since  $u^M\to u$ in $L^1({U };\mathbb R^N)$, the $L^1({U })$-lower semicontinuity of $G'$ implies  \eqref{eq: subadd}.
\end{proof}
We deduce from the previous result an inner regularity property for $G'(u, \cdot)$ and $G''(u, \cdot)$, for the moment whenever $u\in W^{1,p^+}({U },\R^N)$. The statement is formulated in a slightly more general way, in  view of the later application in Lemma \ref{G''min-aux}.
 \begin{lemma}\label{innreg} Let  $u\in L^1({U },\R^N)$,  $V\in \mathcal A(U)$ and assume that there exists a compact set $K\subset V$ such that $u\in W^{1,p^+}(V\setminus K,\R^N)$. Then 
  \begin{equation}\label{regolmodif}
    G'(u,V)=\sup\{G'(u,V'):\, V'\Subset V\}, \qquad   G''(u,V)=\sup\{G''(u,V'):\, V'\Subset V\}.
\end{equation}
In particular,  for every $u\in W^{1,p^+}(U,\R^N)$ we have that $G'(u,\cdot)$ and $G''(u,\cdot)$ are inner regular  increasing set functions on $\mathcal A( {{U }}).$ 
Moreover, $G''(u,\cdot)$ is subadditive and superadditive for every $u\in W^{1,p^+}({U },\R^N)$.
\end{lemma}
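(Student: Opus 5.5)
We follow the standard De Giorgi–Letta argument (as in \cite[Lemma 18.4]{DM} or \cite[Proposition 11.5]{BDF}), with the subadditivity estimates of Lemma \ref{subadditivity} and the upper bound of Lemma \ref{boundsp+} as the only ingredients. Monotonicity of $G'(u,\cdot)$ and $G''(u,\cdot)$ is immediate from the definitions, since restriction of a competing sequence to a smaller set does not increase the energy ($g\geq 0$). Hence in \eqref{regolmodif} only the inequality ``$\le$'' needs proof. We may assume the supremum on the right-hand side is finite.

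Fix $\eta>0$. We choose an open set $W$ with $K\subset W\Subset V$ (possible since $K$ is compact). Then we choose $V''\Subset V'\Subset V$ with $W\Subset V''$, and with $|V\setminus \overline{V''}|$ so small that
\[
\beta\int_{V\setminus \overline{V''}}\bigl(1+|Du|^{p^+}\bigr)\,\mathrm{d}x<\eta .
\]
This is possible because $V\setminus\overline{V''}\subset V\setminus K$, where $u\in W^{1,p^+}$, by absolute continuity of the integral. Set $Z:=V\setminus\overline{V''}\in\mathcal A(U)$. Then $V''\cup Z\supseteq V$ up to the null set $\partial V''$. Since the $G$ functionals are defined on open sets, we instead use $V= V''' \cup Z$ with $V''\Subset V'''\Subset V'$ and $Z:=V\setminus \overline{V''}$, so that $V'''\cup Z=V$ exactly.

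Applying \eqref{eq: subadd} with the triple $(V'', V''')$ replaced by $(V''',V')$, i.e. $V'''\Subset V'$, and $W$ replaced by $Z$, we get
\[
G'(u,V)=G'(u,V'''\cup Z)\le G'(u,V')+G''(u,Z)\le G'(u,V')+\beta\int_{Z}\bigl(1+|Du|^{p^+}\bigr)\,\mathrm{d}x\le \sup_{V'\Subset V}G'(u,V')+\eta .
\]
The second inequality is \eqref{altop+}, applied on $Z$, where $u\in W^{1,p^+}(Z,\R^N)$. The same chain with \eqref{eq: subaddG''} gives the statement for $G''$. Letting $\eta\to 0$ yields \eqref{regolmodif}.

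For $u\in W^{1,p^+}(U,\R^N)$, take $K=\emptyset$; this gives inner regularity on every $V\in\A(U)$, and monotonicity was noted above. For subadditivity of $G''(u,\cdot)$, let $V,W\in\A(U)$. Every $V'\Subset V\cup W$ can be written as $V'\subset V_1\cup W_1$ with $V_1\Subset V$ and $W_1$ open, $W_1\subseteq W$ (a compactness splitting of $\overline{V'}$). Then \eqref{eq: subaddG''} and monotonicity give $G''(u,V')\le G''(u,V)+G''(u,W)$, and inner regularity concludes.

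Superadditivity of $G''$ is the delicate point, since $\limsup$ is not superadditive in general. Here we use \eqref{epsk}: for $V,W$ disjoint, approximate them from inside by $D_1\Subset V$ and $D_2\Subset W$ with $D_1,D_2\in\mathcal D$. Then $D_1\cup D_2\in\mathcal D$, so $G''=G'$ on $D_1\cup D_2$. Superadditivity of $G'$ (Lemma \ref{subadditivity}) then gives
\[
G''(u,V\cup W)\ge G'(u,D_1\cup D_2)\ge G'(u,D_1)+G'(u,D_2)=G''(u,D_1)+G''(u,D_2),
\]
where the last equality again uses \eqref{epsk}. Taking suprema and using inner regularity of $G''$ (through density of $\mathcal D$) completes the proof. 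The main obstacle is exactly this last step: it is the only place where the choice of subsequence \eqref{epsk} is essential, and it requires care in approximating arbitrary inner sets by elements of $\mathcal D$.
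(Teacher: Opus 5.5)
Your proof is correct and follows essentially the same route as the paper. Inner regularity comes from \eqref{eq: subadd} (resp.\ \eqref{eq: subaddG''}) applied with a thin open collar around $\partial V$, whose contribution is controlled by \eqref{altop+}; the paper uses $V\setminus K'$ for a compact $K'\supseteq K$, you use $V\setminus\overline{V''}$. Subadditivity of $G''$ then follows from \eqref{eq: subaddG''} plus inner regularity, and superadditivity from approximation by sets of $\mathcal D$ together with \eqref{epsk}. A cosmetic remark: the step $G''(u,V\cup W)\ge G'(u,D_1\cup D_2)$ needs only monotonicity and $G''\ge G'$, not $D_1\cup D_2\in\mathcal D$.
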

\begin{proof} Let $u\in L^{1}({U },\R^N)$ and let  $V,K$ be as in the statement. We show that $G'$ satisfies the  first  property in \eqref{regolmodif} (the proof for $G''$ is analogous).
Let $K'$ be a compact set  such that $K\subseteq K'\subseteq  V$ and choose $V', V''\in \mathcal{A}({V })$  such that $K'\subset V''\Subset V' \Subset V$. We apply Lemma \ref{subadditivity} with  the triple $V''$, $V'$ and $W=V\setminus K'$, and observe that $V''\cup W=V$. Also using \eqref{altop+} on $W$,  we then have
\begin{align*}
    G'(u,V)&\le G'(u,V')+G''(u,V\setminus K')\le \sup\{G'(u,V'):\, V'\Subset V\}+G''(u,V\setminus K')\\
    &\le \sup\{G'( u,V'):\, V'\Subset V\}+\beta\displaystyle \int_{V\setminus K'} \left(1+\left|Du\right|^{p^+}\right)\, \mathrm{d}x.\
\end{align*}
Then, as $|V\setminus K'|\to 0$, we get 
 \begin{equation*}    G'(u,V)\le \sup\{G'(u,V'):\, V'\Subset V\}.
\end{equation*}
Since the reverse inequality is trivial, we obtain that the first equality in \eqref{regolmodif} holds.
If $u\in W^{1,p^+}(U, \mathbb R^N)$, \eqref{regolmodif} follows on every $V\in \mathcal A(U)$ by simply taking $K=\emptyset$. 

Now, let $u\in W^{1,p^+}({U },\R^N)$. Given $V,W\in\A({U })$, from  \eqref{eq: subaddG''} and the inner regularity of $G''(u,\cdot)$ we deduce that \begin{equation*}
    G''(u,V\cup W)=\sup\{G''(u,V'\cup W)\mid V'\Subset V\}\le G''(u,V)+G''(u,W)
\end{equation*}
that is the subadditivity of $G''(u,\cdot)$.

As for the superadditivity, given  $V,W\in\A({U })$ such that  $V\cap W=\varnothing$, consider $V'$, $W' \in \mathcal{D}$ with $V'\Subset V$, $W'\Subset W$. Using  the superadditivity of $G'(u,\cdot)$ of  Lemma \ref{subadditivity}  and \eqref{epsk}, we have that 
\[
G''(u, V\cup W)\geq G'(u, V \cup W)\geq G'(u, V)+G'(u, W)\geq G'(u, V')+G'(u, W')=G''(u, V')+G''(u, W')
\]
and we can conclude by the inner regularity of $G(u, \cdot)$ whenever $u\in W^{1,p^+}({U },\R^N)$.
\end{proof}
We are now in a position to give a partial integral representation result for $G''(u, V)$, namely when $u\in W^{1,p^+}({U },\R^N)$.

\begin{lemma}\label{limsup}
    Under the previous  hypotheses, there exists a Carathéodory function $\gamma\colon {U }\times {\bf M}^{N\times d }\to [0,+\infty)$, even in the second variable, such that  
\begin{enumerate}
    \item for every  $u\in W^{1,p^+}({U },\R^N)$ and for every  $V\in \mathcal{A}({U })$  it holds
    \begin{equation}\label{reprlimsup}
    G''(u,V)=  \displaystyle \int_V \gamma (x, D u)\,\mathrm{d}x;
    \end{equation}
    \item  $\alpha\left(|\Sigma|^{p^-}-1\right)\leq \gamma(x,\Sigma)\leq \beta\left(1+|\Sigma|^{p^+}\right)$ for a.e. $x\in {U }$ and for every $\Sigma\in {\bf M}^{N\times d}$;
    \item $\gamma(x,\cdot)$ is convex for a.e. $x\in {U }$.
  \end{enumerate}
\end{lemma}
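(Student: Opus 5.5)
The plan is to apply a standard integral representation theorem for local functionals on $W^{1,p^+}$, for instance \cite[Theorem 20.1]{DM} or its Sobolev version \cite[Theorem 9.1 / Chapter 20]{BDF}, to the functional $(u,V)\mapsto G''(u,V)$ restricted to $u\in W^{1,p^+}(U,\R^N)$. The hypotheses of such a theorem are:
\begin{enumerate}
\item[(a)] measure property: $G''(u,\cdot)$ is the restriction to $\A(U)$ of a Borel measure;
\item[(b)] locality: $G''(u,V)=G''(v,V)$ whenever $u=v$ a.e. on $V$;
\item[(c)] lower semicontinuity of $G''(\cdot,V)$ in $L^1$ (hence in $W^{1,p^+}$);
\item[(d)] $p^+$-growth from above: $0\le G''(u,V)\le \beta\int_V(1+|Du|^{p^+})\,\mathrm{d}x$;
\item[(e)] invariance under constants: $G''(u+c,V)=G''(u,V)$.
\end{enumerate}
Once these are verified, the representation theorem gives a Carathéodory $\gamma(x,\Sigma)$ with \eqref{reprlimsup} and $0\le\gamma\le\beta(1+|\Sigma|^{p^+})$.

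To check the hypotheses, I start with (a). For $u\in W^{1,p^+}(U,\R^N)$, Lemma \ref{innreg} gives that $G''(u,\cdot)$ is increasing, inner regular, subadditive and superadditive. By the De Giorgi–Letta criterion \cite[Theorem 14.23]{DM}, $G''(u,\cdot)$ then extends to a Borel measure. Property (d) is \eqref{altop+}, so this measure is absolutely continuous with respect to Lebesgue measure. Properties (c) and (e) are general facts: $\Gamma$-limsups are $L^1$-lower semicontinuous, and $G_\varepsilon$ is invariant under adding constants, so the recovery sequences can be translated. For (b), I would use the standard argument \cite[Proposition 16.15]{DM}: the $\Gamma$-limsup localized on $V$ depends only on $u|_V$, since convergence is in $L^1(V)$ and $G_\varepsilon(\cdot,V)$ is local.

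The evenness of $\gamma$ comes from the symmetry $G_\varepsilon(-u,V)=G_\varepsilon(u,V)$, which holds because $g$ is radial. This gives $G''(-u,V)=G''(u,V)$. Testing with affine $u=\Sigma x$ on small balls and using Lebesgue differentiation yields $\gamma(x,-\Sigma)=\gamma(x,\Sigma)$ a.e.; for all $\Sigma$ simultaneously, I would use rational $\Sigma$ and continuity. If needed, I replace $\gamma$ by $\tfrac12(\gamma(x,\Sigma)+\gamma(x,-\Sigma))$, which gives the same integrals on affine functions and hence on all $u$ via the representation.

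The lower bound in (2) follows from \eqref{bassop-}, since $G''\ge G'$. Applied to $u(x)=\Sigma x$, it gives $\int_V\gamma(x,\Sigma)\,\mathrm{d}x\ge\alpha\int_V(|\Sigma|^{p^-}-1)\,\mathrm{d}x$ for all $V$, hence the bound a.e., for rational $\Sigma$ first and then all $\Sigma$ by continuity.

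Convexity in (3) is the main obstacle. $G''$ is a $\Gamma$-limsup rather than a $\Gamma$-limit, and $\Gamma$-limsups of convex functionals need not be convex in general. I would exploit \eqref{epsk}. On every $D\in\mathcal D$, $G''(\cdot,D)=G'(\cdot,D)$ is a $\Gamma$-limit of the convex functionals $G_{\varepsilon_k}(\cdot,D)$, and $\Gamma$-limits of convex functionals are convex. Hence $u\mapsto\int_D\gamma(x,Du)\,\mathrm{d}x$ is convex on $W^{1,p^+}$ for every $D\in\mathcal D$.

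Taking $u=\Sigma_1 x$, $v=\Sigma_2 x$ and $t\in[0,1]\cap\Q$ gives
\[
\int_D\gamma(x,t\Sigma_1+(1-t)\Sigma_2)\,\mathrm{d}x\le\int_D\bigl(t\gamma(x,\Sigma_1)+(1-t)\gamma(x,\Sigma_2)\bigr)\,\mathrm{d}x .
\]
Since $\mathcal D$ is dense, this inequality extends to all small cubes by inner regularity. Lebesgue differentiation then gives pointwise convexity a.e. for rational data, and continuity of $\gamma(x,\cdot)$ gives it everywhere. Alternatively, one can note that $G''(\cdot,V)$ itself is convex for every $V$, by inner regularity from sets in $\mathcal D$.
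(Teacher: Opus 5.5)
Your argument is correct, and apart from the convexity step it is the paper's proof. You verify locality, the measure property (Lemma \ref{innreg} plus De Giorgi--Letta), the upper bound \eqref{altop+}, invariance under constants and $L^1$-lower semicontinuity, then apply \cite[Theorem 9.1]{BDF} and read off the lower bound from \eqref{bassop-} using $G'\le G''$.

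For convexity, however, the obstacle you describe does not exist. A $\Gamma$-limsup of convex functionals is always convex: if $u_k\to u$ and $v_k\to v$, then $tu_k+(1-t)v_k\to tu+(1-t)v$, and $\limsup_k\bigl(ta_k+(1-t)b_k\bigr)\le t\limsup_k a_k+(1-t)\limsup_k b_k$. It is the $\Gamma$-liminf that can lose convexity, because the two liminfs may be attained along different subsequences.

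The paper uses exactly this fact to get convexity of $\Sigma\mapsto G''(u_\Sigma,B_r(x))$, and then passes to the pointwise $\limsup$ in \eqref{blowupgamma}, which preserves convexity. Your detour through \eqref{epsk}, the density of $\mathcal D$ and Lebesgue differentiation is valid but unnecessary. Likewise, the convexity of $G''(\cdot,V)$ in your closing remark holds for every $V$ directly, without going through $\mathcal D$.
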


\begin{proof}
In order to apply \cite[Theorem 9.1]{BDF}, in the following we verify its hypotheses. \begin{enumerate}
    \item {\it locality}: $G''(\cdot,V)$ is trivially local for each $V\in\mathcal A({U })$;
    \item {\it measure property}: for every $u\in  W^{1,p^+}({U },\R^N)$, by Lemmas \ref{subadditivity} and \ref{innreg}, we have that  $G''(u,\cdot)$ is increasing, inner regular, subadditive and superadditive; thus, by De Giorgi-Letta criterion,  $G''(u,\cdot)$ is the restriction on $\mathcal A({U })$ of a Borel measure.
    \item {\it growth condition from above}: it is ensured by \eqref{altop+}.
    \item {\it translation invariance in $u$}: trivially we have $G''(u+z,V)=G''(u,V)$ for any $u\in  W^{1,p^+}({U },\R^N)$, $z\in\R^N$ and $V\in\mathcal A({U })$.
    \item {\it lower semicontinuity}: for each $V\in\mathcal A({U })$, since $G''(\cdot,V)$ is $L^1({U },\R^N)$-lower semicontinuous, it is also lower semicontinuous on $  W^{1,p^+}({U },\R^N)$.
\end{enumerate}
By applying \cite[Theorem 9.1]{BDF}, we get that there exists a Carath\'eodory function $\gamma\colon {U }\times {\bf M}^{d\times N }\to [0,+\infty)$ satisfying
$\gamma(x,\Sigma)\leq \left(1+|\Sigma|^{p^+}\right)$ for a.e. $x\in {U }$ and for every $\Sigma\in {\bf M}^{N\times d}$
and such that
 \begin{equation*}\label{represGtildep+}
    G''(u,V)= \int_V \gamma(x,D u(x))\,\mathrm{d}x \qquad  \forall u\in W^{1,p^+}({U },\R^{ N}), \qquad  \forall\, V\in \A({U }).
        \end{equation*}
        Such a function $\gamma$ is defined by
\begin{equation}\label{blowupgamma}
 \gamma(x,\Sigma)=\limsup_{r\to 0^+}\frac{G''(u_\Sigma , B_{r}(x))}{|B_{r}(x)|} \qquad \forall x\in {U },\, \forall\,\Sigma \in {\bf M}^{N\times d},
\end{equation} 
where $u_\Sigma$ is  the affine function $y\mapsto \Sigma y$.
In particular, thanks to \eqref{bassop-}, we have that 
\begin{equation}\label{alfa} \alpha\left(| \Sigma|^{p^-}-1\right)
\leq \gamma(x,\Sigma)
\end{equation}
for a.e. $x\in {U }$ and for every $\Sigma\in {\bf M}^{N\times d}$. By definition of $\gamma$, it easily follows that $\gamma(x,\cdot)$ is even for every $x\in U$.
Finally,  thanks to the convexity property of $G_{\varepsilon_{k}}$, we have that  the function $\Sigma\mapsto  G''(u_{\Sigma},B_{r}(x))$ is convex. 
Being   $\gamma(x,\cdot)$  the $\limsup$ of convex functions, we conclude that   $\gamma(x,\cdot)$ is convex for a.e. $x\in {U }$.
\end{proof}

As it usually happens for $\Gamma$-convergence problems, the function $\gamma(x,\Sigma)$ defined in \eqref{blowupgamma} can be equivalently characterized by the asymptotical behavior of minimum problems. Even though it is not necessary for the present section, we will discuss this property (together with a result about the convergence of minima) in the   Proposition \ref{G''min}, which will reveal itself to be crucial in our treatment of the stochastic case (see Section \ref{stochastic}). 
Before we need  the following auxiliary lemma that provides an additivity property on $W^{1,p^-}(U , \R^N)$, provided that a boundary condition is imposed. We recall, indeed, that so far we know  that  $G''(u,\cdot)$ is  additive on $\mathcal A(U)$ only when $u\in W^{1,p^+}(U , \R^N)$. 
\begin{lemma}\label{G''min-aux} Let $\Sigma\in {\bf M}^{N\times d}$. Under the assumptions of this section, the following finite  additivity property holds: if $A\in \mathcal A(U)$,  $k\in \N$,  $\{V_1,\cdots V_k\}$ are open sets and $v_i\in W^{1,p^-}(V_i , \R^N)$  are  such that 
\begin{enumerate}
\item $V_i \subset A$ for every $i\in \{1,\cdots,k\}$ and $V_i\cap V_j=\emptyset$ for every $i\not=j$; 
\item  $|\partial V_i|=0$;
\item  $v_i=u_\Sigma$ in a neighbourhood of $\partial V_i$;
\end{enumerate}  then, defined 
\begin{equation}\label{tildev}\tilde v(x)=\begin{cases} v_i(x) \quad \hbox{ in } V_i ,\\
u_{\Sigma} \quad \hbox{ otherwise in } U , \end{cases} 
\end{equation}
 we have that  $\tilde v\in  W^{1,p^-}(U , \R^N)$ and 
\begin{equation}\label{finiteadditivity}
G''( \tilde v, A)=\sum_{i=1}^k G''(v_i,  V_i )+G''\left(\tilde v, A\setminus \bigcup_{i=1}^k  \overline {V_i}\right)\,.
\end{equation}
\end{lemma}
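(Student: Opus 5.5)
Since each $v_i$ agrees with $u_\Sigma$ near $\partial V_i$, gluing gives $\tilde v\in W^{1,p^-}(U,\R^N)$. Moreover $\tilde v$ coincides with the affine function $u_\Sigma$ (which is in $W^{1,p^+}$) on an open neighbourhood of the closed set $A\setminus\bigcup_i V_i$. The plan is to prove \eqref{finiteadditivity} through two inequalities, using Lemma \ref{subadditivity}, the superadditivity of $G'$, and the modified inner regularity of Lemma \ref{innreg}. The latter was formulated precisely for functions that are $W^{1,p^+}$ outside a compact set.

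\emph{Inequality ``$\ge$''.} The sets $V_1,\dots,V_k$ and $A\setminus\bigcup_i\overline{V_i}$ are pairwise disjoint open subsets of $A$, and $G''(\tilde v,\cdot)$ is increasing. I would like to invoke superadditivity of $G''$, but Lemma \ref{innreg} gives it only on $W^{1,p^+}$. So I argue through $G'$ and \eqref{epsk} instead. Since $v_i=u_\Sigma$ near $\partial V_i$, there are compact sets $K_i\subset V_i$ with $v_i\in W^{1,p^+}(V_i\setminus K_i)$, and the analogous statement holds for $\tilde v$ on $A$ near any compact subset. For $V_i'\Subset V_i$ and $W'\Subset A\setminus\bigcup\overline{V_i}$ chosen in $\mathcal D$, superadditivity of $G'$ gives
$$G''(\tilde v,A)\ge G'(\tilde v,A)\ge \sum_i G'(v_i,V_i')+G'(\tilde v,W')=\sum_i G''(v_i,V_i')+G''(\tilde v,W').$$
Here I use locality, since $\tilde v=v_i$ on $V_i$. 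Taking the supremum over $V_i'$ and $W'$ and applying \eqref{regolmodif} for $G''$ (valid on $V_i$ with compact $K_i$, and on $W'$ where $\tilde v$ is affine) yields ``$\ge$''.

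\emph{Inequality ``$\le$''.} This is the step I expect to be delicate, because $A\setminus\bigcup\overline{V_i}$ together with the $V_i$ does not cover the null set $\bigcup\partial V_i$. Near that set, however, $\tilde v=u_\Sigma$. Choose compact sets $K_i\subset V_i$ outside of which $v_i=u_\Sigma$, and open sets $V_i''\Subset V_i$ containing $K_i$. Set $W_\delta=A\cap\bigl(\bigcup_i(V_i\setminus K_i)\cup (A\setminus\bigcup_i \overline{V_i})\cup N_\delta\bigr)$, where $N_\delta$ is a $\delta$-neighbourhood of $\bigcup\partial V_i$ on which $\tilde v=u_\Sigma$. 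Then $\bigl(\bigcup_i V_i''\bigr)\cup W_\delta=A$, with $\bigcup V''_i\Subset\bigcup V_i$. I would apply \eqref{eq: subaddG''} iteratively, or once to the union of the $V_i$ (these are disjoint, so $G''$ on their union splits by the ``$\ge$'' argument plus subadditivity). This gives
$$G''(\tilde v,A)\le \sum_i G''(v_i,V_i)+G''(\tilde v,W_\delta).$$
On $W_\delta$ the function $\tilde v$ is in $W^{1,p^+}$, actually affine except inside $A\setminus\bigcup\overline{V_i}$. I would then apply subadditivity again to split $W_\delta$ into a piece compactly contained in $A\setminus\bigcup\overline{V_i}$ and a thin region around $\bigcup_i\partial V_i$ (including $V_i\setminus K_i$ shrunk towards the boundary). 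The thin region is controlled by \eqref{altop+}, which bounds it by $\beta(1+|\Sigma|^{p^+})$ times its measure. That measure tends to $0$ as $\delta\to0$ and $K_i\uparrow V_i$, because $|\partial V_i|=0$. The remaining piece is bounded by $G''(\tilde v,A\setminus\bigcup\overline{V_i})$ by monotonicity.

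\emph{Main obstacle.} The main difficulty is bookkeeping in the ``$\le$'' direction: the sets must satisfy the compact-containment hypothesis $V'\Subset V$ of Lemma \ref{subadditivity}, while the leftover region has vanishing measure. This is exactly where the hypothesis $|\partial V_i|=0$ and the affine boundary values enter.
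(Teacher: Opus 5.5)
Your proposal is correct and follows essentially the paper's argument. The ``$\ge$'' inequality comes from superadditivity of $G'$, \eqref{epsk}, locality and the modified inner regularity of Lemma \ref{innreg}. The ``$\le$'' inequality comes from \eqref{eq: subaddG''} plus a thin layer around $\bigcup_i\partial V_i$ on which $\tilde v=u_\Sigma$, whose contribution vanishes because $|\partial V_i|=0$.

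The differences from the paper are minor. You treat all $k$ sets at once rather than proving $k=1$ and iterating, which also sidesteps the fact that in the iteration the glued function is no longer affine outside the next $V_{j+1}$. You control the layer by \eqref{altop+} rather than by \eqref{reprlimsup}. Two small corrections: $\tilde v=u_\Sigma$ on all of $W_\delta$, including $A\setminus\bigcup_i\overline{V_i}$. And the splitting over the $V_i$ is justified by applying \eqref{eq: subaddG''} iteratively, or by trivial subadditivity over disjoint open sets, not by the ``$\ge$'' argument.
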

\begin{proof}
First of all,  we show  \eqref{finiteadditivity} for $k=1$:  let $V\subset A$ be an open set with $|\partial V|=0$ and let $v\in W^{1,p^-}(V , \R^N)$ be such that  $v=u_\Sigma$ in a neighbourhood  $Z$ of $ \partial V$. We claim  that 
\begin{equation}\label{additivitytilde}
G''( \tilde v, A)=G''( v,  V )+G''(\tilde v,A\setminus \overline{V}).
\end{equation}
Indeed, fix $\sigma >0$ and observe that, since $|\partial V|=0$, we can take an open set $Z_\sigma\subseteq \R^d $ and  two open subsets $V'\Subset  V\Subset V''$ such that 
$\partial V \subset Z_\sigma$, $Z_\sigma \cap V'' \subset Z$ ,  $|Z_\sigma \cap V''|\le \sigma$ and  $A= V' \cup (Z_\sigma\cap A)$.
By using the monotonicity of $G''$,  the subadditivity property \eqref{eq: subaddG''} when  $W=Z_\sigma\cap A$ and   the locality property of $G''$, we obtain   that
\[
G''(\tilde v , A)= G''(\tilde v , V' \cup (Z_\sigma \cap A))  \leq  G''( v ,  V )+G''(  \tilde v , Z_\sigma \cap A).
\]
Now, taking into account that  $v=u_\Sigma$ in $Z_\sigma\cap V''$,  the representation result \eqref{reprlimsup} can be applied to $G''(v, Z_\sigma\cap V'')$: splitting the integral,  we obtain  that
   \[
G''(\tilde v, A)\leq G''( v,  V )+G''(\tilde v,A\setminus \overline{V})+ \displaystyle \int_{Z_\sigma \cap V''} \gamma (x, \Sigma)\,\mathrm{d}x.\]
As $|Z_\sigma \cap V''|\le \sigma$ and $\sigma$ is arbitrary, we get the $\le$ inequality in \eqref{additivitytilde}. 
For the converse inequality, let  $D_1,D_2\in \mathcal D$ such that $D_1\Subset A\setminus \overline V$ and 
 $D_2\Subset V$. 
Then,  being $G'$ superadditive,  taking also into account the locality of $G''(\cdot,D_1)$ and  \eqref{epsk}, we obtain that 
\begin{align*}G''(\tilde v,A)&\geq G'(\tilde v,D_1\cup  D_2) \geq G'(\tilde v,D_1)+G'(\tilde v,  D_2)=G''( u_{\Sigma},D_1)+G''(\tilde v,  D_2).
\end{align*}
Thanks to the density of  $\mathcal D$, we can pass to the supremum with respect to $D_1\Subset A\setminus \overline V$ and  $D_2\Subset V$ in the above inequality. Since $u_{\Sigma}\in W^{1,p^+}(A\setminus \overline V,\R^N)$ and $\tilde v=u_{\Sigma}\in W^{1,p^+}(A\setminus K,\R^N)$ with $K=\overline{V\setminus Z}$, we can use Lemma \ref{innreg} to get 
\begin{align*}G''(\tilde v,A)
\geq G''(u_{\Sigma}, A\setminus \overline V)+G''(\tilde v,  V)=G''(\tilde v, A\setminus \overline V)+G''(\tilde v,  V),
\end{align*}
which  concludes the proof of \eqref{additivitytilde}. Finally, by  applying repeatedly   property \eqref{additivitytilde} with  $A'_j=A\setminus\left( \overline {V_1}\cup \cdots  \cup \overline{V_{j}}\right) $ and $V=V_{j+1}$, 
we get \eqref{finiteadditivity}.
\end{proof}

\begin{prop}\label{G''min}
    Let $m_{G''}$ and   $m_{G_{\varepsilon_k}}$  be given by  $\eqref{eq: minimointornobordo}$ and let $u_\Sigma$ denote the affine function $y\mapsto \Sigma y$. Under the assumptions of this section, the following hold:
   \begin{enumerate}
\item 
for every $\Sigma \in \mathbf{M}^{N\times d}$ and for a.e.  $x \in U $ we have
\begin{equation}\label{eq:cellformula1}
\gamma(x,\Sigma)
=
\lim_{r\to 0^+}
\frac{m_{G''}(u_\Sigma, Q_r(x))}{|Q_r(x)|},
\end{equation}
where $\gamma$ is given by  \eqref{blowupgamma};
\item 
for every $\Sigma \in \mathbf{M}^{N\times d}$ and for a.e $x \in U $, we have
\begin{equation}\label{eq:convminima}
\limsup_{r\to 0^+}
\limsup_{k\to +\infty}
\frac{m_{G_{\varepsilon_k}}(u_\Sigma, Q_r(x))}{|Q_r(x)|}\le \gamma(x, \Sigma)\le \liminf_{\theta \to 1^-}\liminf_{r\to 0^+}
\limsup_{k\to +\infty}
\frac{m_{G_{\varepsilon_k}}(u_\Sigma, Q_{\theta r}(x))}{|Q_{\theta r}(x)|}.
\end{equation}
\end{enumerate}

\end{prop}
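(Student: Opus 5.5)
The plan is to prove the upper bound in \eqref{eq:cellformula1} directly from the representation \eqref{reprlimsup}, the lower bound by a covering/contradiction argument built on Lemma \ref{G''min-aux}, and then to derive \eqref{eq:convminima} by comparing $m_{G_{\varepsilon_k}}$ with $m_{G''}$ on concentric cubes $Q_{\theta r}(x)\subset Q_r(x)$. Throughout I fix $\Sigma$ and work at points $x$ that are Lebesgue points of $\gamma(\cdot,\Sigma)$. There are countably many rational $\Sigma$, and continuity in $\Sigma$ should extend the conclusions to all $\Sigma$, though this last step needs checking since the null set a priori depends on $\Sigma$.

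\emph{Upper bound in part 1.} Since $u_\Sigma$ is an admissible competitor and belongs to $W^{1,p^+}$, \eqref{reprlimsup} gives
\[
m_{G''}(u_\Sigma,Q_r(x))\le G''(u_\Sigma,Q_r(x))=\int_{Q_r(x)}\gamma(y,\Sigma)\,\mathrm{d}y .
\]
Lebesgue differentiation then yields $\limsup_r m_{G''}(u_\Sigma,Q_r(x))/|Q_r(x)|\le\gamma(x,\Sigma)$.

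\emph{Lower bound in part 1.} Argue by contradiction. Suppose that for some $\eta>0$ the set
\[
E=\Bigl\{x:\ \liminf_{r\to 0^+} \frac{m_{G''}(u_\Sigma,Q_r(x))}{|Q_r(x)|}<\gamma(x,\Sigma)-2\eta\Bigr\}
\]
has positive measure. By Lusin's theorem, shrink $E$ to a compact set on which $\gamma(\cdot,\Sigma)$ is continuous and every point is a density point. Then choose an open set $A\supset E$ with $|A\setminus E|$ small.

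Apply Vitali's covering theorem with arbitrarily small cubes $Q_i=Q_{r_i}(x_i)\subset A$, $x_i\in E$, satisfying
\[
m_{G''}(u_\Sigma,Q_i)\le\Bigl(\inf_{Q_i\cap E}\gamma(\cdot,\Sigma)-\eta\Bigr)|Q_i| .
\]
Pass to a finite disjoint subfamily covering $E$ up to a set of measure at most $\delta$. Pick near-minimizers $v_i$ with $v_i=u_\Sigma$ near $\partial Q_i$. By Lemma \ref{bassop-}, $v_i\in W^{1,p^-}$, and I glue them into $\tilde v$ as in \eqref{tildev}.

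Since $v_i-u_\Sigma\in W^{1,p^-}_0(Q_i)$, Poincaré's inequality gives $\|\tilde v-u_\Sigma\|_{L^1}\le C\max_i r_i\,(\text{energy bound})$. Hence $\tilde v\to u_\Sigma$ in $L^1$ as the mesh goes to zero. Lower semicontinuity of $G''$ and \eqref{finiteadditivity} then give
\[
\int_A\gamma(y,\Sigma)\,\mathrm{d}y\le\liminf\Bigl(\sum_i m_{G''}(u_\Sigma,Q_i)+o(1)\Bigr)+\int_{A\setminus\bigcup Q_i}\gamma(y,\Sigma)\,\mathrm{d}y .
\]
The first term on the right is at most $\int_E(\gamma-\eta)+\delta$-type errors. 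This contradicts $\int_A\gamma\ge\int_E\gamma$ once $|A\setminus E|$ and $\delta$ are small, and $\gamma(\cdot,\Sigma)\le\beta(1+|\Sigma|^{p^+})$ controls all the remainders. I expect this step to be the main technical obstacle, mainly in organizing the measurability and uniformity through Lusin's theorem.

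\emph{First inequality in \eqref{eq:convminima}.} Take a recovery sequence $u_k$ for $G''(u_\Sigma,Q_r(x))$ and truncate it by Lemma \ref{tronc} with $M>|\Sigma|\sqrt d\,r+|\Sigma||x|$, so that $u_\Sigma^M=u_\Sigma$ on $Q_r(x)$. Then $u_k^M\to u_\Sigma$ in $L^{p^+}$, and radial monotonicity gives $G_{\varepsilon_k}(u_k^M)\le G_{\varepsilon_k}(u_k)$.

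Apply Lemma \ref{fundest} with $V'=Q_{\theta r}$, $V=Q_r$, $W=Q_r\setminus\overline{Q_{\theta' r}}$ for $\theta'<\theta$, joining $u^M_k$ to $u_\Sigma$. This produces admissible competitors for $m_{G_{\varepsilon_k}}(u_\Sigma,Q_r)$, and
\[
\limsup_k m_{G_{\varepsilon_k}}(u_\Sigma,Q_r)\le(1+\sigma)\bigl(G''(u_\Sigma,Q_r)+\beta(1+|\Sigma|^{p^+})|Q_r\setminus Q_{\theta' r}|\bigr)+\sigma .
\]
Now let $\sigma\to0$ and $\theta'\to1$ with $r$ fixed, divide by $|Q_r|$, and let $r\to0$ using the Lebesgue point property.

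\emph{Second inequality in \eqref{eq:convminima}.} Take near-minimizers $w_k$ of $m_{G_{\varepsilon_k}}(u_\Sigma,Q_{\theta r})$ and extend them by $u_\Sigma$ to $Q_r$. Their energies are bounded, so the $p^-$-coercivity and Poincaré's inequality give a subsequence $w_k\to w$ in $L^1$ with $w=u_\Sigma$ on $Q_r\setminus\overline{Q_{\theta r}}$. The limit $w$ is admissible for $m_{G''}(u_\Sigma,Q_r)$.

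Using Lemma \ref{innreg} with $K=\overline{Q_{\theta r}}$, the density of $\mathcal D$ and \eqref{epsk}, I get $G''(w,Q_r)=\sup_{D}G'(w,D)\le G'(w,Q_r)$. It follows that
\[
m_{G''}(u_\Sigma,Q_r)\le\limsup_k m_{G_{\varepsilon_k}}(u_\Sigma,Q_{\theta r})+\beta(1+|\Sigma|^{p^+})|Q_r\setminus Q_{\theta r}| .
\]
Divide by $|Q_{\theta r}|$, apply part 1 as $r\to0$, and finally take the $\liminf$ as $\theta\to1^-$.
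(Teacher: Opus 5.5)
Your proof is correct, and it reaches the statement by a different and more self-contained route than the paper.

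For part~1, the paper first establishes the continuity property \eqref{mdelta} and then imports Lemmas~3.3 and~3.5 of \cite{bfm}. You get the upper bound directly from $m_{G''}(u_\Sigma,Q_r(x))\le G''(u_\Sigma,Q_r(x))=\int_{Q_r(x)}\gamma(y,\Sigma)\,\mathrm{d}y$. You prove the lower bound by an explicit Vitali covering, gluing of near-minimizers via Lemma~\ref{G''min-aux}, Poincar\'e's inequality and $L^1$-lower semicontinuity of $G''$. This is essentially the global-method mechanism carried out by hand, and it does not need \eqref{mdelta}.

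The measurability issue you flag is harmless. Vitali's theorem holds for arbitrary sets with respect to Lebesgue outer measure. Apply Lusin's theorem to $\gamma(\cdot,\Sigma)$ to get a compact $K$ with $|U\setminus K|$ small and $\gamma(\cdot,\Sigma)|_K$ continuous. Run the covering on $E\cap K$, which still has positive outer measure. Choose the open set $A\supset E\cap K$ with $|A|$ close to the outer measure of $E\cap K$, so that $|A\setminus K|$ is small.

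For the first inequality in \eqref{eq:convminima}, the paper joins a recovery sequence of a near-minimizer of $m_{G''}(u_\Sigma,Q_r(x))$ to $u_\Sigma$. This gives the sharper cube-wise bound $\limsup_k m_{G_{\varepsilon_k}}(u_\Sigma,Q_r(x))\le m_{G''}(u_\Sigma,Q_r(x))$, and the paper then invokes part~1. You join the recovery sequence of $u_\Sigma$ itself and conclude by Lebesgue differentiation, which is simpler and independent of part~1.

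For the second inequality, the paper proves $G'(v,Q_r(x))=G''(v,Q_r(x))$ for competitors by a contradiction argument. That argument relies on right-continuity in $r$, \eqref{mdelta}, \eqref{additivitytilde} and \eqref{epsk} on $U$. Your derivation uses Lemma~\ref{innreg} with $K=\overline{Q_{\theta r}(x)}$, the density of $\mathcal D$ and \eqref{epsk}. It is shorter and equally valid, and it is the same device the paper itself uses in Lemma~\ref{G''min-aux}.

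The detour through rational $\Sigma$ is unnecessary, since the statement allows the null set to depend on $\Sigma$.

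In short, the paper's route is shorter on the page because it delegates to a general result, and it records the comparison of minima on each fixed cube. Yours is self-contained and dispenses with \eqref{mdelta} altogether.
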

\begin{proof}
We start with \eqref{eq:cellformula1}. We claim that
\begin{equation}\label{mdelta}
\lim_{\delta\to 0^+}
m_{G''}(u_\Sigma, Q_{r-\delta}(x))
=
m_{G''}(u_\Sigma, Q_r(x)).
\end{equation}
This is the content of \cite[Lemma~6]{bflm}, adapted to our context.
To prove this claim, fix $\Sigma \in \mathbf{M}^{N\times d}$ and,  given $\delta,\sigma>0$, we choose $v\in W^{1,p^-}({U }, \mathbb R^N)$ such that
$v=u_\Sigma$ on a neighbourhood  of $\partial Q_{r-\delta}(x)$ and
\begin{equation}\label{eq:minH}
m_{G''}(u_\Sigma, Q_{r-\delta}(x))
\ge
G''(v, Q_{r-\delta}(x))-\sigma.
\end{equation}
Set
\begin{equation*}
w=
\begin{cases}
v & \text{in } Q_{r-\delta}(x),\\[2pt]
u_\Sigma & \text{in } Q_r(x)\setminus\overline{Q_{r-\delta}(x)}.
\end{cases}
\end{equation*}
For all $\theta>0$ sufficiently small, one has $w=u_\Sigma$ in $Q_r(x)\setminus\overline{Q_{r-\delta-2\theta}(x)}$. Then, by \eqref{eq: subaddG''} with $V'=Q_{r-\delta-\theta}(x)$, $V=Q_{r-\delta}(x)$ and $W=Q_r(x)\setminus\overline{Q_{r-\delta-2\theta}(x)}$  we have
\begin{equation*}
m_{G''}(u_\Sigma, Q_r(x))
\le
G''(w, Q_r(x))
\le
G''(v, Q_{r-\delta}(x))
+
\beta \max\left\{|\Sigma|^{p^-}, |\Sigma|^{p^+}\right\}(r^d-(r-\delta-2\theta)^d)
\end{equation*}
Using \eqref{eq:minH}, letting $\sigma$, $\theta$ and eventually $\delta \to 0^+$, we get
\begin{equation}\label{eq:lscminH}
\liminf_{\delta\to 0^+}
m_{G''}(u_\Sigma, Q_{r-\delta}(x))
\ge
m_{G''}(u_\Sigma, Q_r(x)).
\end{equation}

Conversely, given $\eta>0$, choose $v\in W^{1,p^-}(U , \mathbb R^N)$ such that
$v=u_\Sigma$ in a neighborhood of $\partial Q_r(x)$ and
\begin{equation*}
m_{G''}(u_\Sigma, Q_r(x))
\ge
G''(v, Q_r(x))-\eta.
\end{equation*}
Let $\delta_0>0$ be small enough so that $v=u_\Sigma$ on $Q_r(x)\setminus Q_{r-\delta_0}(x)$.
Then, for all $0<\delta<\delta_0$ one has  $v=u_\Sigma$ in a neighborhood of $\partial  Q_{r-\delta}(x)$, hence
\begin{equation*}
m_{G''}(u_\Sigma, Q_{r-\delta}(x))
\le
G''(v, Q_{r-\delta}(x))
\le
G''(v, Q_r(x))
\le
m_{G''}(u_\Sigma, Q_r(x))+\eta.
\end{equation*}
Letting $\delta\to 0^+$ and then $\eta\to 0^+$, we get
\begin{equation*}
\limsup_{\delta\to 0^+}m_{G''}(u_\Sigma, Q_{r-\delta}(x))
\le
m_{G''}(u_\Sigma, Q_r(x)),
\end{equation*}
which, combined with \eqref{eq:lscminH}, proves the claim.

Once \eqref{mdelta} is established, the remainder of the proof of  \eqref{eq:cellformula1} can now be carried out as in \cite[Lemma 3.3 and 3.5]{bfm}, provided that the following remarks are taken into account.  

 First of all,   \cite[Lemma 3.3]{bfm} holds under our standing assumptions since  $G''(\cdot,V)$ is $L^1({U },\R^N)$-lower semicontinuous, satisfies the coercivity condition \eqref{bassop-} and the finite additivity property  \eqref{finiteadditivity}. 
Moreover, we can show  the result of \cite[Lemma 3.5]{bfm} with $u=u_{\Sigma}$ since 
\begin{itemize}
\item $G''(u_{\Sigma},\cdot)$ satisfies the required assumptions of being the restriction to $\mathcal A(U)$ of a Radon measure, which is absolutely continuous  with respect to the Lebesgue measure (thanks to the growth condition from the above  in  \eqref{altop+});
\item  the continuity property   \eqref{mdelta} holds. 
\end{itemize}

\vspace{3mm}

In order to show the last point \eqref{eq:convminima},  we start claiming that for every $Q_r(x)\subseteq U$ and for every $v\in L^1(U,{\bf M}^{d\times N})$, if  $v=u_\Sigma$ in a neighborhood of $\partial Q_r(x)$, then  
\begin{equation}\label{eq:uguali}
G'(v, Q_r(x))=G''(v, Q_r(x)).
\end{equation}
To prove this, notice that,   by  \eqref{eq: subadd}, we have
\[
G'(v, Q_{r+\theta}(x))\le G'(v, Q_r(x))+G''(u_\Sigma, Q_{r+\theta}(x)\setminus \overline{Q_{r-\theta}(x)})
\] 
so that, by \eqref{altop+}, $r$ is a right continuity point for the increasing set function $s\mapsto G'(v, Q_s(x))$. Assume, by contradiction,   that  \eqref{eq:uguali} does not hold.  Then
$G'(v, Q_r(x))<+\infty$, which implies  that  $v\in W^{1,p^-}(Q_r(x), \R^N)$, and there exists $0<\sigma< G''(v, Q_r(x)) -G'(v, Q_r(x))$.
By applying the  continuity property \eqref{mdelta}, we can  find $\theta>0$ such that
\begin{equation}\label{finitezza}
G'(v, Q_{r+\theta}(x))\leq G'(v, Q_r(x))+\sigma<G''(v, Q_r(x))
\end{equation}
and $v=u_{\Sigma}$ on $Q_{r+\theta}(x)\setminus Q_{r}(x)$.
For $\tilde v$ as in \eqref{tildev} with $V_1=Q_{r+\theta}(x)$, 
by combining   the subadditivity \eqref{eq: subadd} with  \eqref{finitezza} and \eqref{additivitytilde}, we obtain  that 
\begin{align*}
\nonumber G'( \tilde v, {U })&=G'( \tilde v,  Q_{r+\frac \theta 2 }(x) \cup (U\setminus \overline{Q_r(x)}))\leq G'( \tilde  v,   Q_{r+\theta}(x))+G''(u_{\Sigma}, U \setminus  \overline {Q_{r}(x)})\\
&=  G'(  v,   Q_{r+\theta}(x))+G''(u_{\Sigma}, U \setminus  \overline {Q_{r}(x)})\\
\label{interme}  &< G''(v, Q_r(x))+G''(u_{\Sigma}, U \setminus  \overline {Q_{r}(x)})=G''(\tilde v, U)
\end{align*}
which implies a contradiction since, thanks to \eqref{epsk}, we have that  $G''(\tilde v, U)=G'(\tilde v, U)$.
Hence  \eqref{eq:uguali} holds. 

\noindent Now, given $\Sigma$ and $\theta\in (0,1)$,  we fix $x$ such that \eqref{eq:cellformula1} holds. For every $k\in \N$ let  $v_k\in L^1(U,{\bf M}^{d\times N})$ be such that    $G_{\varepsilon_k}(v_k,Q_{\theta r}(x))\leq m_{G_{\varepsilon_k}}(u_{\Sigma},Q_{\theta r}(x))+2^{-k}$. The upper bound in \eqref{growth-p-p+} implies that 
\[ m_{G_{\varepsilon_k}}(u_{\Sigma},Q_{\theta r}(x)) \leq G_{\varepsilon_k}(u_{\Sigma},Q_{\theta r}(x))\leq \beta \max \{|\Sigma|^{p^+}, |\Sigma|^{p^-}\}.\]
Hence, thanks to the lower bound in \eqref{growth-g},  we get that 
 $$\sup_{k\in \N}  \int_{Q_{\theta r}(x)}  \alpha \left(|D v_{{k}}|^{p^-}-1\right)\,\mathrm{d}x\leq \sup_{k\in \N} G_{\varepsilon_k}(v_k,Q_{\theta r}(x))\leq  \beta \max\{|\Sigma|^{p^+}, |\Sigma|^{p^-}\},$$
 i.e. the sequence $\{Du_k\}_{k}  $ is bounded in $L^{p^-}(Q_{\theta r}(x) , {\bf M}^{N\times d}).$
This implies that   $\{v_k\}$ is weakly compact in $W^{1,p^-}(Q_{\theta r}(x),\R^N)$. If we extend $v_k$ with $u_\Sigma$ in the rest of $U$, we have that, up to a subsequence, $v_{k_j}\to v$ in $L^1(U,\R^N)$, $v\in W^{1,p^-}(U ,\R^N)$  and $v=u_\Sigma$  on  $ {U }\setminus \overline{Q_{\theta r}(x)}$. We set $\widetilde v_k=v_{k_j}$ if $k_j \le k <k_{j+1}$ and we have, also using \eqref{eq:uguali},
\[
\begin{split}
G''(v, Q_r(x))&=G'(v, Q_r(x))\le \liminf_{k \to +\infty} G_{\varepsilon_k}(\widetilde v_k,Q_{r}(x))\\
&\le
\liminf_{j \to +\infty} G_{\varepsilon_{k_j}}(v_{k_j},Q_{r}(x))
\le 
\limsup_{k\to+\infty} G_{\varepsilon_k}(v_k,Q_{r}(x)).
\end{split}
\] 
Since $v=u_\Sigma$ in $Q_r(x)\setminus \overline{Q_{\theta r}(x)}$,  we then have
\[
  m_{G''}(u_\Sigma ,Q_r(x))\leq   G''(v,Q_r(x))\le\limsup_{k\to\infty} G_{\varepsilon_k}(v_k,Q_{\theta r}(x))+ \beta \max\left\{|\Sigma|^{p-}, |\Sigma|^{p^+}\right\}(1-\theta^d) r^d.
\]

On dividing by $|Q_r(x)|=\frac 1{\theta^d}|Q_{\theta r}(x)|$, taking the liminf as $r \to 0^+$ and then as $\theta \to 1^-$, since $x$ satisfies  \eqref{eq:cellformula1}, we get
\[
 \gamma(x, \Sigma)\le \liminf_{\theta \to 1^-}\liminf_{r\to 0^+}
\limsup_{k \to +\infty}
\frac{m_{G_{\varepsilon_k}}(u_\Sigma, Q_{\theta r}(x))}{|Q_{\theta r}(x)|}.
\]

\vspace{3mm}

Conversely, given $\sigma>0$, choose $v$ such that
$v=u_\Sigma$ in a neighborhood  of $\partial Q_r(x)$ and
\begin{equation*}
m_{G''}(u_\Sigma, Q_r(x))
\ge
G''(v, Q_r(x))-\sigma.
\end{equation*}
Let $\delta_0>0$ be small enough so that $v=u_\Sigma$ on $Q_r(x)\setminus Q_{r-\delta_0}(x)$. Now, if $\{v_k\}$ is  a recovery sequence for $G''(v, Q_r(x))$, and $M>\sqrt d|\Sigma|$, the truncated functions $\{v_k^M\}$ converge to $v^M=u_\Sigma$ in $L^{p^+}(Q_r(x)\setminus Q_{r-\delta_0}(x))$.  
Furthermore, as already noticed, $G_{\varepsilon_k}(v_k^M,Q_r(x))\le G_{\varepsilon_k}(v_k,Q_r(x))$. 
For each $0<\delta<\delta_0$ we can then exploit the fundamental estimate in  Lemma \ref{fundest} with $V'=Q_{r-\delta}(x)$, $V=Q_r(x)$ and $W=Q_r(x)\setminus \overline{Q_{r-\delta/2}(x)}$. We find $\varphi_{k}\in C^{\infty}_0(Q_r(x))$ such that, for $z_k=\varphi_{k}v_k^M+(1-\varphi_{k})u_\Sigma$, it holds
\[
 \limsup_{k\to +\infty}G_{\varepsilon_k}(z_k,Q_r(x))\le (1+\sigma)(G''(v, Q_r(x))+ \limsup_{k\to\infty} G_{\varepsilon_k}(u_\Sigma, Q_r(x)\setminus Q_{r-\delta}(x)))+\sigma.
\]
 Passing to the minima in the left-hand side  and  exploiting the growth condition \eqref{growth-p-p+}, we obtain 
\[
    \limsup_{k\to +\infty}m_{G_{\varepsilon_k}}(u_\Sigma,Q_r(x))\le (1+\sigma)\left(m_{G''}(u_\Sigma,Q_r(x))+\sigma \right)+\beta \max\left\{|\Sigma|^{p-}, |\Sigma|^{p^+}\right\}(r^d-(r-\delta)^d)+\sigma.
\]
As $\delta$ and $\sigma\to 0^+$, we get that 
\begin{equation*}\label{limsupm}
\limsup_{k\to +\infty}m_{G_{\varepsilon}}(u_\Sigma,Q_r(x))\le m_{G''}(u_\Sigma,Q_r(x)).
\end{equation*} 
On dividing by $|Q_r(x)|$ and using again  \eqref{eq:cellformula1}, we get the first inequality in \eqref{eq:convminima}, which concludes the proof.
\end{proof}

\begin{rem}\label{no-periodicity3}  In the rest of this section, starting from next proposition, all the results will rely
on the periodicity assumption on $g(\cdot,\Sigma)$. \end{rem}

\begin{prop}\label{gammaN}
The function $\gamma$ defined by \eqref{blowupgamma} can be chosen to be independent of the first variable  and 
 satisfies the reinforced-$\Delta_2$ property  \eqref{Delta2gamma}.
Moreover,     it holds
\begin{equation}\label{youngfunc} C \min \{|\Sigma|^{p^+}, |\Sigma|^{p^-}\}\leq  \gamma( \Sigma)\leq \max\{|\Sigma|^{p^+}, |\Sigma|^{p^-}\} \qquad \hbox{ for every }  \Sigma\in {\bf M}^{N\times d}\end{equation}
where $C$ is a positive constant.
In particular,  $\gamma$ is an $N$-function. 
\end{prop}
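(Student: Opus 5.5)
Our plan has four steps: $x$-independence from periodicity, the reinforced-$\Delta_2$ property from the cell-minimum characterization, the two-sided bounds, and finally the $N$-function conclusion.

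\emph{Step 1: $x$-independence.} We work with the cell-minimum characterization \eqref{eq:cellformula1} and the two-sided estimate \eqref{eq:convminima} of Proposition \ref{G''min}. Fix $\Sigma$, $\theta$, and two points $x,y$ at which these formulas hold. For small $r$ and large $k$ we choose $z_k\in\Z^d$ with $|\varepsilon_k z_k-(y-x)|\le\sqrt d\,\varepsilon_k$. By $1$-periodicity of $g$, translating a competitor on $Q_{\theta r}(x)$ by $\varepsilon_k z_k$ gives a competitor on the cube $Q_{\theta r}(x+\varepsilon_k z_k)$, which is contained in $Q_r(y)$ for $k$ large. The affine datum changes only by a constant, which is harmless by translation invariance in $u$.

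We then extend the competitor by $u_\Sigma$ (plus the constant) on the remaining annulus. This annulus costs at most $\beta\max\{|\Sigma|^{p^-},|\Sigma|^{p^+}\}(1-\theta^d)r^d$. Dividing by $r^d$, letting $k\to\infty$, then $r\to0$, then $\theta\to1$, and using both inequalities in \eqref{eq:convminima}, we get $\gamma(y,\Sigma)\le\gamma(x,\Sigma)$. By symmetry $\gamma(\cdot,\Sigma)$ is a.e. constant. Restricting to rational $\Sigma$ and using continuity (convexity) we may choose $\gamma=\gamma(\Sigma)$. Convexity and evenness are inherited from Lemma \ref{limsup}.

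\emph{Step 2: the reinforced $\Delta_2$ property \eqref{Delta2gamma}.} We again argue at the level of minimum problems $m_{G_{\varepsilon_k}}(u_\Sigma,Q)$. Take an almost-minimizer $v$ for $u_\Sigma$ and set $w=Av$. Then $w=u_{A\Sigma}$ near $\partial Q$ and $Dw=A\,Dv$, so $|Dw|\le\|A\|\,|Dv|$ pointwise. Since $g(x,\cdot)$ is increasing in $|\cdot|$, \eqref{cons2} gives
\[
g\bigl(\tfrac{x}{\varepsilon},|A\,Dv|\bigr)\le g\bigl(\tfrac{x}{\varepsilon},\|A\|\,|Dv|\bigr)\le\max\{\|A\|^{p^-},\|A\|^{p^+}\}\,g\bigl(\tfrac{x}{\varepsilon},|Dv|\bigr).
\]
Hence $m_{G_{\varepsilon_k}}(u_{A\Sigma},Q)\le\max\{\|A\|^{p^-},\|A\|^{p^+}\}\,m_{G_{\varepsilon_k}}(u_\Sigma,Q)$.

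Passing to the limit through \eqref{eq:convminima} is delicate, because the upper and lower bounds use different cube families ($Q_r$ versus $Q_{\theta r}$). We handle this by using the first inequality for $A\Sigma$ on the cube $Q_{\theta r}$ and the second for $\Sigma$. Concretely, we apply the $\limsup_r\limsup_k$ bound with cubes of side $\theta r$, which is legitimate since it holds for every small radius, to get $\gamma(A\Sigma)\le C\liminf_\theta\liminf_r\limsup_k(\dots)_{\theta r}$. The right-hand side is then compared with $\gamma(\Sigma)$ using the second inequality.

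I expect this matching of the $\liminf/\limsup$ orders to be the main obstacle. If it resists, the alternative is to work directly with $G''$. Applying the same pointwise estimate to recovery sequences gives
\[
G''(Av,V)\le\max\{\|A\|^{p^-},\|A\|^{p^+}\}\,G''(v,V),
\]
since $Av_k\to Av$ in $L^1$. Choosing $v=u_\Sigma$, this inequality combined with the blow-up formula \eqref{blowupgamma} gives \eqref{Delta2gamma} immediately. This route is in fact cleaner and avoids minimum problems altogether, so I will use it.

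\emph{Step 3: the bounds \eqref{youngfunc}.} The upper bound follows by testing $G''(u_\Sigma,\cdot)$ with the constant sequence and using \eqref{growth-p-p+}; this gives $\gamma(\Sigma)\le\beta\max\{\dots\}$, with the constants normalized as in the statement. The lower bound $\alpha(|\Sigma|^{p^-}-1)$ from Lemma \ref{limsup} is not homogeneous, so we take a different route. By Step 2 with $A=sI_N$, $\gamma(s\Sigma)\ge\min\{s^{p^-},s^{p^+}\}\gamma(\Sigma)$. It therefore suffices to show $c:=\min_{|\Sigma|=1}\gamma(\Sigma)>0$, which follows because $\gamma(\Sigma)\ge\alpha(t^{p^-}-1)t^{-p^+}>0$ for $|\Sigma|=1$, obtained by scaling from a large $t$. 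Also $\gamma(0)=0$, since $u_0$ is a competitor with zero energy.

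\emph{Step 4: $N$-function.} Continuity comes from convexity. The bounds of Step 3 are Young functions in $|\Sigma|$ because $p^->1$, so $\gamma$ is an $N$-function.
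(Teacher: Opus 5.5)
Your proof is correct. For the reinforced-$\Delta_2$ property, the two-sided bounds and the $N$-function conclusion it is the paper's own argument, and the only genuinely different part is the $x$-independence.

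\textbf{What matches the paper.} The paper also pushes a recovery sequence for $G''(u_\Sigma,\cdot)$ through $A$, using $|A\,Du_k|\le\|A\|\,|Du_k|$, the radial monotonicity of $g$ and \eqref{cons2}. It also makes the lower bound homogeneous by rescaling \eqref{alfa}: it uses $\gamma\ge\alpha$ on $\{|\Sigma|\ge 2\}$ and $t=|\Sigma|/2$, which is your unit-sphere minimum in another guise.

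\textbf{The $x$-independence.} The paper invokes \cite[Proposition 21.11]{BDF}. There, recovery sequences for $G''(u_\Sigma,V)$ are translated by lattice vectors $\varepsilon_k z_k\to y$. Periodicity then gives $G''(u_\Sigma,V'+y)\le G''(u_\Sigma,V)$ for $V'\Subset V$, and inner regularity of the measure $G''(u_\Sigma,\cdot)$ absorbs the $O(\varepsilon_k)$ mismatch of the domains.

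You instead translate almost-minimizers of the Dirichlet problems, absorb the mismatch by shrinking to $Q_{\theta r}$ and paying the annulus cost, and close with \eqref{eq:convminima}. This is legitimate, since Proposition \ref{G''min} does not use periodicity.

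The order-of-limits problem you feared does not arise. For fixed $\theta$ the factor can be absorbed into $r$, so the right-hand bound in \eqref{eq:convminima} just says $\gamma(x,\Sigma)\le\liminf_{s\to0^+}\limsup_{k}m_{G_{\varepsilon_k}}(u_\Sigma,Q_s(x))/|Q_s(x)|$. For the same reason, your first (minimum-problem) version of Step 2 would also have worked.

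The paper's route is lighter, needing only inner regularity and the measure property. Yours rests on the heavier blow-up characterization of Proposition \ref{G''min}. Its merit is that it is exactly the deterministic counterpart of the argument later used in Proposition \ref{gamma=varphi}, so the periodic and stationary cases are handled by one mechanism.

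\textbf{Minor corrections.}
\begin{itemize}
\item The inequality $\gamma(s\Sigma)\ge\min\{s^{p^-},s^{p^+}\}\gamma(\Sigma)$ comes from Step 2 with $A=s^{-1}I_N$ applied to $s\Sigma$, not with $A=sI_N$.
\item The factor $\beta$ in the upper bound cannot be normalized away: for $g(x,s)=\beta s^{p}$ with $p^-=p^+=p$ one gets $\gamma(\Sigma)=\beta|\Sigma|^p$. The paper's proof likewise only yields $\gamma(\Sigma)\le\beta\max\{|\Sigma|^{p^-},|\Sigma|^{p^+}\}$.
\item The function $s\mapsto c\min\{s^{p^-},s^{p^+}\}$ is not convex when $p^-<p^+$, since its slope drops from $cp^+$ to $cp^-$ at $s=1$. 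So it is not itself a Young function. To meet Definition \ref{def:N-fun}, take a convex minorant such as $\delta s^{p^+}$ on $[0,1]$ and $\delta\bigl(1+\tfrac{p^+}{p^-}(s^{p^-}-1)\bigr)$ on $[1,+\infty)$, with $\delta\le c\,p^-/p^+$. The paper passes over this point as well.
\end{itemize}
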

\begin{proof}
For $\Sigma \in {\bf M}^{N\times d}$ set as usual $u_\Sigma(x)=\Sigma x$. Exploiting the inner regularity of $G''(u_\Sigma, \cdot)$ and the $1$-periodicity of $g$, the proof that $\gamma$ is independent of $x$ can be carried out exactly as in \cite[Proposition 21.11]{BDF}.
 Now we will show that $\gamma$ satisfies the remaining properties.
 Indeed let $A\in {\bf M}^{N\times N}$,  and let $\{u_k\}\subseteq W^{1,p^+}({U },\R^N)$ be such that $u_k\to u_\Sigma $      in $L^1({U },\R^N)$ as $k\to +\infty$ and         
$$
\gamma(\Sigma)|{U }|=G''(u_\Sigma,{U })=\limsup_{k \to +\infty} \int_{U }g\left(\frac{x}{\varepsilon_k}, Du_k \right) \, \mathrm{d}x.$$
 Since $g$ is radially increasing in the last variable and $|A\Sigma|\le \|A\|\,|\Sigma|$, thanks to  the Cauchy-Schwarz inequality,     we can apply  \eqref{cons2} with $s=\|A\|$ and $t=|Du_k(x)|$ and  get that, for a.e. $x\in U$ and for every $k\in \mathbb N$, it holds
  \[g\left(\frac{x}{\varepsilon_k}, ADu_k\right) \leq \max\{ \|A\|^{p^+},\|A\|^{p^-}  \} g\left(\frac{x}{\varepsilon_k}, Du_k\right).\]
 Hence we  obtain that
     \begin{equation}\label{stima}
    \begin{split}
    \gamma(A \Sigma)|{U }|&=\int_{{U }} \gamma(A \Sigma)\mathrm{d}x= G''(Au_\Sigma,{U })\\
     &\leq  \limsup_{k \to +\infty} \int_{U }g\left(\frac{x}{\varepsilon_k}, ADu_k\right) \, \mathrm{d}x \\
     &\leq  \max\{ \|A\|^{p^+},\|A\|^{p^-}  \} \limsup_{k}   \int_U  g\left(\frac{x}{\varepsilon_k}, Du_k\right)\, \mathrm{d}x\\
     &=\max\{ \|A\|^{p^+},\|A\|^{p^-}  \}\gamma(\Sigma)|{U }|,
     \end{split}
     \end{equation}
    i.e. $\gamma$ satisfies the reinforced-$\Delta_2$ property. 
     In order to show that $\gamma$ satisfies  the right hand estimate in \eqref{youngfunc},   with a similar argument and using that $\|I_N\|=1$ where $I_N$ is the identity matrix, we have
     $$
\gamma( \Sigma)|{U }|=G''(u_{\Sigma},{U })\leq  \limsup_{k\to +\infty} \int_{{U }} g\left(\frac{x}{\varepsilon_k}, \Sigma\right) \, \mathrm{d}x\leq \max\{|\Sigma|^{p^+}, |\Sigma|^{p^-}\} \limsup_{k\to +\infty} \int_{{U }} g\left(\frac{x}{\varepsilon_k}, 1\right) \, \mathrm{d}x$$     
     which, using (A0) and dividing by $|U|$, entails
 $$
\gamma( \Sigma)\leq \beta\max\{|\Sigma|^{p^+}, |\Sigma|^{p^-}\}\,.
$$
 This also gives  $\gamma(0)=0$. Hence, the left hand estimate in \eqref{youngfunc} has to be proved only for   $ \Sigma\in {\bf M}^{N\times d}\setminus\{0\}$. Firstly, we notice that, by applying \eqref{stima} to  $t\Sigma$ and $A=\frac 1 t I_N$, with $t>0$, we obtain  
$$
    \gamma( \Sigma)\leq \max\{ t ^{-p^+},  t ^{-p^-}  \}\gamma(t \Sigma),
    $$
that is 
\begin{equation}\label{inbasso}
\gamma(t\Sigma )\geq \min\{ t^{p^+}, t^{p^-}\}\gamma(\Sigma) \qquad \forall{ t>0}, \forall\, \Sigma \in {\bf M}^{N\times d}.
\end{equation}  
Now, \eqref{alfa} gives $\gamma(\Sigma) \ge \alpha$ whenever $|\Sigma|\ge 2$. Hence, for arbitrary $\Sigma \in {\bf M}^{N\times d}\setminus\{0\}$, taking $t=\tfrac{|\Sigma|}2$ in \eqref{inbasso} we have
\[
\gamma(\Sigma)=\gamma\left(\frac{|\Sigma|}2\frac{2\Sigma}{|\Sigma|}\right)\ge\frac{\alpha}{2^{p^+}}\min\{|\Sigma|^{p^+}, |\Sigma|^{p^-}\}\,,
\]
as required.
Thanks also to Lemma \ref{limsup}, we can conclude that $\gamma$ is an $N$-function.
\end{proof}
 
Now,  from Corollary \ref{senzaLinfty}, we may  deduce the $\Gamma$-limsup inequality on $W_{\gamma}({U },\R^N)$.
\begin{cor}\label{necessario}Assume \eqref{epsk} and let  $\gamma$ be  defined by \eqref{blowupgamma}.  
Then 
  \begin{equation}\label{reprparziale}
    G''(u,V)\leq  \displaystyle \int_V \gamma ( D u)\,\mathrm{d}x, \qquad \forall\, u\in W_{\gamma}({U },\R^N), \forall\, V\in \mathcal{A}({U }).
    \end{equation}
  
    \end{cor}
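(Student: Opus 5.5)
Fix $u\in W_{\gamma}(U,\R^N)$ and $V\in\mathcal A(U)$; we may assume $\int_V\gamma(Du)\,\mathrm{d}x<+\infty$. The plan is to combine three facts already available. First, \eqref{reprlimsup} gives the equality $G''(w,V')=\int_{V'}\gamma(Dw)\,\mathrm{d}x$ for every $w\in W^{1,p^+}(U,\R^N)$ and every open $V'$. Second, $G''(\cdot,V')$ is lower semicontinuous with respect to $L^1$ convergence. Third, Corollary \ref{senzaLinfty} (or Corollary \ref{PAdensity}) provides smooth approximants in energy, since $\gamma$ is a convex $N$-function satisfying the reinforced-$\Delta_2$ property by Proposition \ref{gammaN}.

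The point requiring care is that Corollary \ref{senzaLinfty} needs a bounded Lipschitz domain, whereas $V$ is an arbitrary open set. I would therefore first work on Lipschitz subdomains and then pass to $V$ by inner regularity.

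\textbf{Step 1 (Lipschitz $V'$).} Let $V'\in\mathcal A_0(U)$. Since $u\in W_\gamma(U,\R^N)$ and $\gamma\in\Delta_2$, we have $\int_{V'}\gamma(Du)\,\mathrm{d}x<\infty$ and $u\in W^{1,1}(V',\R^N)$. Corollary \ref{senzaLinfty} then gives $u_n\in C^\infty(\overline{V'},\R^N)$ with $u_n\to u$ in $W^{1,1}(V')$ and $\int_{V'}\gamma(Du_n)\to\int_{V'}\gamma(Du)$. Since $V'$ is Lipschitz, each $u_n\in W^{1,\infty}(V')$ extends to a $W^{1,p^+}(U,\R^N)$ function. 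The value of $G''(\cdot,V')$ depends only on the restriction to $V'$, because the $\Gamma$-limsup is defined via $L^1(V')$ convergence. Applying \eqref{reprlimsup} on $V'$ and then lower semicontinuity,
\[
G''(u,V')\le\liminf_n G''(u_n,V')=\lim_n\int_{V'}\gamma(Du_n)\,\mathrm{d}x=\int_{V'}\gamma(Du)\,\mathrm{d}x .
\]

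\textbf{Step 2 (general $V$).} To pass from $V'$ to $V$ I would use the subadditivity \eqref{eq: subaddG''} together with density of Lipschitz sets. Take $V'\Subset V''\Subset V$ with $V''\in\mathcal A_0(U)$, and let $W=V\setminus \overline{K}$ for a compact $K\subset V'$. Then
\[
G''(u,V)\le G''(u,V'')+G''(u,W).
\]
The difficulty is that $W$ need not be Lipschitz, and $u$ need not belong to $W^{1,p^+}$ there, so Lemma \ref{innreg} is not directly available. I would handle this by applying Step 1 to an exhaustion. Choose Lipschitz sets $D_j\in\mathcal D$ with $D_j\Subset D_{j+1}$ and $\bigcup_j D_j=V$, and write $V$ as the union of $D_{j}$ and $V\setminus\overline{D_{j-2}}$. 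The remaining piece I would cover by finitely many Lipschitz annular sets where Step 1 applies, and bound $G''$ there by $\int_{V\setminus D_{j-2}}\gamma(Du)\,\mathrm{d}x$, which tends to $0$.

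A cleaner alternative, if available, is to note that Step 1 holds for every Lipschitz $V'\subset U$ and that the right-hand side is a measure. One then uses the subadditivity \eqref{eq: subaddG''} on an increasing union of sets in $\mathcal D$, with the remainder annuli decomposed into finitely many Lipschitz pieces, for instance unions of small cubes meeting $V\setminus \overline{D_{j}}$ and contained in $V$. Each such finite union of cubes is handled by Step 1 after slightly enlarging the cubes so that the union stays Lipschitz. This yields $G''(u,V)\le\int_{D_j}\gamma(Du)+\int_{V\setminus D_{j-1}}\gamma(Du)+o(1)$, and letting $j\to\infty$ gives \eqref{reprparziale}.

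The main obstacle is this inner-regularity and covering step for non-Lipschitz $V$, since full inner regularity of $G''(u,\cdot)$ is so far only proved for $W^{1,p^+}$ functions.
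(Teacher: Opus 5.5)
Your Step 1 is correct: approximation on a Lipschitz $V'$, extension to $W^{1,p^+}(U,\R^N)$, then \eqref{reprlimsup} and lower semicontinuity of $G''(\cdot,V')$. Step 2, however, contains a genuine gap that the proposed covering cannot close.

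The remainder $V\setminus\overline{D_{j-2}}$ is not relatively compact in $V$: every point of $\partial V$ lies in its closure. So it cannot be covered by finitely many annuli $D_{i+1}\setminus\overline{D_{i-1}}$, nor by finitely many cubes contained in $V$, because any such finite union sits inside some $D_m\Subset V$.

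Suppose instead you let Lipschitz pieces $L_1,\dots,L_m\subseteq U$ leave $V$. Then monotonicity and Step 1 bound the remainder only by $\sum_i\int_{L_i}\gamma(Du)\,\mathrm{d}x$. Since $\bigcup_i\overline{L_i}\supseteq\partial V\cap U$ and $|\partial L_i|=0$, this sum is at least $\int_{\partial V\cap U}\gamma(Du)\,\mathrm{d}x$. That quantity does not tend to $0$ when $\partial V\cap U$ has positive measure and $Du\neq 0$ on a non-negligible part of it; take for instance $V=U\setminus C$ with $C$ closed, of positive measure and empty interior.

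Using infinitely many pieces would require countable subadditivity or inner regularity of $G''(u,\cdot)$. This is not available for $u\notin W^{1,p^+}$. In Lemma \ref{innreg} it comes from an upper bound for $G''(u,\cdot)$ on arbitrary open sets, and for $u\in W_\gamma(U,\R^N)$ that bound is exactly \eqref{reprparziale}. So the argument would be circular.

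The missing idea is that no Lipschitz subdomains of $V$ are needed. Apply Corollary \ref{senzaLinfty} once, on $U$ itself, which is Lipschitz and satisfies $\int_U\gamma(Du)\,\mathrm{d}x<\infty$. This gives $u_n\in C^\infty(\overline U,\R^N)\subset W^{1,p^+}(U,\R^N)$ with $u_n\to u$ in $W^{1,1}(U,\R^N)$ and $\int_U\gamma(Du_n)\,\mathrm{d}x\to\int_U\gamma(Du)\,\mathrm{d}x$.

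By continuity of $\gamma$, $\gamma(Du_n)\to\gamma(Du)$ in measure. For nonnegative functions, this together with convergence of the integrals gives $\gamma(Du_n)\to\gamma(Du)$ in $L^1(U)$, which is how the paper localizes. Equivalently, Fatou's lemma on $U\setminus V$ gives
\[
\limsup_{n}\int_V\gamma(Du_n)\,\mathrm{d}x=\int_U\gamma(Du)\,\mathrm{d}x-\liminf_{n}\int_{U\setminus V}\gamma(Du_n)\,\mathrm{d}x\le\int_V\gamma(Du)\,\mathrm{d}x .
\]
Then, for every open $V\subseteq U$, lower semicontinuity and \eqref{reprlimsup} yield $G''(u,V)\le\liminf_n\int_V\gamma(Du_n)\,\mathrm{d}x\le\int_V\gamma(Du)\,\mathrm{d}x$. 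This needs no regularity of $V$, no extension operator and no inner regularity.
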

    \begin{proof}
Let $u\in W_{\gamma}({U },\R^N)$. Thanks to Proposition \ref{gammaN}, we have that   $\gamma$ satisfies the reinforced-$\Delta_2$ property. Hence,  by applying Corollary \ref{senzaLinfty}, there exists a sequence $\{u_k\}_k\subseteq  C^{\infty}(\overline {U },\R^N)$ such that $u_k\to u$ in  $W^{1,1}({U },\R^N)$ and  
\begin{equation}\label{eq: L1}
\int_{{U }} \gamma (Du_k) \mathrm{d}x \to  \int_{{U }} \gamma (Du) \mathrm{d}x,
\end{equation}
as $k\to +\infty$.
We claim that $$\int_{V} \gamma (Du_k) \mathrm{d}x \to  \int_{V} \gamma (Du) \mathrm{d}x$$ for any $V\in \mathcal A({U })$.
Indeed, since $\gamma$ is continuous, we have that  $\gamma(Du_k)\to \gamma(Du)$ in measure. Since  \eqref{eq: L1} reads as $\|\gamma(Du_k)\|_{L^1({U })}\to \|\gamma(Du)\|_{L^1({U })}$, by a standard result in measure theory (see, for instance, \cite[Proposition 1.33]{AFP}),  it follows that  $\|\gamma(Du_k)-\gamma(Du)\|_{L^1({U })}\to 0$. This implies the claim. Finally, since $G''$  is lower semicontinuous w.r.t.\ the $L^1$-convergence and admits the representation formula \eqref{reprlimsup} in $W^{1,p^+}({U },\R^N)$, we get
 \[G''(u,V) \leq \liminf_{k\to +\infty}  G''( u_k,V)=\liminf_{k\to +\infty} \int_{V} \gamma (Du_k) \mathrm{d}x= \int_{V} \gamma (Du) \mathrm{d}x. \] \end{proof}
 
In the next lemma, periodicity of $g(\cdot, s)$ has a key role, as it entails translation invariance of $G'$, which is crucial for the proof below.
  \begin{lemma}\label{lowerbound}
Assume \eqref{epsk} and let  $\gamma$ be  defined by \eqref{blowupgamma}. Then, it holds
  \begin{equation}\label{lowbound}G'(u,V)\geq \int_V \gamma(Du(x))\,\mathrm{d}x\qquad  \forall\, u\in W^{1,1}({U },\R^{ N}), \forall\, V\in \mathcal \A({U }).
  \end{equation}

\end{lemma}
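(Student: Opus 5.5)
We argue by approximation of $G'(u,\cdot)$ through its blow-up at Lebesgue points, comparing with affine maps via translation invariance. Fix $u\in W^{1,1}(U,\R^N)$ and $V\in\A(U)$ with $G'(u,V)<+\infty$; otherwise there is nothing to prove. By Lemma \ref{boundsp+} we have $Du\in L^{p^-}(V)$. Since $G'(u,\cdot)$ is increasing and superadditive on disjoint open sets (Lemma \ref{subadditivity}), the standard construction gives a Borel measure $\mu\le$ the outer extension of $G'(u,\cdot)$, with $\mu(V)\le G'(u,V)$ and $\mu(Q_r(x))\ge G'(u,Q_r(x))$ on cubes; this is the usual De Giorgi--Letta type argument restricted to superadditivity. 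It then suffices to prove
\[
\liminf_{r\to0^+}\frac{G'(u,Q_r(x_0))}{r^d}\ \ge\ \gamma(Du(x_0))
\]
for a.e. $x_0\in V$. Together with a Vitali covering by disjoint cubes inside $V$ and superadditivity, this yields $G'(u,V)\ge\int_V\gamma(Du)\,dx$.

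For the blow-up we fix $x_0$, a Lebesgue point of $Du$ and a point of $L^1$-differentiability of $u$, and set $\Sigma=Du(x_0)$. We take $r_j\to0$ and, for each $j$, a sequence $u_k$ with $u_k\to u$ in $L^1(Q_{r_j}(x_0))$ and $\liminf_k G_{\varepsilon_k}(u_k,Q_{r_j}(x_0))\le G'(u,Q_{r_j}(x_0))+r_j^{d+1}$. Periodicity of $g(\cdot,s)$ is used here: we replace $x_0$ by $\varepsilon_k\lfloor x_0/\varepsilon_k\rfloor$, which moves the cube by at most $\sqrt d\,\varepsilon_k$, an error absorbed by a slightly larger or smaller cube. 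This makes $G'(\cdot,Q_r(x_0))$ translation invariant, namely $G'(u(\cdot+x_0),Q_r(0))=G'(u,Q_r(x_0))$. We then rescale: $w(y)=\frac{u(x_0+ry)-u(x_0)}{r}$ on $Q_1$. The rescaled functionals are again of the form $G_{\varepsilon_k/r}$, and $w_r\to u_\Sigma$ in $L^1(Q_1)$. A diagonal argument in $(r_j,k)$ produces a sequence $z_j\to u_\Sigma$ in $L^1(Q_1)$ with energies $G_{\eta_j}(z_j,Q_1)$, $\eta_j\to0$, bounded by $\liminf_r G'(u,Q_r(x_0))/r^d+o(1)$.

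It remains to show $\liminf_j G_{\eta_j}(z_j,Q_1)\ge\gamma(\Sigma)$ for an arbitrary sequence $\eta_j\to0$ and not only for $\varepsilon_k$. This is the main obstacle, since $\gamma$ was identified along $\varepsilon_k$. We plan to modify the boundary values: by the truncation of Lemma \ref{tronc} (after subtracting $u_\Sigma$; since $g$ is radially increasing and Lemma \ref{tronc}(v) holds, the energy does not increase up to a vanishing error) and the fundamental estimate Lemma \ref{fundest}, we glue $z_j$ to $u_\Sigma$ near $\partial Q_\theta$. The remainder term $M_\sigma\int|z_j^M-u_\Sigma|^{p^+}$ tends to $0$, and the boundary layer costs at most $\beta\max\{|\Sigma|^{p^-},|\Sigma|^{p^+}\}(1-\theta^d)$. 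This gives $\liminf_j G_{\eta_j}(z_j,Q_1)\ge\liminf_j m_{G_{\eta_j}}(u_\Sigma,Q_\theta)-o_\theta(1)$. By periodicity and the cell formula \eqref{fhom}, or equivalently the asymptotic formula for $\lim_{T\to\infty}T^{-d}m_{G_1}(u_\Sigma,Q_T)$, whose existence follows from subadditivity and periodicity as in \cite[Chapter 14]{BDF}, the limit of $m_{G_\eta}(u_\Sigma,Q_\theta)/\theta^d$ as $\eta\to0$ exists and is independent of the sequence. By Proposition \ref{G''min}(2) applied along $\varepsilon_k$, this limit equals $\gamma(\Sigma)$. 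Letting $\theta\to1^-$ concludes the proof.
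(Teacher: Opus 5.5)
Your argument is correct in its main line, but it follows a genuinely different route from the paper's. The paper relies on convexity. Periodicity gives $G'(\tau_y u,D)\le G'(u,V)$ whenever $D\Subset V+y$. For $D\in\mathcal D$ the functional $G'(\cdot,D)=G''(\cdot,D)$ is convex and $L^1$-lower semicontinuous, so Jensen's inequality with mollifiers yields $G'(\rho_j*u,D)\le G'(u,V)$. Since $\rho_j*u$ is smooth on $D$, \eqref{epsk} and \eqref{reprlimsup} turn the left-hand side into $\int_D\gamma(D(\rho_j*u))\,\mathrm{d}x$, and lower semicontinuity as $j\to\infty$ concludes. No blow-up, covering argument or cell problem enters.

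You instead do three things: you run a Fonseca--M\"uller blow-up, you reduce to Dirichlet problems on a fixed cube for an arbitrary sequence $\eta_j\to0$, and you identify the subadditive limit $\psi(\Sigma)=\lim_{t\to+\infty}m_{G_1}(u_\Sigma,tQ)/|tQ|$ with $\gamma$. The identification goes through Proposition \ref{G''min}(2) and the scaling $m_{G_\varepsilon}(u_\Sigma,Q_r(x))=\varepsilon^d m_{G_1}(u_\Sigma,\varepsilon^{-1}Q_r(x))$. This is heavier, but it uses only $\Z^d$-periodicity of the integrand: no convexity of $G'$ in $u$ and no invariance under real shifts. It is precisely the deterministic version of what the paper itself does in Section \ref{stochastic} (Propositions \ref{prop: omegaliminf} and \ref{gamma=varphi}), where the mollification trick is unavailable. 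As a byproduct it identifies $\gamma$ with the asymptotic Dirichlet formula.

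A few points to tighten:
\begin{enumerate}
\item Do not appeal to \eqref{fhom}. Lemma \ref{cell} rests on Lemma \ref{estratte}, and hence on the inequality being proved. You only need the existence of $\psi$ by subadditivity and periodicity (the deterministic analogue of Lemma \ref{lemma genstochastic}), and Proposition \ref{G''min} does not depend on the present lemma.
\item Drop the measure $\mu$. A De Giorgi--Letta construction would need subadditivity and inner regularity of $G'(u,\cdot)$, which are unavailable for general $u\in W^{1,1}$. Vitali's covering theorem with the finite superadditivity of Lemma \ref{subadditivity} already closes the argument.
\item The exact identity $G'(u(\cdot+x_0),Q_r(0))=G'(u,Q_r(x_0))$ is not justified. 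You only use comparisons with slightly shrunk cubes centred at $\varepsilon_k\lfloor x_0/\varepsilon_k\rfloor$, and that is fine.
\item Truncate $z_j$ itself at a level $M>\sup_{Q_1}|u_\Sigma|$, as in Lemma \ref{cell}; this gives $|Dz_j^M|\le|Dz_j|$ outright. If you truncate $z_j-u_\Sigma$, Lemma \ref{tronc}(v) only yields $|Dz_j^M-\Sigma|\le|Dz_j-\Sigma|$. You would then need \eqref{cons2} to bound the energy by $(1+\delta)^{p^+}G_{\eta_j}(z_j,Q_1)$ plus a vanishing error.
\end{enumerate}
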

  \begin{proof} 
 Without loss of generality, we can assume that $G'(u,V)<+\infty$. Following the lines of \cite[Theorem 24.1]{DM} and thanks to   a translation argument,  it  follows   that,  for every $y\in \mathbb R^d$ and for every $V,D\in \mathcal A({U })$,  if $D\Subset V+y$ then $$ G'(\tau_y u, D)\leq  G'( u, V) $$ where 
  $\tau_y u(x)=u(x-y)$.
Since $g(x,\cdot) $ is a convex function for a.e. $x\in \R^d$, we have that $G'$ is a convex functional.   Then, proceeding as in Step 1 of \cite[Theorem 14.8]{BDF}, let  $V'\Subset V$ and let $D\in \mathcal D$ such that  $V'\Subset D \Subset V$ 
 and let  $\{\rho_j\}_j$   be a sequence of mollifiers with compact support contained in $B\left(0,\frac 1 j\right)$.  Using Jensen’s inequality,  for $j$ large enough as to have $D\Subset V+y$  for all
 $y\in B(0,\frac 1 j)$, we get that
\begin{equation*}\begin{split}  G'(\rho_j*u,D)&=  G'\left(\int_{B(0,\frac 1 j)} \rho_j(y) u(\cdot-y) \,\mathrm{d}y, D\right)\leq \int_{B\left(0,\frac 1 j\right)} \rho_j(y)  G'(u(\cdot-y),  D)\,\mathrm{d}y\\
&=\int_{B(0,\frac 1 j)} \rho_j(y)  G'(\tau_y u,  D)\,\mathrm{d}y \leq \int_{B\left(0,\frac 1 j\right)} \rho_j(y)  G'(u,  V) \,\mathrm{d}y= G'(u,  V).
 \end{split}
 \end{equation*}
Since $\rho_j*u\in W^{1,p^+}(D, \mathbb R^N)$,  by \eqref{epsk}, we have that 
$$ \int_{V'} \gamma(D\rho_j*u) \mathrm{d}x\leq \int_{D} \gamma(D\rho_j*u) \mathrm{d}x =G''(\rho_j*u,D)=G'(\rho_j*u,D)  \leq G'(u,  V).$$
By using the lower semicontinuity,  when $j\to +\infty$ we get
$$ \int_{V'} \gamma(Du) \mathrm{d}x\leq  G'(u,V) \qquad \forall V'\Subset V.$$
By the arbitrariness of $V'\Subset V$,  we obtain  \eqref{lowbound}.
  \end{proof}

We are now ready to prove the $\Gamma$-convergence of $G_{\varepsilon}$ along a  subsequence satisfying \eqref{epsk}. The need for passing to a subsequence will be removed owing to the Lemma \ref{cell}.

\begin{lemma}\label{estratte}
      Let $\varepsilon_k$ be as in \eqref{epsk} and let  $\gamma$ be defined by \eqref{blowupgamma}.  Then 
  \begin{equation}\label{gammalimlemma}
     G'(u,V) = G''(u, V) = \int_{V} \gamma (Du)\,\mathrm{d}x
  \end{equation}     
for every $V\in\A({U })$ and every $u\in  W_{\gamma}({U },\R^N)$.
Furthermore, if $u\in L^1({U },\R^N)\setminus W_\gamma ({U },\R^N)$, then \begin{equation}\label{gammainf}
    \Gamma\left(L^1\right)\hbox{-}\lim_{k\to\infty}G_{\varepsilon_{k}}(u,{U }) = +\infty.
\end{equation} 
In particular, the sequence $\{G_{\varepsilon_{k}}\}$ $\Gamma\left(L^1\right)\hbox{-}$ converges to the functional
 \begin{align*}
G(u):=
\left\{
\begin{array}{l}
\displaystyle\int_{{U }} \gamma(D u)\,\mathrm{d}x\quad \mbox{if }u\in W_{\gamma}({U },\R^N), \\
+\infty \quad \mbox{elsewhere in }L^1({U },\R^N),
\end{array}.
\right.   
\end{align*}
\end{lemma}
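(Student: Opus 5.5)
The first claim \eqref{gammalimlemma} follows by sandwiching. Fix $u\in W_\gamma(U,\R^N)$ and $V\in\A(U)$. Since $W_\gamma(U,\R^N)\subseteq W^{1,1}(U,\R^N)$ by Remark \ref{embeddingMusielak}, Lemma \ref{lowerbound} gives
\[
\int_V\gamma(Du)\,\mathrm{d}x\le G'(u,V).
\]
Corollary \ref{necessario} gives
\[
G''(u,V)\le\int_V\gamma(Du)\,\mathrm{d}x.
\]
Combined with the trivial inequality $G'\le G''$, this yields \eqref{gammalimlemma} for every $V$. No density or truncation step is needed here, since both one-sided estimates were already proved on the whole class $W_\gamma$.

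For \eqref{gammainf}, take $u\in L^1(U,\R^N)\setminus W_\gamma(U,\R^N)$. It suffices to show $G'(u,U)=+\infty$. Argue by contradiction and suppose $G'(u,U)<+\infty$. Then Lemma \ref{boundsp+}, Part 2, gives $Du\in L^{p^-}(U)$, so $u\in W^{1,1}(U,\R^N)$. Lemma \ref{lowerbound} then gives
\[
\int_U\gamma(Du)\,\mathrm{d}x\le G'(u,U)<+\infty,
\]
so $Du$ belongs to the Musielak--Orlicz class $\mathcal L_\gamma$. Since $\gamma$ is an $N$-function satisfying $\Delta_2$ (Proposition \ref{gammaN} and the lemma after Definition \ref{strongdelta2}), Theorem \ref{musielakbanach} identifies $\mathcal L_\gamma$ with $L_\gamma$. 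Hence $u\in W_\gamma(U,\R^N)$, which is a contradiction. It follows that $G''(u,U)\ge G'(u,U)=+\infty$, and the $\Gamma$-limit equals $+\infty$.

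The final statement follows by taking $V=U$ in \eqref{gammalimlemma} and combining it with \eqref{gammainf}. This gives $G'(\cdot,U)=G''(\cdot,U)=G$ on all of $L^1(U,\R^N)$, which is exactly $\Gamma(L^1)$-convergence of $G_{\varepsilon_k}$ to $G$ by the remark in Section \ref{notaz}.

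The only delicate point is the membership argument in the second step: finiteness of $\int_U\gamma(Du)$ must be recognized as membership in the space $W_\gamma$. This requires two things. First, the lower bound of Lemma \ref{lowerbound} must hold for all $W^{1,1}$ functions, which is why that lemma is stated in that generality. Second, the $\Delta_2$ property is needed to identify the class $\mathcal L_\gamma$ with the space $L_\gamma$.
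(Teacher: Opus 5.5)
Your proof is correct and follows the paper's argument: the sandwich via Corollary \ref{necessario} and Lemma \ref{lowerbound} gives \eqref{gammalimlemma}, and off $W_\gamma$ you get $G'(u,U)=+\infty$ from \eqref{bassop-} and Lemma \ref{lowerbound}. The paper splits into the cases $u\notin W^{1,p^-}$ and $u\in W^{1,p^-}\setminus W_\gamma$ rather than arguing by contradiction, and your closing remark misplaces where $\Delta_2$ enters: the inclusion $\mathcal L_\gamma\subseteq L_\gamma$ used in your second step holds by definition, and $\Delta_2$ is only needed for the reverse inclusion $L_\gamma\subseteq\mathcal L_\gamma$ (finiteness of $\int_U\gamma(Du)$ for every $u\in W_\gamma$), which is used inside Corollary \ref{necessario}.
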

\begin{proof}
Assume $u\in  W_{\gamma}({U },\R^N)$. By \eqref{reprparziale} and \eqref{lowbound}, along the subsequence $\varepsilon_{k}$ given by  \eqref{epsk},  we have
    \begin{equation*}
        \int_V\gamma(Du)\,\mathrm{d}x\le G'(u,V)\le G''(u,V)\le \int_V\gamma(Du)\,\mathrm{d}x,
    \end{equation*}
    for every $V\in\A({U })$. This proves \eqref{gammalimlemma}.
Assume  $u\in L^1({U },\R^N)\setminus W_\gamma ({U },\R^N)$. To prove \eqref{gammainf}, it suffices to show that $G'(u,{U })=+\infty$. If $u\in L^1({U },\R^N)\setminus  W^{1,p^-}({U },\R^N)$, 
\eqref{bassop-}  immediately gives that $ G'(u,{U })=+\infty$.  Finally, if  $u \in W^{1,p^-}({U },\R^N)\setminus W_\gamma({U }, \R^N)$, by Lemma \ref{lowerbound} we get
\begin{equation*}
    +\infty=\int_{U } \gamma(Du)\,\mathrm{d}x\le G'(u,{U })
\end{equation*}
and we conclude.
\end{proof}

We are now in a position to prove the cell formula \eqref{fhom}.

\begin{lemma}[Cell formula]\label{cell} The function $\gamma$ given by \eqref{blowupgamma}   satisfies the cell problem formula
\begin{align}\label{reprgamma}
\gamma(\Sigma)=\min_{u\in W_{{\rm per}}^{1,p^-}({(0,1)^d})} \int_{{(0,1)^d}}  g(x, \Sigma+  D u(x))  \, \mathrm{d}x .   
\end{align}
\end{lemma}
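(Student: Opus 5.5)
Denote by $\gamma_{\rm cell}(\Sigma)$ the right-hand side of \eqref{reprgamma}. The infimum is attained, by coercivity of order $p^-$ and weak lower semicontinuity of the convex integrand. The plan is to prove $\gamma=\gamma_{\rm cell}$ through the asymptotic characterization of Proposition \ref{G''min}. Since $\gamma$ no longer depends on $x$ (Proposition \ref{gammaN}), \eqref{eq:convminima} pins $\gamma(\Sigma)$ between two limits of rescaled Dirichlet minima $m_{G_{\varepsilon_k}}(u_\Sigma,Q_\rho(x))/\rho^d$. It therefore suffices to show that, for every cube, these Dirichlet minima converge to $\gamma_{\rm cell}(\Sigma)$ as $k\to\infty$.

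\textbf{Upper bound.} Fix a periodic competitor $w\in W^{1,p^-}_{\rm per}$ with $\int_{(0,1)^d}g(y,\Sigma+Dw)\,\mathrm{d}y\le\gamma_{\rm cell}(\Sigma)+\sigma$. I would first reduce to $w$ smooth, or at least with $Dw\in L^{p^+}$. This uses convexity as in Theorem \ref{approximgamma}: periodic mollification together with Jensen's inequality controls $\int g(y,t(\Sigma+Dw_j))$ for $t<1$. Periodicity in $y$ of $g$ is not preserved under translation in a pointwise sense, so I would instead use the bound $\int g(y,\Sigma+Dw(\cdot-z))\,\mathrm{d}y$ together with \eqref{cons2} and the scaling $t\to1$. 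Then set $w_k(x)=u_\Sigma(x)+\varepsilon_k w(x/\varepsilon_k)$. This sequence converges to $u_\Sigma$ in $L^1$, and by the Riemann--Lebesgue lemma for periodic functions $G_{\varepsilon_k}(w_k,V)\to|V|\int_{(0,1)^d}g(y,\Sigma+Dw)\,\mathrm{d}y$. Hence $G''(u_\Sigma,V)\le|V|(\gamma_{\rm cell}(\Sigma)+\sigma)$, which gives $\gamma\le\gamma_{\rm cell}$ directly from \eqref{reprlimsup}. No boundary conditions are needed here.

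\textbf{Lower bound.} Take $Q=Q_1(0)$ and a competitor $v$ for $m_{G_{\varepsilon_k}}(u_\Sigma,Q)$ with $v-u_\Sigma$ compactly supported in $Q$. With $n=\lfloor 1/\varepsilon_k\rfloor$, rescale by $y=x/\varepsilon_k$. The function $\phi(y)=(v(\varepsilon_k y)-\Sigma\varepsilon_k y)/\varepsilon_k$ is compactly supported in $Q_{1/\varepsilon_k}$, which is contained in a cube of integer side $n+1$ up to a layer of relative measure $O(\varepsilon_k)$ on which $\phi=0$. Extend $\phi$ periodically with period $n+1$. The standard averaging argument then applies: $\bar\phi(y)=\frac{1}{(n+1)^d}\sum_{z\in\{0,\dots,n\}^d}\phi(y+z)$ is $1$-periodic, and by convexity of $g(y,\cdot)$ and $1$-periodicity of $g$ in $y$ one gets $\int_{(0,1)^d}g(y,\Sigma+D\bar\phi)\le (n+1)^{-d}\int_{Q_{n+1}}g(y,\Sigma+D\phi)$. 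The right-hand side equals $\varepsilon_k^d(n+1)^{-d}\bigl[G_{\varepsilon_k}(v,Q)+O(\varepsilon_k)\beta\max\{|\Sigma|^{p^-},|\Sigma|^{p^+}\}\bigr]$. Hence $\gamma_{\rm cell}(\Sigma)\le\liminf_k m_{G_{\varepsilon_k}}(u_\Sigma,Q_\rho)/\rho^d$. The translation-and-scaling invariance of the integer lattice structure makes this independent of $x,\rho$, after centering and rounding to integer multiples. Combined with the second inequality in \eqref{eq:convminima}, this yields $\gamma\ge\gamma_{\rm cell}$.

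The main obstacle is the upper bound's approximation step. A minimizer of the cell problem only has $\int g(y,\Sigma+Dw)<\infty$, so its gradient need not lie in $L^{p^+}$. Justifying the limit of the oscillating integrals therefore requires either density of smooth periodic functions in energy or a direct Riemann--Lebesgue argument for $g(x/\varepsilon,\Sigma+Dw(x/\varepsilon))$. The latter is actually immediate, since $y\mapsto g(y,\Sigma+Dw(y))$ is an $L^1$ periodic function, and its rescalings converge weakly to the mean. I would therefore use that directly and avoid mollification altogether. Finally, independence of the subsequence follows since $\gamma_{\rm cell}$ does not depend on $\varepsilon_k$.
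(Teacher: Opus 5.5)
Your argument is correct. The inequality $\gamma\le\gamma_{\rm cell}$ is proved as in the paper: you use oscillating periodic competitors $u_\Sigma+\varepsilon_k w(\cdot/\varepsilon_k)$ and Riemann--Lebesgue for the $L^1$ periodic function $y\mapsto g(y,\Sigma+Dw(y))$. You pass through $G''$ and \eqref{reprlimsup}, while the paper passes through $G'=G''$ from Lemma \ref{estratte}; either works, and you are right that no mollification is needed.

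The reverse inequality follows a genuinely different route. The paper works directly with a recovery sequence for $u_\Sigma$ on the unit cube. It truncates the sequence at a level $M>\sqrt d|\Sigma|$ via Lemma \ref{tronc} to get $L^{p^+}$ convergence, and glues it to $u_\Sigma$ near the boundary with the fundamental estimate of Lemma \ref{fundest}. It then rescales to $(0,\varepsilon_k^{-1})^d$, pads by zero to $(0,J_k)^d$, and uses the identity $h_{J_k}=h_1$ quoted from the literature. You instead bound the Dirichlet minima from below on every cube by the averaging argument, which proves the needed half of $h_J=h_1$ rather than citing it. You then transfer this bound to $\gamma$ through Proposition \ref{G''min}, whose proof already contains the same truncation-plus-gluing step.

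Your route is shorter once Proposition \ref{G''min} is available, and it mirrors what the paper itself does in the stochastic case (Proposition \ref{gamma=varphi}). The paper's route keeps the cell formula independent of the heavier machinery behind \eqref{eq:cellformula1}: Lemma \ref{G''min-aux}, the continuity property \eqref{mdelta}, and the blow-up lemmas from the literature.

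Two slips need fixing, plus one simplification.
\begin{itemize}
\item You need the \emph{first} inequality in \eqref{eq:convminima}, namely $\limsup_{r\to0^+}\limsup_{k}m_{G_{\varepsilon_k}}(u_\Sigma,Q_r(x))/|Q_r(x)|\le\gamma(x,\Sigma)$. Combine it with your bound $\gamma_{\rm cell}(\Sigma)\le\liminf_k m_{G_{\varepsilon_k}}(u_\Sigma,Q_r(x))/|Q_r(x)|$. The second inequality bounds $\gamma$ from above and gives nothing here.
\item After the change of variables the prefactor is $\varepsilon_k^{-d}(n+1)^{-d}$, which tends to $1$, not $\varepsilon_k^{d}(n+1)^{-d}$.
\item No centering or rounding is needed for general cubes $Q_r(x)$. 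Periodizing with any integer period $L\ge r/\varepsilon_k$ works wherever the rescaled cube sits, because $y\mapsto g(y,\Sigma+D\phi(y))$ is then $L$-periodic.
\end{itemize}
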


\begin{proof}
This is a standard argument in convex periodic homogenization; we give however details as some adaptations are needed for the situation at hand.
We set 
\[
h_J(\Sigma)=\frac1{J^d}\min_{u\in W_{\rm{per}}^{1,p^-}({(0,J)^d})} \int_{{(0,J)^d}} g(x, \Sigma+  D u(x))   \, \mathrm{d}x .
\]
for each $J\in \mathbb{N}$. As the function $(x, \Sigma)\mapsto g(x, \Sigma) $ is convex in $\Sigma$ and $1$-periodic in $x$, we have that, actually, $h_1(\Sigma)=h_J(\Sigma)$ for all $J\in \N$ (see \cite[page 115]{BDF}).
Now, if $u \in W_{{\rm per}}^{1,p^-}({(0,1)^d}, \mathbb R^N)$, we can extend it by periodicity to the whole $\mathbb{R}^d$. Set $v_{k}=\varepsilon_k u\left(\frac x{\varepsilon_k}\right)$, we have that  $v_{k}\to 0$ in $L^1({(0,1)^d}, \mathbb R^N)$ and, by applying Lemma  \ref{estratte},   possibly along a subsequence, we get that, for every $\Sigma\in {\bf M}^{N\times d}$,
\begin{align*}
\gamma(\Sigma)&=\int_{(0,1)^d} \gamma(\Sigma)\,\mathrm{d}x=G''(u_{\Sigma},(0,1)^d)=G'(u_{\Sigma},(0,1)^d)\le \liminf_{k\to +\infty} \int_{(0,1)^d} g\left(\frac x{\varepsilon_k}, \Sigma+ Dv_k\right)\,\mathrm{d}x\\
&=\liminf_{k\to +\infty} \int_{{(0,1)^d}}  g\left(\frac x{\varepsilon_k}, \Sigma+  D u\left(\frac x{\varepsilon_k}\right)\right)\,\mathrm{d}x=\int_{{(0,1)^d}}  g(x, \Sigma+  D u(x))   \, \mathrm{d}x \,,
\end{align*}
where the last limit follows by applying the Riemann-Lebesgue Lemma to the periodic function $x\to  g(x, \Sigma+  D u(x))$. This proves $\gamma(\Sigma)\le h_1(\Sigma)$, as $u\in W_{{\rm per}}^{1,p^-}({(0,1)^d}, \mathbb R^N)$ is arbitrary.

For the converse inequality, let $\Sigma\in {\bf M}^{N\times d}$ and let $\{u_{k}\}$ be a recovery sequence for $u_\Sigma$ on the unit square. Fix $M>\sqrt{d}\,|\Sigma|$ and observe that the vector-valued smooth truncations $u^M_{k}$, given by Lemma \ref{tronc}, satisfy 
\[ \|u_{k}^{M} - u_\Sigma\|_{L^{1}((0,1)^d,\mathbb R^N)} = \|u_{k}^{M} - u_\Sigma^{M}\|_{L^{1}((0,1)^d,\mathbb R^N)}\leq  \|u_{k} - u_\Sigma\|_{L^{1}((0,1)^d,\mathbb R^N)}\]
that implies $\|u_{k}^{M} - u_\Sigma\|_{L^{1}((0,1)^d,\mathbb R^N)} \to 0$ as $k\to +\infty$. With this, since $|Du^M_k| \le |Du_k|$ a.e. and  $g(x, \cdot)$ is radially increasing, we have
\[\begin{split}\gamma(\Sigma)&\leq \liminf_{k\to +\infty} \int_{(0,1)^d}g\left(\frac x{\varepsilon_k},  Du^M_{k}\right)\,\mathrm{d}x\\
& \leq 
\limsup_{k\to +\infty} \int_{(0,1)^d}g\left(\frac x{\varepsilon_k},  Du^M_{k}\right)\,\mathrm{d}x \le \limsup_{k\to +\infty} \int_{(0,1)^d}g\left(\frac x{\varepsilon_k},  Du_{k}\right)\,\mathrm{d}x  =\gamma(\Sigma)\,.
\end{split}
\]
Hence, $\{u_{k}^{M}\}_k$  is itself  a recovery sequence. Without loss of generality, we can then assume that $\|u_{k}\|_{L^\infty((0,1)^d)}\le M$, so that the functions $u_{k}-u_\Sigma$ converge to $0$ in $L^{p^+}({(0,1)^d})$.  With this, for given $\sigma>0$ and $0<\delta<1$, taking the convex combination \[z_{k}(x):=\varphi_k(x)u_{k}(x)+(1-\varphi_k(x))u_\Sigma(x)\,,\]  where $\varphi_k$ are suitable cut-off functions between $V'={(0,1-\delta)^d}$ and $V={(0,1)^d}$, by using the $L^{p^+}$ fundamental estimate of Lemma~\ref{fundest} and \eqref{growth-p-p+}, we get
\[
\limsup_{k\to\infty}\int_{(0,1)^d} g\left(\frac{x}{\varepsilon_k}, Dz_k\right)\,\mathrm{d}x
\le (1+\sigma)\left(\gamma(\Sigma)
+\left(1-(1-\delta)^d\right)\max\{|\Sigma|^{p^-},|\Sigma|^{p^+}\}\right)
+\sigma .
\]
Equivalently,
\[
\limsup_{k\to\infty}\varepsilon_k^d\int_{{\left(0,{\varepsilon_k}^{-1}\right)^d}}g(y, \Sigma +D v_{k}(y))\,\mathrm{d}y
\le  (1+\sigma)\left(\gamma(\Sigma)
+\left(1-(1-\delta)^d\right)\max\{|\Sigma|^{p^-},|\Sigma|^{p^+}\}\right)
+\sigma,
\]
where we set
\[
v_{k}(y)
:= \varepsilon_k z_{k}\!\left(\frac y{\varepsilon_k}\right) - u_\Sigma(y)
\in W^{1, p^-}_0\!\left({\left(0,{\varepsilon_k}^{-1}\right)^d}\right).
\]
Let $J_k$ be the smallest integer greater or equal to $\frac{1}{\varepsilon_k}$,
and extend $v_{k}$ by   $0$ in order to get  a function belonging to  $
W^{1,p^-}_0({(0,J_k)^d}) \subset W^{1,p^-}_{\mathrm{per}}({(0,J_k)^d}).$  Using \eqref{growth-p-p+} to estimate the integral of $g(\cdot,\Sigma)$ on   $ (0,J_k)^d\setminus (0,\varepsilon_k^{-1})^d$,  we get
\[
h_{J_k}(\Sigma)\le (1+\sigma)\gamma(\Sigma)+\sigma
+\left((1+\sigma)\left(1-(1-\delta)^d\right)
+ \frac{J_k^d-\varepsilon_k^{-d}}{J_k^d}\right)
\max\{|\Sigma|^{p^-},|\Sigma|^{p^+}\},
\]
so that
\[
h_1(\Sigma)=\limsup_{k\to +\infty} h_{J_k}(\Sigma)
\le  (1+\sigma)\gamma(\Sigma)
+(1+\sigma)\left(1-(1-\delta)^d\right)\max\{|\Sigma|^{p^-},|\Sigma|^{p^+}\}
+\sigma,
\]
which concludes the proof by arbitrariness of $\sigma$ and $\delta$.
\end{proof}

\begin{proof}[Proof of Theorem \ref{gammamodel}]
 We note that, for any sequence $\{\varepsilon_k\}$ converging to $0$, we can estract a subsequence $\varepsilon_{k_j}$ such that  \eqref{epsk} holds and we can apply Lemma \ref{estratte}.
As the cell formula \eqref{reprgamma}  ensures that  $\gamma$  does not depend on the chosen subsequence,   the Urysohn property of $\Gamma$-convergence entails that the whole  family $\{G_\varepsilon\}$
$\Gamma$-converges to the function $G$ given by \eqref{model-limit}. Furthermore, repeating this argument on each Lipschitz subset of $U$, we deduce, in particular, that \eqref{epsk}  is satisfied by any subsequence $\varepsilon_k$ without further extraction. Hence \eqref{eq: local} follows again  by Lemma \ref{estratte}, formula  \eqref{gammalimlemma}.  This concludes the proof.
\end{proof}

\section{Non-convex  homogenization with generalized Orlicz growth}\label{nonconvexcase}
We now turn to the general case of possibly non-convex periodic bulk integrands $f=f(x,\Sigma)$, with  generalized Orlicz growth $g=g(x,\Sigma)$  as in Section \ref{convex homog}. The key idea is that, in this case, the function $\gamma$ given by \eqref{fhom} provides the growth condition for the $\Gamma$-limit, so that the space $W_\gamma({U },\R^N)$ is the effective domain of the limit energy, represented as an integral functional in this space. 
The starting point is to derive a representation result in the Banach space  $W_\psi({U },\R^N)$, generalizing \cite[Theorem 9.1]{BDF}.

\subsection{A representation result}
In this subsection, we state an integral representation for a functional $H:W_{\psi}({U }, \R^N)\times \A({U }) \to [0,+\infty)$ under some suitable assumptions on $H$, whenever $\psi\colon {\bf M}^{N\times d}\to [0,+\infty)$ is an $N$-function satisfying the reinforced-$\Delta_2$ property.
More precisely, we provide a statement analogous to \cite[Theorem 9.1]{BDF} in the setting of Musielak-Orlicz spaces. Note that the following result  is similar to   \cite[Proposition 4.9]{MM}. The regularity assumption on the set ${U }$ allows us to apply Corollary \ref{PAdensity} in place of the density property required by Mingione and Mucci. We give the proof since we are interested also to the case when the  assumption (H2)' replaces the stronger hypothesis (H2).

 In the following we require that 
 $H$ satisfies the following hypotheses:

\begin{enumerate}
 \item[(H1)]({\it locality})  \( H(\cdot, A) \) is local for every \( A \in \mathcal{A}({U }) \), that is, if \( u, v \in W_{\psi}({U } ,\R^N) \) satisfy \( u = v \ \mathcal{L}^d\text{-a.e. in } A \), then \( H(u, A) = H(v, A) \);
  \item[(H2)] ({\it measure property}) for any \( u \in W_{\psi}({U },\R^N) \),  \( H(u, \cdot) \) is a the restriction to $\mathcal{A}({U })$ of a Borel measure;
  \item[(H2)'] ({\it measure property}) for any \( u \in W^{1,\infty}({U },\R^N) \), \( H(u, \cdot) \) is a the restriction to $\mathcal{A}({U })$ of a Borel measure;
    \item[(H3)] ({\it growth condition}) for every \( u \in W_{\psi}({U },\R^N)  \) and every \( B \in \mathcal{A}({U }) \), it holds
  \[
  0  \leq H(u, B)
    \leq \beta \left( \int_B (1 + \psi( D u))\,\mathrm{d}x  \right);
  \]
\item[(H4)] ({\it translation invariance}) for every \( u \in W_{\psi}({U },\R^N)  \), every $c\in \R^d$ and every \( B \in \mathcal{A}({U }) \), it holds $H(u+c,B)=H(u,B)$.
  \item[(H5)] ({\it lower semicontinuity}) for any \( A \in \mathcal{A}({U }) \),  \( H(\cdot, A) \) is sequentially  lower semicontinuous with respect to the weak convergence of $W_{\psi}({U }, \R^N)$.

\end{enumerate}

\begin{thm}\label{repr}
    Let ${U }\subseteq \R^d$ be a bounded Lipschitz open set and let $\psi\colon {\bf M}^{N\times d}\to [0,+\infty)$ be an $N$-function satisfying the reinforced-$\Delta_2$ property.  Assume that $H\colon W_{\psi}({U },\R^{N})\times \A({U })\to [0,+\infty)$ satisfies \textnormal{(H1), (H2)', (H3), (H4), (H5)}. Then, there exists a Carathéodory function $h: {U }\times {\bf M}^{d\times N }\to [0,+\infty) $ such that 
\begin{enumerate}
    \item[(i)] $h(x,\cdot)$ is rank 1-convex for a.e. $x\in {U }$;
    \item[(ii)]  $h(x,\Sigma)\leq \beta(1+\psi(\Sigma))$ for a.e. $x\in {U }$ and for every $\Sigma\in {\bf M}^{d\times N }$;
    \item[(iii)]  for all piecewise affine function $u\in W^{1,\infty}({U },\R^{ N})$  and all $V\in\A({U })$ \begin{equation}\label{uguaPA}
    H(u,V)= \int_V h(x,D u(x))\,\mathrm{d}x\;
    \end{equation}
 \item[(iv)]  for all $u\in W_{\psi}({U },\R^{ N})$ and all $V\in\A({U })$ \begin{equation}\label{upperest}
    H(u,V)\leq \int_V h(x,D u(x))\,\mathrm{d}x\;
    \end{equation}
\end{enumerate}

Moreover, if $H$ satisfies \textnormal{(H2)}, then 
for all $u\in W_{\psi}({U },\R^{N})$ and all $V\in\A({U })$ \begin{equation}\label{repres}
    H(u,V)= \int_V h(x,D u(x))\,\mathrm{d}x.
        \end{equation}
\end{thm}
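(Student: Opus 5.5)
The plan follows the classical route of \cite[Theorem 9.1]{BDF} (and \cite[Proposition 4.9]{MM}), working first on Lipschitz functions, where (H2)' gives the measure property, and then extending by density through Corollary \ref{PAdensity}.

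\emph{Step 1: the density $h$.} For $\Sigma\in{\bf M}^{N\times d}$ let $u_\Sigma(y)=\Sigma y$, which lies in $W^{1,\infty}(U,\R^N)$. By (H2)' and (H3), $H(u_\Sigma,\cdot)$ extends to a Borel measure absolutely continuous with respect to $\mathcal L^d$, with $H(u_\Sigma,B)\le\beta(1+\psi(\Sigma))|B|$. We define
\[
h(x,\Sigma)=\limsup_{r\to0^+}\frac{H(u_\Sigma,B_r(x))}{|B_r(x)|},
\]
so that by Radon--Nikodym $h(\cdot,\Sigma)$ is a density of $H(u_\Sigma,\cdot)$ and (ii) holds. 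To get continuity in $\Sigma$ we argue as in \cite{BDF}.
\begin{itemize}
\item Weak lower semicontinuity (H5), combined with rank-one oscillation constructions (laminates of $u_{\Sigma_1}$, $u_{\Sigma_2}$ with $\Sigma_1-\Sigma_2$ of rank one, converging weakly in $W_\psi$ to $u_{t\Sigma_1+(1-t)\Sigma_2}$), gives rank-one convexity of $H(u_\Sigma,V)$ in $\Sigma$ for each $V$. Locality (H1), translation invariance (H4) and (H2)' are used to evaluate $H$ on the piecewise affine laminates.
\item Rank-one convexity plus the local upper bound (ii) gives local Lipschitz continuity in $\Sigma$, uniformly in $x$. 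Hence we may take a version of $h$ that is Carath\'eodory and rank-one convex, i.e. (i), working first on ${\bf M}^{N\times d}_\Q$ and extending by continuity.
\end{itemize}

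\emph{Step 2: piecewise affine functions, (iii).} Let $u$ be piecewise affine, with pieces $U_k$ and gradients $\Sigma_k$. On each open $U_k$, locality (H1) and (H4) give $H(u,V\cap U_k)=H(u_{\Sigma_k},V\cap U_k)=\int_{V\cap U_k}h(x,\Sigma_k)\,\mathrm{d}x$. Since $u\in W^{1,\infty}$, (H2)' makes $H(u,\cdot)$ a measure. It is absolutely continuous by (H3), so the null set $U\setminus\bigcup_kU_k$ carries no mass, and summing over $k$ gives \eqref{uguaPA}.

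\emph{Step 3: general $u\in W_\psi$, (iv).} Given $u\in W_\psi(U,\R^N)$, Corollary \ref{PAdensity} (which needs the reinforced-$\Delta_2$ property of $\psi$) provides piecewise affine $u_n\to u$ in $W^{1,1}$ with $\int_U\psi(Du_n)\,\mathrm{d}x\to\int_U\psi(Du)\,\mathrm{d}x$.
\begin{itemize}
\item By the $\Delta_2$ property, Vitali's theorem (as in the proof of Corollary \ref{necessario}), and Theorem \ref{musielakbanach}, $u_n$ is bounded in $W_\psi$. Since $W_\psi$ is reflexive, $u_n\rightharpoonup u$ weakly.
\item By (H5), $H(u,V)\le\liminf_n\int_Vh(x,Du_n)\,\mathrm{d}x$.
\item The integrands are dominated by $\beta(1+\psi(Du_n))$, which converges in $L^1$. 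With $h$ Carath\'eodory and $Du_n\to Du$ a.e. along a subsequence, the generalized dominated convergence theorem yields convergence to $\int_Vh(x,Du)\,\mathrm{d}x$.
\end{itemize}
This gives \eqref{upperest}.

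\emph{Step 4: the equality \eqref{repres} under (H2).} This is the main obstacle, because the reverse inequality needs an argument beyond density. We plan a blow-up argument. Fix $u\in W_\psi$. By (H2) and (H3), $H(u,\cdot)=\int g_u\,\mathrm{d}x$ for some density $g_u$. At a.e. $x_0$ that is a Lebesgue point of $g_u$ and of $Du$, we compare $H(u,B_r(x_0))$ with $H(u_{Du(x_0)},B_r(x_0))$.

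To connect the two we use approximation by the affine tangent map. The rescalings $u_r(y)=(u(x_0+ry)-u(x_0))/r$ converge to $u_{Du(x_0)}$ strongly in $W_\psi$, by $\Delta_2$ and differentiability in the $L^1$-mean sense. Combined with (H5) and (H3)-based fundamental-estimate cut-offs (convex combination with the affine function near the boundary, using the $\Delta_2$ property of $\psi$ to control the gluing term), this gives $h(x_0,Du(x_0))\le g_u(x_0)$. The delicate part is that there is no translation invariance in $x$. We therefore perform the cut-off directly on $B_r(x_0)$ rather than blowing up $H$ itself, exploiting (H1) and that $H(v,\cdot)$ is a measure for every $v\in W_\psi$, exactly as in \cite[Theorem 9.1]{BDF}.
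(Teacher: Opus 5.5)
Your Steps 1--3 are essentially the paper's proof of (i)--(iv). The paper proves the piecewise affine representation before rank-one convexity, but your laminate step uses the same ingredients.

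\textbf{The gap is Step 4.} The blow-up you sketch has nothing to run on.
\begin{itemize}
\item $H$ is neither translation- nor scale-invariant in $x$, so the convergence $u_r\to u_{Du(x_0)}$ of the rescalings says nothing about $H$.
\item (H1)--(H5) contain no fundamental estimate and no coercivity, so there are no cut-off estimates for $H$ to invoke.
\item (H5) acts on a fixed open set and gives no $o(r^d)$ control as $r\to0^+$.
\end{itemize}
More fundamentally, a gluing estimate bounds $H$ of the glued map from above by $H(u,B_r(x_0))$ plus errors. The inequality you need, $h(x_0,Du(x_0))\le g_u(x_0)$, is instead a lower bound on $H(u,B_r(x_0))$. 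Closing it this way would also require that affine maps be asymptotically optimal for $H(\cdot,B_r(x_0))$ under their own boundary values. That is a cell-formula statement of the type of Proposition~\ref{G''min}, which the paper proves only with a coercivity bound that Theorem~\ref{repr} does not assume.

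\textbf{The missing idea} is to translate the functional in its argument; this is the paper's final step (\emph{equality by translation}, following Braides--Defranceschi). Fix $u\in W_\psi(U,\R^N)$ and set $H_u(w,A):=H(u+w,A)$.
\begin{itemize}
\item By (H2), $H_u(w,\cdot)$ is a measure for every Lipschitz $w$. This is the only point where (H2), rather than (H2)', is needed.
\item (H1), (H4) and (H5) pass directly to $H_u$.
\item Convexity, evenness and the reinforced-$\Delta_2$ property give $\psi(\Sigma+\Xi)\le c\,(\psi(\Sigma)+\psi(\Xi))$. Hence $H_u(w,B)\le\beta\int_B\bigl(a_u+c\,\psi(Dw)\bigr)\,\mathrm{d}x$, with $a_u:=1+c\,\psi(Du)\in L^1(U)$.
\end{itemize}
Your Steps 1--3 go through verbatim with the integrable weight $a_u$ in place of the constant. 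They give a Carath\'eodory $h_u$ with $\int_V h_u(x,0)\,\mathrm{d}x=H_u(0,V)=H(u,V)$, and $H_u(w,V)\le\int_V h_u(x,Dw)\,\mathrm{d}x$ for all $w\in W_\psi(U,\R^N)$. With the piecewise affine $v_n$ of Corollary~\ref{PAdensity},
\[
\int_V h(x,Dv_n)\,\mathrm{d}x=H(v_n,V)=H_u(v_n-u,V)\le\int_V h_u(x,Dv_n-Du)\,\mathrm{d}x .
\]
Now let $n\to\infty$. The left side tends to $\int_V h(x,Du)\,\mathrm{d}x$, as in your Step 3. On the right, $\psi(Dv_n-Du)\le c\,(\psi(Dv_n)+\psi(Du))$ forces $\psi(Dv_n-Du)\to0$ in $L^1$. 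Generalized dominated convergence then applies to $h_u(x,Dv_n-Du)\le\beta\bigl(a_u+c\,\psi(Dv_n-Du)\bigr)$, so the right side tends to $\int_V h_u(x,0)\,\mathrm{d}x=H(u,V)$. This yields $\int_V h(x,Du)\,\mathrm{d}x\le H(u,V)$, which together with \eqref{upperest} gives \eqref{repres}.
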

\begin{proof} The proof follows the lines of  \cite[Proposition 9.2]{BDF} with some suitable changes. Accordingly, it  will be achieved in several steps.

\medskip\noindent{{Step 1:} \sl definition of $h$.} Fix $\Sigma\in {\bf M}^{N\times d}.$ By (H2)', $H(u_\Sigma , \cdot)$ can be extended to a Borel measure on ${U }$ which is absolutely continuous with respect to the Lebesgue measure since $H$ satisfies  (H3). Then there exists a density function $h_{\Sigma}\in L^1({U })$ defined by 
\[ h_\Sigma(x)=\limsup_{r\to 0^+}\frac{H(u_\Sigma , B_{r}(x))}{|B_{r}(x)|} \qquad \forall x\in {U }\]
such that
\[ H(u_\Sigma ,B)=\int_{B}g_{\Sigma}(x) \mathrm{d}x \qquad \forall B\in \mathcal B({U }).\]
We define $$h(x,\Sigma)=h_{\Sigma}(x) \qquad \forall (x,\Sigma)\in{U }\times {\bf M}^{N\times d}. $$
Thanks to (H3), we have that $h$ satisfies property (ii).

\medskip\noindent{{Step 2:} \sl integral representation on piecewise affine functions.} It is sufficient to repeat verbatim the proof of the analogue step in the proof of  \cite[Theorem 9.1]{BDF} to show that, thanks to (H1), (H2)', (H4) and to the Step 1,  the representation formula \eqref{repres} holds for every piecewise affine function $u\in W^{1,\infty}({U },\R^{ N})$.

\medskip\noindent{{Step 3:} \sl rank-$1$ convexity of $h$}. Following the proof of  Step 3 in   \cite[Theorem 9.1]{BDF}, given $\Sigma, Z \in {\bf M}^{N\times d}$ and  set $V(x)=(t \Sigma+(1-t)Z) x$, it is possibile to construct a sequence of piecewise affine  functions $V_n$ such that $DV_n\in \{\Sigma, Z \}$  and $V_n\weakst V$ in $W^{1,\infty}({U }, \R^{N}) $. 
In particular $\{V_n\}$ is bounded in $L^1( B_r(x_0),\R^N)$, thus we get that  the sequence $\{V_n\}$ is bounded in $W_{\psi}( B_r(x_0),\R^N)$. By Theorem \ref{musielakbanach}, we obtain that $V_n\rightharpoonup V$ in $W_{\psi}( B_r(x_0),\R^N)$.
Hence, proceeding as in the proof of the \cite[Theorem 9.1]{BDF} and by using (H5), we get that for every $ x_0\in {U }$ and every $0< r<d(x_0,\partial {U })$ it holds
\[ H(V, B_r(x_0))\leq t H(u_\Sigma , B_r(x_0))+(1-t) H(u_Z , B_r(x_0)) .\]
On dividing both sides by $|B_r(x_0)|$ and passing to the limsup as $r\to 0^+$, we get that $h(x_0,\cdot)$ satisfies (i). In particular, for every $x\in {U }$, we have that $h(x,\cdot)$ is locally Lipschitz, and hence
$h$ is a Carath\'eodory function.

\medskip\noindent{{Step 4:} \sl   a ''continuity'' property.} For every $V\in \mathcal A(U)$ and $u\in W_{\psi}(U,\mathbb R^N)$,  we define the functional $$\widetilde{H}(u,A)=\int_A h(x,D u(x))\,\mathrm{d}x.$$
Now we claim  that, if a sequence  $\{u_n\}\subseteq W^{1,1}(U,\mathbb R^N) $ converges to $u$ in  $W^{1,1}(A,\R^N)$, as $n\to +\infty$,  and satisfies 
\[\int_{A}  \psi(Du_n) \mathrm{d}x \to  \int_{A} \psi(Du) \mathrm{d}x<+\infty\,
\]
then  $\widetilde{H}(u_n,A)\to \widetilde{H}(u,A)$ as  $n\to +\infty$.  Indeed, let $\{u_n\}$ be as above. Up to a subsequence,
we can assume that  $D u_n(x)\to D u(x)$ for a.e. $x\in A$.
Since $h(x,\cdot)$ is continuous, we have that, as $n\to +\infty$, 
$h(x,D u_n(x))\to h(x,D u(x) )$ for a.e. $x\in A$.
Moreover, thanks to the growth condition (ii) satisfied by $h$, we have that  $$0\leq h(x,D u_n(x))\leq \beta(1+\psi(D u_n))   .$$ Hence it is sufficient to use Vitali convergence theorem, to  obtain the claimed convergence.

\medskip\noindent{{Step 5:} \sl   an inequality by continuity.}
Thanks to  Step 2,  for all $  A\in \mathcal{A}({U })$,  we have that  \[H(u,A)=\widetilde{H} (u,A)
 \  \hbox{for every piecewise affine function } u\in W^{1,\infty}({U },\R^N), \] 
 i.e. \eqref{uguaPA} holds. 
In order to prove \eqref{upperest}, let  $ u\in W_{\psi}( {U },\R^N)$,  then,  using Corollary \ref{PAdensity}, we have that 
 there exists a sequence $\{u_n\}$ of piecewise affine functions such that,  $u_n\to u$ in $W^{1,1}({U },\R^N)$  as $n\to +\infty$,  and 
$$
\int_{{U }}\psi (D u_n)\, \mathrm{d}x\to \int_{U } {\psi} (D u)\, \mathrm{d}x \quad \hbox{ as $n\to +\infty$}.
$$
 
Moreover, since  the sequence $\{u_n\}$ is bounded in $W_{\psi}({U },\R^N)$ which is a reflexive space (see Theorem \ref{musielakbanach}) and converges to $u$ in $W^{1,1}({U },\R^N)$,   we get that the sequence  $\{u_n\}$ weakly converges to $u$ in $W_{\psi}({U },\R^N)$, as $n\to +\infty$.
Passing to the limit in the equality
$H(u_n,A)=\widetilde{H} (u_n,A) $ and  exploiting the lower semicontinuity  assumption (H5) on $H$ and  the continuity property of $\widetilde H$ (provided in Step 4), 
 we get that the  inequality \eqref{upperest} holds.

\medskip\noindent{{Step 5:} \sl  equality by translation}. Finally, if ${H}$ satisfies the stronger assumption (H2), then
using  (H4),  it is sufficient to reason as in step 5 of the proof of  \cite[Theorem 9.1]{BDF} to conclude that 
\[H(u,A)= \widetilde{H}(u,A) \quad \forall v\in W_{\psi}( {U },\R^N),\quad  \forall A\in \mathcal{A}({U }).\] \end{proof}
\subsection{The general case}\label{gencase}
Throughout this section, a radial Carath\'eodory integrand $g\colon \R^d \times  {\bf M}^{N\times d} \to [0, +\infty)$ and $1<p^-\le p^+<+\infty$ satisfying the assumptions of Section \ref{convex homog} are fixed. Let $f\colon {U }\times{\bf M}^{N\times d} \to [0,+\infty)$ be  a Carathéodory function with the following property:  there exist $\alpha,\beta\in (0,+\infty)$  
such that for a.e.  $x\in{U }$ and for every $\Sigma\in {\bf M}^{N\times d}$  it holds
 \begin{equation}\label{growth-f}
\alpha\, g(x, \Sigma)\le f(x,\Sigma) \le \beta\left(1 +g(x, \Sigma)\right).
 \end{equation}
Observe that, by \eqref{growth-g}, up to redefining the constants $\alpha$ and $\beta$  we also have 
\begin{equation*}\label{growthgeneral-p-p+}
 \alpha\left( |\Sigma|^{p^-}-1\right) \le f(x,\Sigma) \le \beta\left(2 +|\Sigma|^{p^+}\right).
\end{equation*}
For every $\varepsilon>0$, we introduce $F_{\varepsilon}:L^1({U })\times \mathcal{A}({U })\to [0,+\infty]$ defined by
\begin{equation}\label{casoscalare}
F_{\varepsilon}(u, V) = 
\begin{cases} 
\displaystyle \int_V f\left(\frac{x}{\varepsilon}, Du(x)\right) \, \mathrm{d}x\, & \text{if } u \in W^{1,1}(V; \mathbb R^{N}), \\
+\infty & \text{otherwise},
\end{cases}
\end{equation}
and set $F_{\varepsilon}(u):=F_{\varepsilon}(u, {U }).$
Given a sequence ${\varepsilon_k}\to 0$, for every $V\in \mathcal A({U })$ we 
 define  
\begin{align*}
    F'(u,V)=\Gamma(L^1)\text{-}\liminf_{k\to +\infty} F_{\varepsilon_k}(u,V), \\
F''(u,V)=\Gamma(L^1)\text{-}\limsup_{k\to +\infty} F_{\varepsilon_k}(u,V).
\end{align*}
Notice that $F'$ and $F''$ may depend on the chosen subsequence $\{\varepsilon_k\}_k$. Throughout this section, we consider a countable subset $\mathcal D\subseteq \A({U })$ such that ${U } \in \mathcal D$, $\mathcal D$ is dense in $\A({U })$ and which is stable under finite union, and  a sequence  ${\varepsilon_k}\to 0$ is fixed from the beginning with the following property:  
\begin{equation}\label{subsequenza}
     F'(\cdot,D)=F''(\cdot, D) \quad \hbox{ for every } D \in \mathcal D.
 \end{equation}
As the considered subset of $\A({U })$ is countable, this can be ensured by compactness of the $\Gamma$-convergence and a diagonal argument. The main result of this section is the following.
\begin{thm}\label{mainthmnonconvex} 
Let  $U\subseteq \mathbb R^d$ be a bounded connected Lipschitz open set and, for $f$ as in \eqref{growth-f}, let    $F_\varepsilon$  be given by \eqref{casoscalare}. Let  $\gamma: {\bf M}^{N\times d}\to [0,+\infty)$  be the $N$-function given by \eqref{fhom} and satisfying  the reinforced-$\Delta_2$ property \eqref{Delta2gamma}.
Fix a sequence ${\varepsilon_k}\to 0$ such that \eqref{subsequenza} holds. Then, there exists a Carath\'eodory function  $\eta\colon U\times {\bf M}^{N\times d}\to [0,+\infty)$  such that $\eta(x, \cdot)$ is rank-$1$ convex for a.e. $x \in U$ and the sequence $F_{\varepsilon_k}$ 
$\Gamma$-converges, with respect to the $L^1(U,\R^N)$ norm, to the functional 
\begin{equation*}
F(u):= \begin{cases} 
\displaystyle\int_{U} \eta(x, D u)\,\mathrm{d}x& \hbox{if }u\in W_{\gamma}(U,\R^N), \\
+\infty & \hbox{otherwise}\,.
\end{cases}
\end{equation*}

\end{thm}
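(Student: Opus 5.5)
Throughout, write $F=F'(\cdot,U)=F''(\cdot,U)$, which exists by \eqref{subsequenza}. The plan is the localization method: show that $H:=F''$ restricted to $W_\gamma(U,\R^N)\times\A(U)$ satisfies the hypotheses of Theorem \ref{repr} with $\psi=\gamma$. Then identify the domain of the $\Gamma$-limit as $W_\gamma$ by comparison with the convex model functionals $G_\varepsilon$ of Section \ref{convex homog}.

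\textbf{Domain and growth.} Since $\alpha g\le f\le \beta(1+g)$, we have $\alpha G_\varepsilon(u,V)\le F_\varepsilon(u,V)\le \beta(|V|+G_\varepsilon(u,V))$. Passing to $\Gamma$-limits along $\varepsilon_k$ (we may assume \eqref{epsk} holds as well) and using \eqref{eq: local} of Theorem \ref{gammamodel}, we get for $u\in W_\gamma(U,\R^N)$ and $V\in\A(U)$
\[
\alpha\int_V\gamma(Du)\,\mathrm{d}x\le F'(u,V)\le F''(u,V)\le \beta\int_V(1+\gamma(Du))\,\mathrm{d}x .
\]
This is (H3). If $u\notin W_\gamma(U,\R^N)$, then $F'(u,U)\ge\alpha G'(u,U)=+\infty$ by \eqref{gammainf}, so the domain is exactly $W_\gamma$.

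\textbf{Hypotheses of Theorem \ref{repr}.} Locality (H1) and translation invariance (H4) are immediate. For (H5), a sequence converging weakly in $W_\gamma$ converges strongly in $L^1$ by the compact embedding of Theorem \ref{musielakbanach}, and $F''(\cdot,V)$ is $L^1$-lower semicontinuous. The central step is the measure property (H2).
\begin{itemize}
\item I would prove an analogue of Lemma \ref{subadditivity} for $F$: superadditivity of $F'$ and $F''(u,V'\cup W)\le F''(u,V)+F''(u,W)$ for $V'\Subset V$. The fundamental estimate for $F_\varepsilon$ holds with the same $L^{p^+}$ remainder, via \cite[Proposition 21.7]{BDF}.
\item Since $f$ is not radial, truncation no longer decreases energy. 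Instead, use the chain rule \eqref{chain}, $\|D\psi_M\|\le1$ and \eqref{cons2} to get $g(x,Du^M)\le g(x,Du)$. Then split
\[
F_\varepsilon(u_k^M,V)\le F_\varepsilon(u_k,V\cap\{|u_k|<M\})+\beta\int_{\{|u_k|\ge M\}}\bigl(1+g(x/\varepsilon_k,Du_k)\bigr)\,\mathrm{d}x .
\]
The error term must be made small uniformly in $k$. Following \cite[Theorem 4.3]{CDSZ}, I would average over dyadic levels $M_i=2^iM_0$, $i=1,\dots,n$. Each set $\{M_i\le|u_k|<2M_i\}$ is disjoint from the others, so some index $i$ carries at most $\frac1n\sup_k G_{\varepsilon_k}(u_k,V)$ of the energy. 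The term $|\{|u_k|\ge M\}|$ is small since $u_k\to u$ in $L^1$.
\item Inner regularity then follows as in Lemma \ref{innreg}. The bound $F''(u,V\setminus K)\le\beta\int_{V\setminus K}(1+\gamma(Du))\,\mathrm{d}x$ replaces \eqref{altop+} and now holds for every $u\in W_\gamma(U,\R^N)$.
\end{itemize}
With increasing, subadditive, superadditive and inner regular set functions, the De Giorgi--Letta criterion gives (H2). Theorem \ref{repr} then yields a Carathéodory $\eta$, rank-$1$ convex in the second variable, with $F''(u,V)=\int_V\eta(x,Du)\,\mathrm{d}x$ on $W_\gamma(U,\R^N)$. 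Combined with the domain identification above, this is the claimed $\Gamma$-convergence.

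\textbf{Main obstacle.} The difficult step is the uniform control of the truncation error in the subadditivity argument. Without radial monotonicity of $f$, the argument needs the reinforced-$\Delta_2$ structure together with the dyadic averaging to absorb the regions where truncation modifies the recovery sequences. I would try first the level-averaging with $n\to\infty$ after $k\to\infty$. The possible mismatch of recovery sequences on $\{M\le|u|\le2M\}$ vanishes as $M\to\infty$ by Lemma \ref{tronc}(iv) and the $L^1$-lower semicontinuity of $F'$.
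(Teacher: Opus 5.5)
Your overall architecture is the paper's. You get growth bounds and the domain by comparison with $G_\varepsilon$ and \eqref{gammainf}, (H1), (H4) and (H5) via the compact embedding, (H2) via De Giorgi--Letta, and then apply Theorem \ref{repr}. The difference is that you represent $F''$. That forces you to prove the \emph{pure} inequality $F''(u,V'\cup W)\le F''(u,V)+F''(u,W)$ for the nonconvex $F_\varepsilon$, and the dyadic argument you describe does not give it.

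The good level $i=i(k)$ chosen by averaging depends on $k$. So the glued functions are built from truncations at varying heights and do not converge to a fixed $u^{M_i}$. You could extract a subsequence along which $i(k)$ is constant, which is the natural way to then use Lemma \ref{tronc}(iv) and lower semicontinuity as you suggest. But that only controls a $\liminf$ along the subsequence. This bounds $F'$, not $F''$ of the full sequence, because a $\Gamma$-$\limsup$ along a subsequence can be strictly smaller than $F''$. Without the pure $F''$ inequality you lose the inner regularity of $F''(u,\cdot)$, which you want ``as in Lemma \ref{innreg}''. You also lose its superadditivity, since Lemma \ref{innreg} derives that from \eqref{subsequenza} \emph{and} inner regularity. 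Hence (H2) for $H=F''$ remains unproved.

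The paper avoids this by representing $F'$ instead, which suffices since $F'(\cdot,U)=F''(\cdot,U)$. It proves only the mixed inequality \eqref{eq: subaddF}, $F'(u,V'\cup W)\le F'(u,V)+F''(u,W)$, as follows.
\begin{enumerate}
\item Take a subsequence realizing the $\liminf$ on $V$, then a further one on which the good level $J$ is constant.
\item Truncate the \emph{whole} sequence at the fixed height $K=2^J$, so that $w_k\to u^K$.
\item Then $F'(u^K,V'\cup W)\le\liminf_k F_{\varepsilon_k}(w_k,V'\cup W)$ can be estimated along the sub-subsequence, where the $\limsup$ on $W$ can only decrease.
\end{enumerate}
Subadditivity of $F'(u,\cdot)$ then follows from \eqref{subsequenza} on $\mathcal D$ together with inner regularity (Lemma \ref{innregF}).

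If you prefer to keep $F''$, you must instead keep $k$-dependent heights $K_k\in\{2^M,\dots,2^{2M}\}$ and diagonalize:
\begin{enumerate}
\item $\|u^{K}-u\|_{L^1}\le\int_{\{|u|\ge 2^M\}}|u|$ uniformly over these $K$.
\item $|u_k^{K}-v_k^{K}|\le\min\{|u_k-v_k|,4K\}$, so the $L^{p^+}$ remainder of the fundamental estimate still vanishes.
\item The glued functions then satisfy $\limsup_k\|w_k-u\|_{L^1}\to0$ as $M\to\infty$, with the desired energy bound up to $O(\sigma)+O(2^{-M})$.
\item A diagonal argument in $(\sigma,M)$ produces a single sequence converging to $u$, which gives the pure $F''$ inequality.
\end{enumerate}
Either way, this step has to be carried out explicitly; your proposal leaves it open.
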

Before approaching the proof of this result, we shortly review the role of the periodicity of $g(\cdot, \Sigma)$ in its derivation.

\begin{rem}\label{no-periodicity2}
We observe that for the results in this section we do not explicitly need to require  $f(\cdot,\Sigma)$ to  be periodic, but periodicity is assumed in the growth function $g(\cdot, \Sigma)$. However, the latter is only needed to ensure that the corresponding family of integral functionals $G_\varepsilon$ defined in \eqref{casovett} have a $\Gamma$-limit which can be represented by a function $\gamma$ independent of $x$. Whenever this information is available (for instance, as a consequence of the more general stationarity assumption considered in Section \ref{stochastic}), the above result can be proved without any  periodicity assumption on $f$.
\end{rem}

The proof strategy towards Theorem \ref{mainthmnonconvex} relies on Theorem \ref{repr}. In order to invoke the latter, in the following we prove that $F'$ satisfies hypotheses (H1)-(H5). Similarly to the proof of Lemma \ref{limsup}, where the hypotheses of the representation theorem \cite[Theorem 9.1]{BDF} in $W^{1,p^+}({U },\R^N)$ were checked, the only nontrivial assumptions we need to  verify are (H2) and (H3).
We follow the steps of the preceding section. Firstly,  the uniform fundamental estimate given by Lemma \ref{fundest} 
still holds with  the functionals $F_{\varepsilon}$, given  by \eqref{casoscalare},  in place of $G_{\varepsilon}$,   thanks to \cite[Proposition 21.7]{BDF}. Moreover, the first part of the following lemma  provides property (H3): here the function $\gamma$ is the same given  by Theorem \ref{gammamodel}.

\begin{lemma} \label{bounds} Let $V\in \mathcal A({U })$. Then  
\begin{enumerate}
\item  for every $u\in W_{\gamma}(U , \R^N)$ it holds
\begin{align}\label{alto}
        F''(u,V)\le  \beta\int_V(1+\gamma(D u))\,\mathrm{d}x;
    \end{align}
 \item for every $u\in L^1({U },\R^N)$,  if $V\in \mathcal A_0({U })$ and $F'(u,{V})<+\infty$  
     then it holds   \begin{align}\label{basso}
        \alpha\int_V\gamma(D u)\,\mathrm{d}x\le F'(u,V).
    \end{align}
       In particular $u\in W_\gamma({V},\R^N)$;
 \item if $u\in W_\gamma({U },\R^N)$ then \eqref{basso} holds without  any additional assumption on $V$.
\end{enumerate}
\end{lemma}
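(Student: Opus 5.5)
The three estimates all come from comparing $F_{\varepsilon}$ with $G_{\varepsilon}$ through \eqref{growth-f}, and then using what Theorem \ref{gammamodel} says about the $\Gamma$-limit of $G_\varepsilon$. Since \eqref{growth-f} is a pointwise inequality between integrands, for every $w$ and $V$ we have
\[
\alpha\, G_{\varepsilon}(w,V)\le F_{\varepsilon}(w,V)\le \beta\bigl(|V|+G_{\varepsilon}(w,V)\bigr).
\]
Taking $\Gamma$-$\liminf$ and $\Gamma$-$\limsup$ along $\varepsilon_k$ preserves these inequalities, because multiplying by a positive constant and adding a constant commute with $\Gamma$-limits. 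This gives
\[
\alpha G'(u,V)\le F'(u,V), \qquad F''(u,V)\le \beta\bigl(|V|+G''(u,V)\bigr).
\]

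For part (1), let $u\in W_\gamma(U,\R^N)$. By \eqref{eq: local} in Theorem \ref{gammamodel}, which holds for every sequence and every $V\in\A(U)$, we have $G''(u,V)=\int_V\gamma(Du)\,\mathrm{d}x$. Substituting this into the second inequality gives \eqref{alto} directly.

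Part (3) is the same argument applied to the first inequality. For $u\in W_\gamma(U,\R^N)$, \eqref{eq: local} gives $G'(u,V)=\int_V\gamma(Du)\,\mathrm{d}x$ for every open $V$, and hence \eqref{basso}.

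Part (2) is the main obstacle, because \eqref{eq: local} requires that $\gamma(Du)$ be integrable on all of $U$, which we do not know yet. We only assume $F'(u,V)<+\infty$, and $V$ is Lipschitz. The plan is to apply Theorem \ref{gammamodel} with $V$ playing the role of the ambient set. Its proof works for any bounded Lipschitz open set; connectedness is not actually used in the argument, and if needed one can treat each of the finitely many connected components of $V$ separately and add up using superadditivity of $G'$. Applied on $V$, the $\Gamma$-convergence of $G_\varepsilon(\cdot,V)$ in $L^1(V)$ to $\int_V\gamma(Du)\,\mathrm{d}x$, extended by $+\infty$ outside $W_\gamma(V,\R^N)$, gives the $\Gamma$-$\liminf$ inequality. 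So for any sequence $u_k\to u$ in $L^1(V)$ we get $\int_V\gamma(Du)\,\mathrm{d}x\le \liminf_k G_{\varepsilon_k}(u_k,V)$. In more detail, Lemma \ref{bassop-} first gives $Du\in L^{p^-}(V)$, so $u\in W^{1,1}(V)$, and Lemma \ref{lowerbound} applied on $V$ then yields the bound. Now take a recovery sequence for $F'(u,V)$ and use $\alpha G_{\varepsilon_k}\le F_{\varepsilon_k}$ along it. This gives $\alpha\int_V\gamma(Du)\,\mathrm{d}x\le F'(u,V)<+\infty$. In particular $\gamma(Du)\in L^1(V)$, so $u\in W_\gamma(V,\R^N)$ by the $\Delta_2$ property and Theorem \ref{musielakbanach}.

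The point to check carefully is that $\Gamma$-convergence on $V$ is compatible with the localized $F'$ defined on $U$. This is fine because $F'(u,V)$ only involves convergence in $L^1(V)$ and the values of the functionals on $V$.
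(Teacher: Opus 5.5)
Your proof is correct and follows the paper's argument: parts (1) and (3) combine \eqref{eq: local} with the upper and lower bounds in \eqref{growth-f}, and part (2) applies Theorem \ref{gammamodel} on the Lipschitz set $V$ and then uses $\alpha G_{\varepsilon_k}\le F_{\varepsilon_k}$ along a recovery sequence for $F'(u,V)$. Your remark that connectedness of $V$ is not needed, or can be handled componentwise via superadditivity of $G'$, makes explicit a point the paper passes over silently.
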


\begin{proof}
Let $G_{\varepsilon}$ be given  by \eqref{casovett}. 
In order to show \eqref{alto}, let $u\in W_\gamma({U },\R^N)$. 
Then, thanks to  \eqref{eq: local},
for the fixed sequence  $\varepsilon_k\to 0$ such that \eqref{subsequenza} holds,  there exists a sequence $\{u_k\}\subseteq L^1(U,\R^N)$, $u_k\to u$ in $L^1(V,\R^N)$, as $k\to +\infty$, such that 
 \[\lim_{k\to +\infty} G_{{\varepsilon_k}}(u_k,V)=\int_V \gamma(D u)\,\mathrm{d}x<+\infty.\]
In particular, there exists   $k_0\in \N$ such that  $u_k\in  W^{1,1}(V,\R^N)$ for $k\geq k_0$ and, using the growth condition \eqref{growth-f} from above, we get 
\[
\begin{split}
        F''(u,V)&\le \limsup_{k\to +\infty} F_{{\varepsilon_k}}(u_k,V)\leq \limsup_{k\to +\infty} \int_V \beta \left(1+g\left(\frac{x}{{\varepsilon_k}}, Du_k\right)\right)\, \mathrm{d}x=\beta\int_V(1+\gamma(D u))\,\mathrm{d}x\,.
    \end{split}
    \]
    In order to prove \eqref{basso} let  $\{w_k\}\subseteq L^1(U,\R^N)$ be a recovery sequence   for   $F'(u,V)<+\infty$. Then, there exists $k_0\in \N$ such that $w_k\in  W^{1,1}(V,\R^N)$ for $k\geq k_0$.   By using the growth condition \eqref{growth-f} from below and  Theorem \ref{gammamodel} (applied with the Lipschitz open set $V$ in place of $U$),  we get that \begin{align*}
        \alpha\int_V\gamma(D u)\,\mathrm{d}x=\alpha G'(u,V) \le \alpha\liminf_{k\to +\infty} G_{\varepsilon_k}(w_k, V)\le \liminf_{k\to +\infty} F_{\varepsilon_k}(w_k,V)=F'(u, V)<+\infty
    \end{align*}
which gives the desired conclusion. Part (3) follows from \eqref{eq: local} in a  similar way.
\end{proof}

Now we will turn our attention to (H2). As  in the previous section, we will exploit   De Giorgi-Letta criterion. To this aim, we provide two results that are analogous to Lemmas \ref{subadditivity} and \ref{innreg}. We note that the functional $F_{\varepsilon_k}$ does not satisfy \eqref{eq:dx}: hence  we need to provide some  necessary  modifications    to the proof of Lemma \ref{subadditivity}.

\begin{lemma}\label{subadditivityF}
  Let  $u\in L^1({U },\R^N)$. Then $F'(u,\cdot)$ is superadditive on $\mathcal A({U })$.
Moreover,  for every $V,V',W\in \mathcal A({U })$ and  $V'\Subset V$ we have:
    \begin{align}
        F'(u,V'\cup W)\le F'(u,V)+F''(u,W).\label{eq: subaddF}
    \end{align}
\end{lemma}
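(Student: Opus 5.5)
Superadditivity of $F'(u,\cdot)$ goes exactly as in Lemma \ref{subadditivity}: take a recovery sequence for $F'(u,V\cup W)$ with $V\cap W=\varnothing$, split the integral over the two disjoint sets, and use superadditivity of the liminf. No structure of $f$ is needed for this.

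For \eqref{eq: subaddF} I assume $F'(u,V)$ and $F''(u,W)$ are finite. I take $u_k\to u$ on $V$ with $\liminf_k F_{\varepsilon_k}(u_k,V)=F'(u,V)$, and $v_k\to u$ on $W$ with $\limsup_k F_{\varepsilon_k}(v_k,W)=F''(u,W)$. As in Lemma \ref{subadditivity}, the fundamental estimate (valid for $F_\varepsilon$ by \cite[Proposition 21.7]{BDF}) needs $u_k-v_k\to0$ in $L^{p^+}(V\cap W)$, so I truncate with Lemma \ref{tronc}. Property $(iii)$ gives $u_k^M\to u^M$ and $v_k^M\to u^M$ in $L^{p^+}$, so gluing $w_k^M=\varphi u_k^M+(1-\varphi)v_k^M$ yields
\[
F'(u^M,V'\cup W)\le(1+\sigma)\Big(\liminf_k F_{\varepsilon_k}(u_k^M,V)+\limsup_k F_{\varepsilon_k}(v_k^M,W)\Big)+\sigma .
\]
The difficulty is that \eqref{eq:dx} fails, because $f$ is not radially monotone. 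Instead I split $V$ into $\{|u_k|<M\}$, where $Du_k^M=Du_k$, and $\{|u_k|\ge M\}$. On the second set I use \eqref{growth-f}, monotonicity of $g$ and $|Du_k^M|\le|Du_k|$ to get
\[
f\big(\tfrac{x}{\varepsilon_k},Du_k^M\big)\le \beta\Big(1+g\big(\tfrac{x}{\varepsilon_k},Du_k\big)\Big)\le \beta+\tfrac{\beta}{\alpha}f\big(\tfrac{x}{\varepsilon_k},Du_k\big).
\]
Hence $F_{\varepsilon_k}(u_k^M,V)\le F_{\varepsilon_k}(u_k,V)+C\int_{\{|u_k|\ge M\}\cap V}(1+f(x/\varepsilon_k,Du_k))\,dx$, and the same bound holds for $v_k$ on $W$.

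The main obstacle is showing that this error term vanishes as $M\to\infty$ uniformly in $k$. The measure of $\{|u_k|\ge M\}$ tends to zero uniformly, since $u_k\to u$ in $L^1$. The energies $f(\cdot,Du_k)$, however, are only bounded in $L^1$, not equi-integrable, so the error need not vanish. I would handle this with the standard averaging trick of \cite[Theorem 4.3]{CDSZ}. Fix $L\in\N$ and consider the levels $M_i=2^iM_0$, $i=1,\dots,L$. The layers $\{M_i\le|u_k|<2M_i\}$ are pairwise disjoint, and the gradient of the truncation at level $M_i$ differs from $Du_k$ only on the layer $\{M_i\le |u_k|\le 2M_i\}$ (where the bound above applies) and on $\{|u_k|> 2M_i\}$, where $Du_k^{M_i}=0$ a.e. and $f(\cdot,0)\le\beta$. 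Summing over $i$ and using energy boundedness, for each $k$ I can pick $i_k$ such that
\[
\int_{\{M_{i_k}\le|u_k|\le 2M_{i_k}\}}\big(1+f(\cdot,Du_k)\big)\,dx\le C/L .
\]
Passing to a subsequence along which $i_k\equiv i$ is constant, I do this jointly for $u_k$ and $v_k$ by summing both energies. The level $M_i$ lies between $2M_0$ and $2^LM_0$, so the $L^{p^+}$ convergence still holds for fixed $L$. Subsequences are harmless here: for $v_k$ the limsup only decreases, and for $u_k$ I first pass to a subsequence realizing the liminf. This gives
\[
F'(u^{M_i},V'\cup W)\le (1+\sigma)\big(F'(u,V)+F''(u,W)+C/L+C|\{|u|\ge M_0\}|\big)+\sigma .
\]
Letting $M_0\to\infty$ (so $u^{M_i}\to u$ in $L^1$), then $L\to\infty$ and $\sigma\to0$, the $L^1$-lower semicontinuity of $F'$ yields \eqref{eq: subaddF}.
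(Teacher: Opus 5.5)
Your proposal is correct and follows essentially the same route as the paper. The paper also passes to a subsequence realizing the liminf. It then pigeonholes over the disjoint dyadic layers $\{2^i<|u_k|\le 2^{i+1}\}$, $M\le i\le 2M$, to select a truncation level with small energy jointly for $u_k$ and $v_k$, and freezes that level along a further subsequence. It controls the transition layer via the upper bound in \eqref{growth-f} together with $|Du_k^K|\le|Du_k|$, and uses $f(\cdot,0)\le\beta$ above the truncation. Finally it glues with the fundamental estimate and concludes by $L^1$-lower semicontinuity of $F'$.

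The only differences are cosmetic. The paper ties the number of layers to the base level, whereas you decouple them with $L$ and $M_0$. The paper also bounds the region above the truncation by Chebyshev's inequality, $\frac{\beta}{M}\|u_k\|_{L^1}$, rather than by convergence in measure.
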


\begin{proof}
 The proof of superadditivity of $F'$  is the same as in Lemma \ref{subadditivity}. Now  we prove \eqref{eq: subaddF}.   
We fix two sequences $u_k$ and $v_k$ such that $u_k \to u$ in $L^1(V, \R^N)$, $v_k \to u$ in $L^1(W, \R^N)$ and 
\[
\liminf_{k\to +\infty}F_{\varepsilon_k}(u_k, V)=F'(u, V) \quad \mbox{ and }\limsup_{k\to +\infty}F_{\varepsilon_k}(v_k, W)=F''( u, W).
\]
We also fix a subsequence $k_\ell$ such that the liminf above holds as a limit: along each further subsequence $k_{\ell_m}$ thereof we then have
\begin{equation}\label{eq: estratte}
\lim_{m\to +\infty}F_{\varepsilon_{k_{\ell_m}}}(u_{k_{\ell_m}}, V)=F'(u, V) \quad \mbox{ and }\limsup_{m\to +\infty}F_{\varepsilon_{k_{\ell_m}}}(v_{k_{\ell_m}}, W)\le F''( u, W),
\end{equation}
as the limsup is possibly decreased when taking a subsequence.

Since we can assume that $F'(u,V)$ and $F''(u,W)$ are finite (otherwise, the thesis is trivial),  exploiting the growth condition from below  given by \eqref{growth-f}, there exists $L<+\infty$ such that for every $k\in\N$ 
\begin{equation}\label{stima15}
    \alpha \left(\int_V g\left(\frac{x}{{\varepsilon_k}}, Du_k\right)\, \mathrm{d}x + \int_Wg\left(\frac{x}{{\varepsilon_k}}, Dv_k\right)\, \mathrm{d}x\right) \leq L.
\end{equation}
Moreover, since $u_M\to u$ in $L^1({U }, \mathbb R^N)$, thanks to Lemma \ref{tronc}, by lower semicontinuity, we have that  $$F'(u,V'\cup W)\leq \liminf_{M\to +\infty} F'\left(u^M,V'\cup W\right).$$
For a fixed $\sigma>0$, let  $M\in\N$ such that $\frac{\beta(|V|+|W|+L \alpha^{-1})}{M+1}<\sigma$ and it holds 
\begin{equation}\label{sciF'}
F'(u,V'\cup W)\le F'\left(u^K,V'\cup W\right) + \sigma \qquad \hbox{for every } K> M.
\end{equation}
We can also assume that $M$ is chosen so that
\begin{equation}\label{superlevels}
\frac\beta M\left(\|u\|_{L^1(V, \R^N)}+\|u\|_{L^1(W, \R^N)}\right)\le \sigma\,.
\end{equation}
From \eqref{stima15} we have 
\begin{equation*}
\begin{split}
\sum_{i=M}^{2M} \beta \Bigg(
    &\int_{V\cap \{ 2^i < |u_k| \le 2^{i+1} \}}
        \left(1 + g\left(\frac{x}{\varepsilon_k}, Du_k\right)\right)\, \mathrm{d}x\, +
    \\
    &
    \beta\int_{W\cap \{ 2^i < |v_k| \le 2^{i+1} \}}
        \left(1 + g\left(\frac{x}{\varepsilon_k}, Dv_k\right)\right)\, \mathrm{d}x
\Bigg)
\leq \beta(|V|+|W|+L \alpha^{-1})<\sigma(M+1).
\end{split}
\end{equation*}
Hence, for each $k\in\N$ there exists $j({\varepsilon_k})\in\N$, $M \leq j({\varepsilon_k})\leq 2M$,  such that 
\begin{equation}\label{troncJ}
\begin{split}
&\beta  \int_{V\cap \{2^{j(\varepsilon_k)} < |u_k| \le 2^{j(\varepsilon_k)+1}\}}
        \left(1 + g\left(\frac{x}{\varepsilon_k}, Du_k\right)\right)\, \mathrm{d}x \, +
    \\
    &  
    \beta\int_{W\cap \{2^{j(\varepsilon_k)} < |v_k| \le 2^{j(\varepsilon_k)+1}\}}
        \left(1 + g\left(\frac{x}{\varepsilon_k}, Dv_k\right)\right)\, \mathrm{d}x
< \sigma .
\end{split}
\end{equation}
Since $\{j({\varepsilon_k})\}_{k}$ is a bounded sequence of integers, there exists $J\in \mathbb N$, $M\leq J\leq 2M$, and a subsequence $\{\varepsilon_{k_{\ell_m}}\}_m$, where $k_\ell$ is fixed before \eqref{eq: estratte}, such that $j({\varepsilon_{k_{\ell_m}}})=J$ for every $m\in \mathbb N$. We fix the value $K=2^J$.  For every $k\in \N$, we split  the set $V$and $W$  as follows \[V=  \{x\in V: |u_k|\le 2^{J}\}\cup  \{x\in V: 2^{J}<|u_k|\le 2^{J+1}\} \cup  \{x\in V: |u_{k}|\ge 2^{J+1}\}=V^1_{k}\cup V^2_{k}\cup V^3_{k},\]
\[W=  \{x\in W: |v_k|\le 2^{J}\}\cup  \{x\in V: 2^{J}<|v_k|\le 2^{J+1}\} \cup  \{x\in V: |v_{k}|\ge 2^{J+1}\}=W^1_{k}\cup W^2_{k}\cup W^3_{k}.\]   We observe  that, being  $2^{J+1}>K$, we have that $Du_k^K=0$ for a.e  $x\in V^3_{k}$ and $Dv_k^K=0$  for a.e  $x\in W^3_{k}$. Since, for a.e.\ $x\in{U }$,   $g(x,O)=0$ gives $f(x,O)\le \beta$, thanks to \eqref{growth-f}, by Chebyshev inequality and  taking into account that  $2^{J+1} >M$, for all $k\in \N$ we get that 
\begin{equation}\label{eq: ovvia}
\begin{split}
\int_{V^3_{k}} f\!\left(\frac{x}{\varepsilon_k},Du_k^{K}\right)\, \mathrm{d}x +\int_{W^3_{k}}
    f\!\left(\frac{x}{\varepsilon_k},Dv_k^{K}\right)\, \mathrm{d}x  &=   \int_{V^3_{k}}
    f\!\left(\frac{x}{\varepsilon_k},O\right)\, \mathrm{d}x +\int_{W^3_{k}}
    f\!\left(\frac{x}{\varepsilon_k},O\right)\, \mathrm{d}x\\
  \le  \vphantom{\int_{W^3_{k}}
    f\!\left(\frac{x}{\varepsilon_k},O\right)\, \mathrm{d}x }&\frac\beta M\left(\|u_k\|_{L^1(V, \R^N)}+\|v_k\|_{L^1(W, \R^N)}\right)\,.
\end{split}
\end{equation}
Now, along the subsequence ${k_{\ell_m}}$, since $g(x,\cdot)$ is radial increasing and $|Du^K_{k_{\ell_m}}|\leq |Du_{k_{\ell_m}}|$, we have that
\begin{equation}\label{troncJ2}
 \int_{V^2_{k_{\ell_m} }}
        \left(1 + g\left(\frac{x}{\varepsilon_k}, Du^K_{k_{\ell_m}}\right)\right)\, \mathrm{d}x \leq  \int_{V^2_{k_{\ell_m} }}        \left(1 + g\left(\frac{x}{\varepsilon_k}, Du_{k_{\ell_m}}\right)\right)\, \mathrm{d}x.
\end{equation}
Taking into account that    $u ^K_{k_{\ell_m}}=u_{k_{\ell_m}}$  on $V^1_{k_{\ell_m} }$,  applying on $V^2_{k_{\ell_m}}$  the growth condition from above \eqref{growth-f} combined with  the estimate \eqref{troncJ2} and \eqref{troncJ}   with $j({\varepsilon_{k_{\ell_m}}})=J$, and  using \eqref{eq: ovvia} to estimate the integral on $V^3_{k_{\ell_m}}$ (and proceeding in  the same way on $W$),  we obtain
\begin{equation}\label{stima2}
\begin{split}
 F_{\varepsilon_{k_{\ell_m}}}(u_{k_{\ell_m}}^{K},V) & + F_{\varepsilon_{k_{\ell_m}}}(v_{k_{\ell_m}}^{K},W) \le\\
F_{\varepsilon_{k_{\ell_m}}}(u_{k_{\ell_m}},V) + F_{\varepsilon_{k_{\ell_m}}}(v_{k_{\ell_m}}, W)&+ \sigma + \frac\beta M\left(\|u_{k_{\ell_m}}\|_{L^1(V, \R^N)}+\|v_{k_{\ell_m}}\|_{L^1(W, \R^N)}\right)\,.
\end{split}
\end{equation}
Using the fundamental estimate in Lemma \ref{fundest}, we construct a sequence $w_k=\varphi_{_k} u^K_k + (1 - \varphi_{_k})v^K_k$ such that
\[
    F_{\varepsilon_k}(w_k,V'\cup W)\le (1+\sigma)\bigl[F_{\varepsilon_k}(u_k^K,V)+F_{\varepsilon_k}(v_k^K,W)\bigr]+M_\sigma\int_{V\cap W} |u_k^K-v_k^K|^{p^+}\, \mathrm{d}x + \sigma\,.
 \]
Since  $\|u^{K}_k - v^{K}_k\|_{L^{p^+}(V\cap W,\mathbb R^N)}\to 0 \ $ as $k\to +\infty$, this gives
\begin{equation}\label{feF}
 \liminf_{k\to+\infty}F_{\varepsilon_k}(w_k,V'\cup W)\le (1+\sigma)\liminf_{k\to+\infty}\bigl[F_{\varepsilon_k}(u_k^K,V)+F_{\varepsilon_k}(v_k^K,W)\bigr]+\sigma.
\end{equation}
Since  $u_k \to u$ in $L^1(V, \R^N)$, $v_k \to v$ in $L^1(W, \R^N)$, and $w_k\to u^{K}$ in $L^1(V'\cup W, \mathbb R^N)$,  by exploiting \eqref{sciF'}, \eqref{feF}, \eqref{stima2},  \eqref{superlevels} and  \eqref{eq: estratte}, we eventually get 
\begin{equation}
\begin{aligned}
    F'(u,V'\cup W)
    &\le F'\left(u^{K},V'\cup W\right) + \sigma \\
    &\le \liminf_{k\to +\infty} F_{\varepsilon_k}(w_k,V'\cup W) + \sigma \\
    &\le (1+\sigma)\left[\liminf_{k\to +\infty} \left(F_{\varepsilon_k}(u_k^{K},V)
        + F_{\varepsilon_k}(v_k^{K},W)\right)\right] + 2\sigma \\
    &\le  (1+\sigma)\left[\liminf_{m\to +\infty} \left(F_{\varepsilon_{k_{\ell_m}}}(u_{k_{\ell_m}}^{K},V) + F_{\varepsilon_{k_{\ell_m}}}(v_{k_{\ell_m}}^{K},W)\right)\right] + 2\sigma\\
    &\le (1+\sigma)\left[\liminf_{m\to +\infty} F_{\varepsilon_{k_{\ell_m}}}(u_{k_{\ell_m}}, V) + \limsup_{m\to +\infty} F_{\varepsilon_{k_{\ell_m}}}(v_{k_{\ell_m}},W)+2\sigma\right]+2\sigma \\
    &\le (1+\sigma)\left[F'(u,V)+F''(u,W)+2\sigma\right]+2\sigma.
\end{aligned}
\end{equation}
As $\sigma\to 0$, we obtain the thesis \eqref{eq: subaddF}.
\end{proof}

The previous result can  be used to infer the inner regularity and the subadditivity of $F'(u,\cdot)$ that we discuss in the next Lemma.
\begin{lemma}\label{innregF} For every $u\in W_{\gamma}({U },\R^N)$ we have that   $F'(u,\cdot)$ is an   inner regular  increasing set functions on $\mathcal A( {{U }})$. Moreover $F'(u,\cdot)$ is subadditive and superadditive for every $u\in W_{\gamma}({U },\R^N)$.
\end{lemma}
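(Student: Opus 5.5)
I will follow the structure of the proof of Lemma \ref{innreg}, with Lemma \ref{subadditivityF} in place of Lemma \ref{subadditivity} and the bound \eqref{alto} in place of \eqref{altop+}. Monotonicity of $F'(u,\cdot)$ is immediate from the definition, since $F_{\varepsilon}(v,\cdot)$ is increasing in the set variable and restricting the convergence $u_k\to u$ in $L^1$ to a smaller set is harmless.

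\emph{Inner regularity.} Fix $u\in W_\gamma(U,\R^N)$ and $V\in\A(U)$. Take a compact $K'\subset V$ and open sets $V''\Subset V'\Subset V$ with $K'\subset V''$. I apply \eqref{eq: subaddF} to the triple $V''\Subset V'$ and $W=V\setminus K'$, noting that $V''\cup W=V$. Then \eqref{alto} gives
\[
F'(u,V)\le F'(u,V')+F''(u,V\setminus K')\le \sup\{F'(u,V'):V'\Subset V\}+\beta\int_{V\setminus K'}(1+\gamma(Du))\,\mathrm{d}x .
\]
Since $u\in W_\gamma(U,\R^N)$ and $\gamma(Du)\in L^1(U)$, letting $K'$ exhaust $V$ so that $|V\setminus K'|\to0$ makes the last term vanish. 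The reverse inequality follows from monotonicity. Here the assumption $u\in W_\gamma$ on the whole of $U$ is exactly what makes $F''(u,V\setminus K')$ small.

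\emph{Superadditivity.} This is already contained in Lemma \ref{subadditivityF}.

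\emph{Subadditivity.} This is the delicate point, because \eqref{eq: subaddF} only has $F''$ on the right-hand side, and unlike in Lemma \ref{innreg} I want $F'$ there. I would use \eqref{subsequenza}. Given $V,W\in\A(U)$ and $V'\Subset V$, $W'\Subset W$, pick $D_1,D_2\in\mathcal D$ with $V'\Subset D_1\Subset V$ and $W'\Subset D_2\Subset W$. Applying \eqref{eq: subaddF} with $V'\Subset D_1$ and the set $D_2$ gives
\[
F'(u,V'\cup W')\le F'(u,V'\cup D_2)\le F'(u,D_1)+F''(u,D_2)=F'(u,D_1)+F'(u,D_2)\le F'(u,V)+F'(u,W),
\]
where the equality uses \eqref{subsequenza} and the last step uses monotonicity.

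Finally, every $Z\Subset V\cup W$ is contained in some $V'\cup W'$ with $V'\Subset V$ and $W'\Subset W$, by a compactness and covering argument. Taking the supremum over such $Z$ and using inner regularity then gives $F'(u,V\cup W)\le F'(u,V)+F'(u,W)$.

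The main obstacle I expect is this last step: passing from $F''$ to $F'$ on the right-hand side through the dense class $\mathcal D$, and checking that inner regularity is applied only to $u\in W_\gamma(U,\R^N)$, where \eqref{alto} is available.
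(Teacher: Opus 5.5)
Your proposal is correct and follows essentially the same route as the paper: inner regularity comes from \eqref{eq: subaddF} with $W=V\setminus K'$ combined with \eqref{alto}, and superadditivity from Lemma \ref{subadditivityF}. Subadditivity is obtained by passing through sets of $\mathcal D$, so that \eqref{subsequenza} turns $F''$ into $F'$, and then using inner regularity. The differences are only cosmetic: the paper takes $V',W'\in\mathcal D$ directly, so your auxiliary set $D_1$ is unnecessary though harmless, and it cites \cite[Lemma 14.20]{DM} for the covering step that you argue by hand.
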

\begin{proof}
 Fix $Z\in \mathcal{A}({U })$. Let $K$ be a compact set of $Z$  and choose $Z', Z''\in \mathcal{A}({Z })$  such that $K\subset Z''\Subset Z' \Subset Z$.  Set $W=Z\setminus K$. We apply Lemma \ref{subadditivityF} with  the triple $Z''$, $Z'$ and $W=Z\setminus K$, and observe that $Z''\cup W=Z$. Also using \eqref{alto} on $W$,  we then have
\begin{align*}
    F'(u,Z)&\le F'(u,Z')+F''(u,Z\setminus K)\le \sup\{F'(u,Z'):\, Z'\Subset Z\}+F''(u,Z\setminus K)\\
    &\le \sup\{F'( u,Z'):\, Z'\Subset Z\}+\beta\displaystyle \int_{Z\setminus K} (1+\gamma(Du))\, \mathrm{d}x.
\end{align*}
Then, as $|Z\setminus K|\to 0$, we get 
\begin{equation*}
    F'(u,Z)\le \sup\{F'(u,Z'):\, Z'\Subset Z\}\,.
\end{equation*}
Since the reverse inequality is trivial, we obtain that $F'(u,\cdot)$ is inner regular. 
To prove the subadditivity, given $V, W \in \mathcal A(U)$, choose $V'\Subset V$ and $W' \Subset W$ with $V', W' \in \mathcal D$ and use \eqref{eq: subaddF} and \eqref{subsequenza} to get
\[
F'(u, V'\cup W')\le F'(u, V)+F''(u, W')= F'(u, V)+F'(u, W')\le F'(u, V)+F'(u, W).
\]
Passing to the supremum on $V'\Subset V$ and $W' \Subset W$,   \cite[Lemma 14.20]{DM} and the inner regularity of $F'$ give the desired conclusion.
\end{proof}
We are now in a position to represent $F'$ as in integral functional on  $W_{\gamma}({U },\R^N)$.
\begin{lemma}\label{limsupF}
 Under the assumptions of Theorem \ref{mainthmnonconvex}, there exists a Carathéodory function $\eta\colon {U }\times {\bf M}^{d\times N }\to [0,+\infty)$ such that  
\begin{enumerate}
    \item for every  $u\in W_{\gamma}({U },\R^N)$ and for every  $V\in \mathcal{A}({U })$  it holds
    \begin{equation}\label{reprlimsupF}
    F'(u,V)=  \displaystyle \int_V \eta (x, D u)\,\mathrm{d}x;
    \end{equation}
    \item $\alpha\gamma(\Sigma)\leq \eta(x,\Sigma)\leq \beta\left(2+\gamma(\Sigma)\right)$ for a.e. $x\in {U }$ and for every $\Sigma\in {\bf M}^{N\times d}$;
    \item $\eta(x,\cdot)$ is rank-1 convex for a.e. $x\in {U }$.
 
  \end{enumerate}
\end{lemma}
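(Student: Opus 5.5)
The plan is to apply Theorem \ref{repr} with $\psi=\gamma$ and $H=F'$ restricted to $W_\gamma(U,\R^N)\times\A(U)$. The function $\gamma$ is an $N$-function with the reinforced-$\Delta_2$ property (Proposition \ref{gammaN}), so $W_\gamma(U,\R^N)$ is a reflexive Banach space compactly embedded in $L^1$ (Theorem \ref{musielakbanach}). It therefore suffices to check (H1), (H2), (H3), (H4), (H5) for $F'$. Once that is done, the last part of Theorem \ref{repr} gives a Carath\'eodory $\eta$ that is rank-$1$ convex in $\Sigma$ and satisfies \eqref{repres}, which is item (1) and item (3).

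The hypotheses are checked as follows. (H1) locality is immediate, since $F_{\varepsilon_k}(\cdot,V)$ only sees $u$ on $V$ and the $\Gamma$-liminf is taken in $L^1(V)$. (H4) holds because $F_{\varepsilon}(u+c,V)=F_\varepsilon(u,V)$ and translations preserve $L^1$ convergence. For (H3), combine $F'\le F''$ with \eqref{alto}. This gives $F'(u,B)\le\beta\int_B(1+\gamma(Du))$ for all $u\in W_\gamma(U,\R^N)$, and $F'\ge0$ trivially. The growth constant is $\beta$, possibly after enlarging it, which is harmless. For (H5), $F'(\cdot,A)$ is $L^1$-lower semicontinuous as a $\Gamma$-liminf. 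Weak convergence in $W_\gamma(U,\R^N)$ implies strong $L^1$ convergence by the compact embedding of Theorem \ref{musielakbanach}, since a weakly convergent sequence is bounded and its limit is identified. Hence $F'(\cdot,A)$ is sequentially weakly lower semicontinuous. For (H2), Lemma \ref{innregF} shows that for $u\in W_\gamma$ the map $F'(u,\cdot)$ is increasing, inner regular, subadditive and superadditive. The De Giorgi--Letta criterion then makes it the restriction to $\A(U)$ of a Borel measure, and this measure is finite by (H3).

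For item (2), the upper bound $\eta(x,\Sigma)\le\beta(1+\gamma(\Sigma))\le\beta(2+\gamma(\Sigma))$ follows from property (ii) of Theorem \ref{repr}, since $\eta$ is the Lebesgue density of $F'(u_\Sigma,\cdot)$. For the lower bound, apply part (3) of Lemma \ref{bounds} to $u_\Sigma\in W_\gamma(U,\R^N)$ on balls $B_r(x)\Subset U$:
\[
\alpha\gamma(\Sigma)|B_r(x)|\le F'(u_\Sigma,B_r(x))=\int_{B_r(x)}\eta(y,\Sigma)\,\mathrm{d}y .
\]
Differentiating by Lebesgue's theorem gives $\alpha\gamma(\Sigma)\le\eta(x,\Sigma)$ for a.e. $x$, first for each $\Sigma$ in the countable set ${\bf M}^{N\times d}_\Q$. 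Continuity of $\eta(x,\cdot)$ (rank-$1$ convexity gives local Lipschitz continuity) and of $\gamma$ then extends the inequality to all $\Sigma$ off a single null set.

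I expect the main obstacle to be (H2), in particular the subadditivity of $F'(u,\cdot)$ on arbitrary open sets. That step rests on the truncation argument of Lemma \ref{subadditivityF} and on the identity $F'=F''$ on $\mathcal D$ from \eqref{subsequenza}, and both are already packaged in Lemma \ref{innregF}. The only residual care point is that the measure-property hypothesis must hold on all of $W_\gamma$ and not merely on $W^{1,\infty}$; that is what gives the full equality \eqref{repres} rather than only \eqref{upperest}, and Lemma \ref{innregF} is stated for all of $W_\gamma$.
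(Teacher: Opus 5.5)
Your proposal is correct and follows the paper's proof. Both apply Theorem \ref{repr} with $\psi=\gamma$ and $H=F'$, taking (H5) from the compact embedding of Theorem \ref{musielakbanach}, (H3) from Lemma \ref{bounds}(1), (H2) from Lemmas \ref{subadditivityF} and \ref{innregF} via De Giorgi--Letta, and the lower bound in (2) from \eqref{basso}; your Lebesgue-differentiation and density argument for $\alpha\gamma\le\eta$ just spells out a step the paper leaves implicit.
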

\begin{proof} Proposition \ref{gammaN} ensures  that the function  $\gamma$ given by  \eqref{fhom} is an $N$-function satisfying  the reinforced-$\Delta_2$ property. Thanks to the compact embedding of $W_{\gamma}({U },\R^N)$  in $L^1({U },\R^N)$ (see Theorem \ref{musielakbanach}), for any \( V \in \mathcal{A}({U }) \),    the functional   \( F'(\cdot, V) \),  which is  sequentially lower semicontinuous with respect to the $L^1(V)$-convergence,  is sequentially  lower semicontinuous with respect to the weak convergence of $W_{\gamma}({U }, \R^N)$.
  Then, taking into account also Lemmas \ref{bounds} Part (1), \ref{subadditivityF} and \ref{innregF}, we get that $F'$ satisfies all the assumptions of  Theorem \ref{repr} with $\gamma$ in place of $\psi$. The application of the latter, together with the lower bound 
\eqref{basso} to get  the growth condition from below for $\eta$, entails the desired result.
\end{proof}
As in Section \ref{convex homog}, for the applications to the stochastic homogenization, we prove an analog of Proposition \ref{G''min}.
We note that  formula \eqref{eq:cellformula1F} below, when applied to $F'=H$,   extends \cite[Theorem 2]{bflm} to  the case of translation invariant functionals $H$, defined on an  anisotropic Musielak-Orlicz Space $W_\psi(U,\R^N)$, under the non standard growth condition 
\[
       \alpha\int_V\psi(D u)\,\mathrm{d}x\le H(u,V)\le  \beta\int_V(1+\psi(D u))\,\mathrm{d}x
   \qquad \forall u\in W_\psi({U },\R^N)\,, \  \forall\, V\in \mathcal{A}(U).\]

\begin{prop}\label{minF''eta}
Let $m_{F'}(u_\Sigma, Q_r(x))$ and  $m_{F_{\varepsilon}}(u_\Sigma, Q_r(x))$ be given by  $\eqref{eq: minimointornobordo}$. Under the assumptions of this section, the following holds.
   \begin{enumerate}
\item 
For every $\Sigma \in \mathbf{M}^{N\times d}$ and for a.e. $x \in U $ we have
\begin{equation}\label{eq:cellformula1F}
\eta(x,\Sigma)
=
\lim_{r\to 0^+}
\frac{m_{F'}(u_\Sigma, Q_r(x))}{|Q_r(x)|}.
\end{equation}

\item 
For every $\Sigma \in \mathbf{M}^{N\times d}$ and for  for a.e.   $x \in U $, we have
\begin{equation}\label{eq:convminimaF}
\limsup_{r\to 0^+}
\liminf_{k\to +\infty}
\frac{m_{F_{\varepsilon_k}}(u_\Sigma, Q_r(x))}{|Q_r(x)|}\le \eta(x, \Sigma)\le \liminf_{\theta \to 1^-}\liminf_{r\to 0^+}
\liminf_{k\to +\infty}
\frac{m_{F_{\varepsilon_k}}(u_\Sigma, Q_{\theta r}(x))}{|Q_{\theta r}(x)|}.
\end{equation}
\end{enumerate}

\end{prop}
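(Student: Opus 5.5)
We will follow the proof of Proposition \ref{G''min}, with $G''$ replaced by $F'$, $W^{1,p^+}$ replaced by $W_\gamma$, and the truncation argument of Lemma \ref{subadditivityF} used wherever the monotonicity \eqref{eq:dx} was used before.

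\emph{Step 1: continuity in the cube size.} The first step is the analogue of \eqref{mdelta}:
\[
\lim_{\delta\to0^+} m_{F'}(u_\Sigma,Q_{r-\delta}(x))=m_{F'}(u_\Sigma,Q_r(x)).
\]
For the $\liminf$ inequality we take an almost minimizer $v$ on $Q_{r-\delta}(x)$ and extend it by $u_\Sigma$. We then apply \eqref{eq: subaddF}, with $W$ a thin frame on which the function equals $u_\Sigma$, and bound $F''(u_\Sigma,W)$ by \eqref{alto}, which gives $\beta(1+\gamma(\Sigma))|W|$. For the $\limsup$ inequality, an almost minimizer on $Q_r(x)$ equals $u_\Sigma$ near $\partial Q_{r-\delta}(x)$ for small $\delta$, and monotonicity of $F'(v,\cdot)$ finishes.

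\emph{Step 2: the limit formula \eqref{eq:cellformula1F}.} We run the argument of \cite[Lemmas 3.3, 3.5]{bfm}. This needs three ingredients.
\begin{itemize}
\item $L^1$ lower semicontinuity of $F'$, together with coercivity, taken from \eqref{basso}.
\item A finite additivity property on functions glued to $u_\Sigma$ near the boundaries, as in Lemma \ref{G''min-aux}. Its proof transfers with $F'$ in place of $G''$: one direction uses \eqref{eq: subaddF} and \eqref{subsequenza}; the other uses superadditivity of $F'$ and the inner regularity estimate of Lemma \ref{innregF}. That estimate holds whenever $u\in W_\gamma$ outside a compact set, since its proof only uses \eqref{alto} on $Z\setminus K$.
\item The fact that $F'(u_\Sigma,\cdot)$ is a measure absolutely continuous with density $\eta(\cdot,\Sigma)$, which is Lemma \ref{limsupF}.
\end{itemize}

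\emph{Step 3: comparison with $m_{F_{\varepsilon_k}}$.} We first show $F'(v,Q_r(x))=F''(v,Q_r(x))$ whenever $v=u_\Sigma$ near $\partial Q_r(x)$, arguing by contradiction exactly as for \eqref{eq:uguali}.

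For the upper bound on $\eta$ in \eqref{eq:convminimaF}, take almost minimizers $v_k$ of $m_{F_{\varepsilon_k}}(u_\Sigma,Q_{\theta r}(x))$ along a subsequence realizing the $\liminf$ in $k$. Extend them by $u_\Sigma$ and note they are bounded in $W^{1,p^-}$. A limit $v$ satisfies
\[
m_{F'}(u_\Sigma,Q_r(x))\le F'(v,Q_r(x))\le \liminf_k F_{\varepsilon_k}(v_k,Q_{\theta r}(x))+\beta(2+\gamma(\Sigma))(1-\theta^d)r^d.
\]
Dividing by $|Q_r(x)|$ and letting $r\to0^+$, then $\theta\to1^-$, gives the bound.

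For the lower bound, take an almost minimizer $v$ for $m_{F'}(u_\Sigma,Q_r(x))$, so that $F'(v,Q_r(x))=F''(v,Q_r(x))$, and a recovery sequence $v_k$. Join $v_k$ to $u_\Sigma$ near $\partial Q_r(x)$ with the fundamental estimate (Lemma \ref{fundest} for $F_\varepsilon$). This yields $\liminf_k m_{F_{\varepsilon_k}}(u_\Sigma,Q_r(x))\le m_{F'}(u_\Sigma,Q_r(x))$ up to errors of order $\sigma$ and $r^d-(r-\delta)^d$.

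\emph{Main obstacle.} The fundamental estimate requires $v_k\to u_\Sigma$ in $L^{p^+}$ on the frame, which calls for truncation. Unlike in Proposition \ref{G''min}, truncating does not decrease $F_{\varepsilon_k}$, because $f$ is not radially increasing. We will therefore repeat the dyadic level selection of Lemma \ref{subadditivityF}. Choose $K=2^J$, with $J$ between $M$ and $2M$ and $M>\sqrt d|\Sigma|$ large, such that along a further subsequence the energy on $\{2^J<|v_k|\le 2^{J+1}\}$ is less than $\sigma$. By \eqref{growth-f} this gives
\[
F_{\varepsilon_k}(v_k^K,Q_r(x))\le F_{\varepsilon_k}(v_k,Q_r(x))+\sigma+\frac{\beta}{M}\|v_k\|_{L^1}.
\]
After that, the remaining steps are routine.
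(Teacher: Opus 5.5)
Your proposal is correct and follows the paper's proof essentially step by step. The steps match: continuity of $m_{F'}(u_\Sigma,Q_r(x))$ in $r$; the argument of \cite{bfm} using the measure and coercivity properties of $F'$; compactness of almost-minimizers for the upper bound on $\eta$; and, for the lower bound, a recovery sequence truncated at a dyadically selected level $2^J$ (as in Lemma \ref{subadditivityF}) before applying the fundamental estimate.

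The one superfluous ingredient is the equality $F'(v,Q_r(x))=F''(v,Q_r(x))$, which the paper deliberately avoids. Proving it ``exactly as for \eqref{eq:uguali}'' would need an analogue of \eqref{additivitytilde} for $F''$, which is not available because only \eqref{eq: subaddF} is proved. The equality does hold, via the measure property of $F'(\tilde v,\cdot)$ on $W_\gamma$ together with \eqref{subsequenza} on $U$. In any case you do not need it: a recovery sequence for $F'$, passed to the subsequence from the level selection, already gives the $\liminf_k$ bound required in \eqref{eq:convminimaF}.
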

\begin{proof} Observe that, if  $F'(v, Q_r(x))<+\infty$, then, thanks to Lemma \ref{bounds}, Part (2),  $v \in W_\gamma(Q_r(x), \R^N)$, and, by the condition $v=u_\Sigma$ on a neighborhood of $\partial Q_r(x)$, we have that each competitor for $m_{F'}(u_\Sigma, Q_r(x))$ can be extended equal to $u_{\Sigma}$  outside  $Q_{r}(x)$  so that  such a function belongs to $W_\gamma(U,\R^N)$. 
 \noindent Now,  we note that, with the same proof of \eqref{mdelta}, it holds \begin{equation*}\label{mdeltaF'}
\lim_{\delta\to 0^+}
m_{F'}(u_\Sigma, Q_{r-\delta}(x))
=
m_{F'}(u_\Sigma, Q_r(x)).
\end{equation*} 
 Exploiting the measure and coercivity properties of $F'$, the proof of \eqref{eq:cellformula1F} can now be carried out as in \cite[Lemma 3.3 and 3.5]{bfm}.
The proof of the second inequality in \eqref{eq:convminimaF} can be obtained along similar lines  in  Proposition \ref{G''min}, up to replacing limsup with liminf where needed we note that, thanks to the properties satisfied by   $F'$, we do not need to show any additional property as that claimed  in \eqref{eq:uguali}). Some relevant differences appear instead for the first inequality, which we discuss in detail.
We fix $\sigma>0$ and let $v$ be such that $v=u_\Sigma$ in a neighborhood of $\partial Q_r(x)$ and
\[
F'(v, Q_r(x) )\le m_{F'}(u_\Sigma, Q _r(x))+\sigma\,<+\infty.
\] 
Then $v$ can can be extended to a function in $W_\gamma(U,\R^N)$.  We consider  a recovery sequence $\{v_k\}$ for $F'(v, Q_r(x))$ and a subsequence $\{k_\ell\}$ such that \[
\lim_{\ell \to +\infty} F_{\varepsilon_{k_\ell}}(v_{k_\ell}, Q_r(x))= F'(v, Q_r(x))\,. \]
Now, choose a sufficiently large integer $M$ such that $M\geq 
\beta \sigma^{-1} \|v\|_{L^1(Q_r(x), \R^N)}.
$
Arguing as in \eqref{stima2}, given the sequence $v_{k_\ell}$ we can find a  subsequence and an integer $J\in [2^M,4^M]$ such that 
 \[
\liminf_{m \to +\infty}
F_{\varepsilon_{k_{\ell_m}}}\bigl(v_{k_{\ell_m}}^{J}, Q_r(x)\bigr)
\;\le\;
\lim_{m \to +\infty}
F_{\varepsilon_{k_{\ell_m}}}\bigl(v_{k_{\ell_m}}, Q_r(x)\bigr)
+ 2\sigma
= F'(v, Q_r(x)) + 2\sigma.
\]
    Fixed $\delta>0$, set $z_k=\varphi_kv_k^{J}+(1-\varphi_k)v^{J}$, where $\varphi_k$ is a suitable cutoff function between $Q_{r-\delta}(x)$ and $Q_r(x)$ such that the following  fundamental estimate holds: 
\[
\begin{split}
F_{\varepsilon_k}(z_k, Q_{r}(x)) &\leq (1 + \sigma)\left(F_{\varepsilon_k}(v^J_k, Q_r(x)) + F_{\varepsilon_k}(v, Q_r(x) \setminus \overline{Q_{r-\delta}(x)})\right) \\
&+ M_\sigma \int_{(Q_r(x) \setminus Q_{r-\delta}(x)} |v^J_k - v^J|^{p^+} \, \mathrm{d}x + \sigma. 
\end{split}
    \]
Since $z_k\to v^J$ in $L^1(Q_r(x),{\bf M}^{d\times N})$ and $z_k=u_\Sigma$ on a  neighborhood of   $\partial Q_r(x)$ (provided that $M$ was chosen greater than $\log_2(\|u_{\Sigma}\|_{L^{\infty}(U)}$), using the fundamental estimate \eqref{eq:lp_fundamental_estimate} for $F_\varepsilon$, the growth condition \eqref{alto}, and the previous inequality we obtain 
\[
\begin{split}
        \liminf_{k\to +\infty}m_{F_{\varepsilon_k}}(u_\Sigma, Q_r(x))&\le \liminf_{k\to +\infty} F_{\varepsilon_k}(z_k,Q_r(x))\\
&\le  ( 1+\sigma)\liminf_{m \to +\infty}
F_{\varepsilon_{k_{\ell_m}}}\bigl(v_{k_{\ell_m}}^{J}, Q_r(x)\bigr)+\sigma+\beta\int_{Q_r(x)\setminus Q_{r-\delta}(x)} \left(1+\gamma(Dv)\right)\,\mathrm{d}y \\
&=
( 1+\sigma) F'(v, Q_r(x))+ \sigma+\beta\int_{Q_r(x)\setminus Q_{r-\delta}(x)} \left(1+\gamma(Dv)\right)\,\mathrm{d}y \\
&\le ( 1+\sigma)(m_{F'}(u_\Sigma, Q_r(x))+\sigma)+\sigma+ \beta\int_{Q_r(x)\setminus Q_{r-\delta}(x)} \left(1+\gamma(Dv)\right)\,\mathrm{d}y\,.
\end{split}
    \]
Taking the limit as $\delta\to 0^+$ and as $\sigma\to 0^+$, we obtain 
\[
 \liminf_{k\to +\infty}m_{F_{\varepsilon_k}}(u_\Sigma, Q_r(x))\le m_{F'}(u_\Sigma, Q_r(x))\,; 
\]
on dividing by $|Q_r(x)|$ we get the thesis when $r \to 0^+$.
\end{proof}

We are now in a position to prove the main result of this section.
\begin{proof}[Proof of Theorem \ref{mainthmnonconvex}]
 Let  $\eta$ be the function provided by Lemma \ref{limsupF}. By \eqref{subsequenza} with $D=U$ we have
\begin{equation*}
F'(u,U)= F''(u, U)=\int_U\eta(x,Du)\,\mathrm{d}x
\end{equation*}
 for each $u\in W_{\gamma}({U },\R^N)$.
Taking into account that, by Lemma \ref{bounds}, Part (1) and  (3),   for every $u\in L^1({U },\R^N)$ it holds    \[ F'(u,U)<+\infty \Longleftrightarrow u\in W_{\gamma}({U },\R^N),\] 
this concludes the proof. 
\end{proof}
\section{Stochastic Homogenization}\label{stochastic}
Let $(\Omega,\mathcal F, \mathbb P)$ be a probability space and let $\tau=\{\tau_z\}_{z\in\Z^d}$ be a measure-preserving group action on $(\Omega,\mathcal F, \mathbb P)$. We will assume that $\tau$ is ergodic, that is, the probability of the $\tau$-invariant sets is 0 or 1.  We consider $g\colon \Omega\times \R^d \times \R \to [0, +\infty)$ and $f\colon \Omega\times \R^d\times{\bf M}^{N\times d} \to [0,+\infty)$, both measurable in their product space (endowed with its obvious $\sigma$-algebra), and $1<p^-\le p^+<+\infty$, not depending on $\omega$, satisfying the following assumptions:
\begin{enumerate}
\item for each $\omega\in\Omega$, $g(\omega, \cdot, \cdot)$ and $f(\omega,\cdot,\cdot)$ are Carathéodory functions;

\item for each $\omega\in\Omega$ and  a.e.\ $x \in \mathbb{R}^d$, $g(\omega, x,\cdot)$ is convex and increasing on $[0,+\infty)$;

\item \normalfont(inc)$_{p^-}$ the map
$s \in (0,+\infty) \mapsto \dfrac{g(\omega, x,s)}{s^{p^-}}$
is increasing  for each $\omega\in\Omega$ and  a.e.\ $x \in \mathbb{R}^d$;

\vspace{2pt}

\item \normalfont(dec)$_{p^+}$ the map
$s \in (0,+\infty) \mapsto \dfrac{g(\omega, x,s)}{s^{p^+}}$
is decreasing  for each $\omega\in\Omega$ and  a.e.\ $x \in \mathbb{R}^d$;

\item \normalfont(A0) there exists $\alpha, \beta > 0$ such that
\[
\alpha \le g(\omega, x,1) \le \beta
\]
for each $\omega\in\Omega$ and a.e.\ $x \in \mathbb{R}^d$;

 \item  for each $\omega\in\Omega$, for a.e.  $x\in\R^d$ and for every $\Sigma\in {\bf M}^{N\times d}$,   it holds
    \begin{equation}\label{growth-fomega}
\alpha g(\omega, x, |\Sigma|)\le f(\omega,x,\Sigma) \le \beta\left(1+g(\omega, x, |\Sigma|)\right);
    \end{equation}
    
 \item (stationarity)  $f$ and $g$ are stationary, that is, for a.e. $x\in \R^d$, it holds
   \begin{equation*}        f(\tau_z\omega,x,\Sigma)=f(\omega,x+z,\Sigma)\quad \hbox{ and }\quad  g(\tau_z\omega,x, |\Sigma|)=g(\omega,x+z,|\Sigma|)
\end{equation*}
for every $(\omega,z,\Sigma)\in \Omega\times \Z^d\times {\bf M}^{N\times d} $. 
\end{enumerate}

For the ease of notation we again write $g(\omega, x, \Sigma)$ in place of $g(\omega, x, |\Sigma|)$. As in Section \ref{nonconvexcase},  let $U\subseteq \mathbb R^d$ be a bounded Lipschitz  open set and  we consider the functionals $F_{\varepsilon}, G_\varepsilon:\Omega\times L^1(\R^d,\R^N)\times \mathcal{A}({ U})\to [0,+\infty]$ given by  
\begin{equation}\label{casostoc}
F_{\varepsilon}(\omega,u, V) = 
\begin{cases} 
\displaystyle \int_V f\left(\omega,\frac{x}{\varepsilon}, Du(x)\right) \, dx\,, & \text{if } u \in W^{1,1}(V; \R^N), \\
+\infty & \text{otherwise}
\end{cases}
\end{equation}
and
\begin{equation}\label{G_varepsilonomega}
    G_\varepsilon(\omega, u, V)=\begin{cases} 
\displaystyle \int_V g\left(\omega,\frac{x}{\varepsilon}, Du(x)\right) \, dx\,, & \text{if } u \in W^{1,1}(V; \R^N), \\
+\infty\, & \text{otherwise,}
\end{cases}
\end{equation}
respectively. Moreover, we  set $F_{\varepsilon}(\omega,u):=F_{\varepsilon}(\omega,u, { U})$ and   $G_{\varepsilon}(\omega,u):=G_{\varepsilon}(\omega,u, { U})$.
For every  $V\in \mathcal A_0(U)$ we  set \[\mu_\Sigma(\omega,V)=m_{F_1(\omega,\cdot)}(u_\Sigma,V),\]  where $F_1(\omega,\cdot)$ is the functional given by \eqref{casostoc} taking $\varepsilon=1$ and $m_{F_1(\omega,\cdot)}(\cdot,V)$ is defined as in  \eqref{eq: minimointornobordo}.

\begin{lemma}\label{subproc}
    For each $\Sigma\in {\bf M}^{N\times d}$, we have that $\mu_\Sigma$ is a discrete subadditive process.
\end{lemma}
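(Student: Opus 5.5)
We have to check the three properties of Definition \ref{defsubproc} for $\mu_\Sigma(\omega,V)=\inf\{F_1(\omega,v,V): v=u_\Sigma \text{ near } \partial V\}$: integrability, stationarity and subadditivity. Measurability of $\omega\mapsto\mu_\Sigma(\omega,V)$ is implicit in integrability, and we treat it first.

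\textbf{Integrability and measurability.} The competitor $v=u_\Sigma$ together with \eqref{growth-fomega} and \eqref{growth-p-p+} gives
\[
0\le \mu_\Sigma(\omega,V)\le \beta\int_V\bigl(1+g(\omega,x,|\Sigma|)\bigr)\,dx\le \beta\bigl(1+\beta\max\{|\Sigma|^{p^-},|\Sigma|^{p^+}\}\bigr)|V|.
\]
This bound is uniform in $\omega$, so integrability reduces to measurability. For measurability we restrict the infimum to a countable family. We take a countable set $\mathcal C$ of competitors $u_\Sigma+\phi$, with $\phi$ ranging over a countable $W^{1,\infty}$-dense subset of $C^\infty_c(V,\R^N)$ (for instance, polynomial coefficients times fixed cutoffs). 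Each map $\omega\mapsto F_1(\omega,u_\Sigma+\phi,V)$ is measurable by Fubini, since $f$ is jointly measurable. It then suffices to show that the infimum over $\mathcal C$ equals $\mu_\Sigma(\omega,V)$.

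This density step is the main obstacle, because $f(\omega,x,\cdot)$ is only continuous and not convex, and competitors merely satisfy $f(\omega,x,Dv)\in L^1$. We proceed in three moves:
\begin{enumerate}
\item Given a competitor $v$ with $F_1(\omega,v,V)<\infty$, we have $\int_V\alpha g(\omega,x,Dv)<\infty$ by \eqref{growth-fomega}.
\item We pass to smooth competitors $u_\Sigma+\phi$ with $\phi\in C^\infty_c(V)$ such that $\int g(\omega,x,D\phi_n)\to\int g(\omega,x,D\phi)$ and $D\phi_n\to D\phi$ a.e. The goal here is an approximation in the modular of $g(\omega,\cdot,\cdot)$, a Musielak--Orlicz function satisfying $\Delta_2$ by \eqref{cons2}.
\item We conclude by Vitali's theorem, using the domination $f\le\beta(1+g)$ and the continuity of $f(\omega,x,\cdot)$.
\end{enumerate}

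For step 2, since $v-u_\Sigma$ is compactly supported in $V$, plain mollification would suffice if $g$ did not depend on $x$. In the $x$-dependent case we plan to use the Carath\'eodory property and the $\Delta_2$ bound instead. One option is to approximate $Dv$ in $L^{p^+}$ after first truncating $v-u_\Sigma$ with Lemma \ref{tronc}; this does not immediately give $W^{1,p^+}$, so an alternative is needed. We therefore expect to rely on the Lusin-type/Lipschitz truncation of $W^{1,p^-}$ functions: replace $v$ by Lipschitz functions that coincide with $v$ outside small sets, and handle the error with \eqref{growth-p-p+} and equi-integrability. Convergence then follows from the dominated convergence theorem applied to $\beta(1+\beta\max\{|Dv_n|^{p^-},|Dv_n|^{p^+}\})$, which is bounded where $v_n\neq v$. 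After that, mollifying the Lipschitz compactly supported perturbation gives $W^{1,\infty}$-bounded, a.e.\ convergent gradients, and dominated convergence applies directly. Finally, the boundary condition is kept because the truncation preserves $v=u_\Sigma$ near $\partial V$ when the level exceeds $\|Du_\Sigma\|$.

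\textbf{Stationarity.} For $z\in\Z^d$ and a competitor $v$ on $V+z$, we set $\tilde v(x)=v(x+z)-\Sigma z$. Then $\tilde v=u_\Sigma$ near $\partial V$ and $D\tilde v(x)=Dv(x+z)$. The stationarity assumption $f(\tau_z\omega,x,\cdot)=f(\omega,x+z,\cdot)$ gives $F_1(\tau_z\omega,\tilde v,V)=F_1(\omega,v,V+z)$. The map $v\mapsto\tilde v$ is a bijection between the two competitor classes, so taking infima yields $\mu_\Sigma(\omega,V+z)=\mu_\Sigma(\tau_z\omega,V)$.

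\textbf{Subadditivity.} Let $V_i$ be pairwise disjoint with $|V\setminus\bigcup V_i|=0$. For each $i$ we take an almost-minimizer $v_i$ for $\mu_\Sigma(\omega,V_i)$ with error $\sigma/\#I$, and glue them as in \eqref{tildev}: $v=v_i$ on $V_i$ and $v=u_\Sigma$ elsewhere. Since each $v_i=u_\Sigma$ near $\partial V_i$, $v\in W^{1,1}(V)$, and $v=u_\Sigma$ near $\partial V$ because the $V_i\subset V$ are finitely many and each agrees with $u_\Sigma$ near its own boundary. As $|V\setminus\bigcup V_i|=0$,
\[
\mu_\Sigma(\omega,V)\le F_1(\omega,v,V)=\sum_i F_1(\omega,v_i,V_i)\le \sum_i\mu_\Sigma(\omega,V_i)+\sigma,
\]
and letting $\sigma\to0$ concludes.
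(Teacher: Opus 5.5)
Your integrability bound, the stationarity argument via $\tilde v(x)=v(x+z)-\Sigma z$, and the subadditivity by gluing almost-minimizers are correct and match the paper. The gap is in the measurability step, specifically your step~2.

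\textbf{Why step~2 fails.} You need every competitor $v$ with $F_1(\omega,v,V)<\infty$ to be approximable by $u_\Sigma+\phi_n$, $\phi_n\in C^\infty_c(V,\R^N)$, with convergence of the $x$-dependent modular $\int_V g(\omega,x,Dv)\,dx$. This is a no-Lavrentiev-gap statement, and neither tool you invoke gives it.
\begin{itemize}
\item The $\Delta_2$ property does not imply density of smooth functions in the modular when $g$ depends on $x$. In the generalized Orlicz theory this needs continuity-type conditions in $x$, which are not assumed here.
\item The Lipschitz truncation does not close. On the bad set $\{M(|Dv|)>\lambda\}$ the truncated function has gradient of size $\sim\lambda$, so by \eqref{growth-p-p+} the error is of order $\lambda^{p^+}|\{M(|Dv|)>\lambda\}|$. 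But $Dv\in L^{p^-}$ only gives $\lambda^{p^-}|\{M(|Dv|)>\lambda\}|\to0$.
\item Your ``dominating'' function is bounded only by a constant that blows up with $\lambda$, so dominated convergence does not apply.
\end{itemize}

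\textbf{The identity is actually false.} The claim $\mu_\Sigma(\omega,V)=\inf_{\mathcal C}F_1(\omega,\cdot,V)$ fails under the standing hypotheses; Zhikov's checkerboard is a counterexample. Take $d=2$, $N=1$, $f=g=|\Sigma|^{p(x)}$, with $p$ a $\Z^2$-periodic checkerboard of squares of side $1/2$ taking values $p^-<2<p^+$. All of (1)--(7) hold.
\begin{itemize}
\item On $V=(0,t)^2$, a $W^{1,1}$ competitor may be constant on each $p^+$-square. The jumps between diagonally adjacent $p^+$-squares are absorbed inside the $p^-$-squares by angular interpolation around the common vertices. There $|Dv|\sim r^{-1}$, which has finite $p^-$-energy because $p^-<2$. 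This gives $\mu_\Sigma\le C(|\Sigma|^{p^-}t^2+|\Sigma|^{p^+}t)$.
\item A smooth competitor is continuous across the vertices. Morrey's inequality along the $\sim t$ disjoint diagonal chains of $p^+$-squares crossing $V$ then forces energy at least $c|\Sigma|^{p^+}t^2$.
\item For $|\Sigma|$ large and then $t$ large, the two infima differ.
\end{itemize}

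\textbf{What the paper does instead.} It makes no reduction to smooth competitors; it takes measurability from \cite[Lemma C.1]{RR}. A self-contained argument must work with the whole competitor class. For instance:
\begin{itemize}
\item write the class as the countable union of the Polish spaces $X_n=\{v\in W^{1,1}(V,\R^N): v=u_\Sigma \text{ a.e. on } V\setminus K_n\}$, with $K_n\uparrow V$ compact;
\item check joint measurability of $(\omega,v)\mapsto F_1(\omega,v,V)$ on $\Omega\times X_n$;
\item apply the measurable projection theorem.
\end{itemize}
This yields measurability with respect to the $\mathbb P$-completion of $\mathcal F$.
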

\begin{proof} Let $V\in \mathcal A_0(U)$. Then the  $\mathcal {F}$-measurability of the map $\omega \to \mu_\Sigma(\omega,V)$ follows reasoning as in  \cite[Lemma C.1]{RR}.
By   \normalfont(inc)$_{p^-}$, \normalfont(A0), and \eqref{growth-fomega}, we get that \[\mu_\Sigma(\cdot,V)\le F_1(\cdot, u_{\Sigma}, V) \leq \beta \left( 1+ \beta+\beta|\Sigma|^{p^+} \right)|V|\] so that $\mu_\Sigma(\cdot,V)\in L^1({ \Omega})$.

As $F_1(\omega, \cdot)$ (whenever finite) is an integral functional, for each $\omega$ we have that $\mu_\Sigma(\omega,\cdot)$ is a subadditive  function.
It remains to prove that $\mu_\Sigma$ is stationary.
For each $z\in\Z^d$, $\omega\in\Omega$ and  $V\in \mathcal A_0(U)$, if  $u\in L^1(V; \mathbb{R}^N)$ is  such that $F_1(\tau_z\omega,u, V)<+\infty$ and $u=u_\Sigma$ in a neighborhood of  $\partial V$, then  it holds 
\[
\int_V f(\tau_z\omega,x,Du(x))\,\mathrm{d}x=\int_{V+z} f(\omega,y,D\left(u(y-z)+\Sigma z\right)\,\mathrm{d}y=\int_{V+z} f(\omega,y,Dv(y))\,\mathrm{d}y
\]
where $v(y)=u(y-z)+\Sigma z$  satisfies  $v=u_\Sigma$ in a neighborhood of  $\partial (V+z)$. Therefore
\begin{equation*}
\begin{aligned}
    \mu_\Sigma(\tau_z\omega,V) &=\inf_{u\in L^1(V,\mathbb R^N)} \left\{F_1(\tau_z\omega,u, V)\mid u=u_\Sigma \text{ in a neighborhood of } \partial V\right\}\\ &\geq \inf_{v\in L^1(V+z,\mathbb R^N)} \left\{F_1(\omega,v, V+z)\mid v=u_\Sigma\text{ in a neighborhood of } \partial (V+z)\right\}\\
   &= \mu_\Sigma(\omega,V+z).
\end{aligned}
\end{equation*}
The proof of the converse inequality is analogue.
\end{proof}

The following lemma is substantially \cite[Proposition 1]{dmmodica}. Recall that we say that the subadditive process $\mu_\Sigma$ is ergodic whenever $\tau$ is ergodic.

\begin{lemma}\label{lemma genstochastic}
For each $\Sigma\in {\bf M}^{N\times d}$ there exists a set $\Omega_\Sigma\subseteq \Omega$ of full measure such that
    \begin{equation*}\label{varphi}
    \varphi(\omega,\Sigma)=\lim_{t \to +\infty} \frac{\mu_\Sigma (\omega,tQ)}{t^d|Q|}
\end{equation*}
for every $\omega\in\Omega_\Sigma$ and every cube $Q$ in $\R^d$. Moreover, if $\mu_\Sigma$ is ergodic, then $\varphi$ is independent of $\omega$. 
\end{lemma}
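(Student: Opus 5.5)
This is the classical application of the Akcoglu--Krengel subadditive ergodic theorem, in the form of \cite[Proposition 1]{dmmodica}. Lemma \ref{subproc} shows that $\mu_\Sigma$ is a discrete subadditive process. We also have the uniform bound $0\le \mu_\Sigma(\omega,V)\le c_\Sigma |V|$ with $c_\Sigma=\beta(1+\beta+\beta|\Sigma|^{p^+})$, coming from testing with $u_\Sigma$ itself. By Akcoglu--Krengel (in the version for processes indexed by bounded Lipschitz sets, or by cubes/rectangles with integer vertices), for every fixed cube $Q$ with integer vertices there is a set of full measure on which $\mu_\Sigma(\omega,tQ)/(t^d|Q|)$ converges as $t\to+\infty$ along integers. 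The limit is a $\tau$-invariant random variable $\varphi(\omega,\Sigma)$.

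The first step is to show that the limit does not depend on the cube. Taking the countable family of cubes with rational vertices and intersecting the corresponding full-measure sets gives a single set $\Omega_\Sigma$. To remove the dependence on the cube and to pass from integer to real $t$, I will compare $\mu_\Sigma(\omega,tQ)$ with $\mu_\Sigma(\omega,t'Q')$ for nested cubes $Q'\subset Q\subset Q''$. Subadditivity gives, for instance,
\[
\mu_\Sigma(\omega, tQ'')\le \mu_\Sigma(\omega,tQ)+\mu_\Sigma\bigl(\omega, t(Q''\setminus\overline Q)\bigr)\le \mu_\Sigma(\omega,tQ)+c_\Sigma t^d|Q''\setminus Q|.
\]
Here I use the fact that competitors equal to $u_\Sigma$ near the boundary can be glued, and that the frame $Q''\setminus\overline Q$ is handled by the affine competitor. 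The frame is not Lipschitz-regular in the required sense, so I would instead cover it by finitely many small cubes. Together with the reverse inequality, the quantity $\mu_\Sigma(\omega,tQ)/t^d$ is squeezed between the values for rational cubes up to an error $c_\Sigma|Q''\setminus Q'|$, which can be made arbitrarily small. A cube $Q$ centered at an arbitrary point is handled in the same way, since for large $t$ the set $tQ$ lies between dilates of rational cubes whose centers are close after rescaling. Translations by $tx_0$ are absorbed because the dilation center matters only up to an $O(1/t)$ relative shift, again controlled by the linear bound. This also yields the limit over real $t\to\infty$, not just integer $t$.

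The main obstacle is precisely this uniformity in the cube, namely the limit along arbitrary real $t$ for cubes not centered at the origin. Akcoglu--Krengel gives a.s. convergence only for each fixed sequence of dilated sets with a fixed center. I plan to overcome it by the sandwich argument above, relying only on monotonicity-type subadditivity and the linear upper bound $c_\Sigma|V|$, as in \cite{dmmodica}.

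Finally, for ergodicity: $\varphi(\cdot,\Sigma)$ is $\tau_z$-invariant by stationarity. Indeed, $\mu_\Sigma(\tau_z\omega,tQ)=\mu_\Sigma(\omega,tQ+z)$, and the limit for the cube $tQ+z=t(Q+z/t)$ coincides with that for $tQ$ by the previous step. An invariant measurable function is a.s. constant when $\tau$ is ergodic, so $\varphi$ is deterministic, possibly after shrinking $\Omega_\Sigma$ by a null set.
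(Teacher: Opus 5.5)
There is a genuine gap in the step where you claim that the sandwich argument removes the dependence of the limit on the cube.

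\textbf{Why the sandwich cannot do this.} The only comparison you have is one-sided: for $A\subset B$ you get $\mu_\Sigma(\omega,B)\le \mu_\Sigma(\omega,A)+c_\Sigma|B\setminus A|$. Squeezing $tQ$ between $tQ'$ and $tQ''$ therefore pins down $\lim_t \mu_\Sigma(\omega,tQ)/|tQ|$ only if the limits for $Q'$ and $Q''$ are already known to coincide. It can only ever relate cubes whose symmetric difference is small compared with their volume.

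It says nothing about $Q=(0,1)^d$ versus $(-\tfrac12,\tfrac12)^d$, or about $Q$ versus $Q+x_0$ with $x_0\neq 0$ fixed. Indeed $t(Q+x_0)=tQ+tx_0$ is displaced by $tx_0$, which is of the order of the side length. The relative shift is $x_0$, not $O(1/t)$. Your $O(1/t)$ remark is valid only for shifts of bounded size, such as $tQ+z$ versus $tQ$ with $z$ fixed, or $tQ$ versus $\lfloor t\rfloor Q$.

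Consequently, if Akcoglu--Krengel only gave convergence along each family $\{kQ\}$ separately, as you say, you would end up with limits $\varphi_Q(\omega,\Sigma)$ that may depend on the position of $Q$. Nothing in your proposal excludes this. Your invariance argument in the last paragraph tacitly needs the same fact, since it compares $tQ+z$ with fixed cubes near $Q$.

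\textbf{The missing ingredient.} You need the full strength of \cite[Theorem 2.7]{AK}, which is what the paper uses. The a.s.\ limit is one and the same function for every regular sequence of cubes with integer vertices. Hence there is a single set $\Omega_\Sigma$ of full measure on which $\mu_\Sigma(\omega,kQ)/(k^d|Q|)\to\varphi(\omega,\Sigma)$ as $k\to\infty$, $k\in\N$, for \emph{every} cube $Q$ with vertices in $\Z^d$.

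Once this is in place, your remaining steps are fine and essentially coincide with the paper's:
\begin{itemize}
\item passage to real $t$, where your bounded-shift observation is exactly what is needed when $0\notin\overline Q$, since then $\lfloor t\rfloor Q\not\subset tQ$;
\item rescaling to rational cubes;
\item inner and outer rational cubes for arbitrary $Q$, with the frames covered by small cubes;
\item invariance under fixed shifts $z$ plus ergodicity for the last claim, where the paper simply cites \cite[Remark, page 59]{AK}.
\end{itemize}
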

\begin{proof}

    By the Subadditive Ergodic theorem \cite[Theorem 2.7]{AK}, for each $\Sigma\in {\bf M}^{N\times d}$  there exists a set $\Omega_\Sigma\subseteq \Omega$ of full measure such that \begin{equation}\label{limitenaturali}
    \lim_{\substack{t \to +\infty \\ t \in \mathbb{\N}}} \frac{\mu_\Sigma (\omega,tQ)}{t^d |Q|}=\varphi(\omega,\Sigma)
    \end{equation}
    for every $\omega\in\Omega_\Sigma$ and every cube $Q$ with vertices in $\Z^d$.
    It remains to address the problem of extending the limit \eqref{limitenaturali} to real numbers and general cubes.
    Fix $\Sigma\in {\bf M}^{N\times d}$ and $\omega\in\Omega_\Sigma$. Given a cube $Q$, denote by $Q'$ and $Q$ the smallest cube containing $Q$ with vertices in $\Z^d$ and the largest cube contained in $Q$ with vertices in $\Z^d$, respectively. 
For each $t\in\R$, denote by $\lfloor t \rfloor$ and $\lceil t \rceil$ the largest integer smaller than $t$ and the smallest integer larger than $t$, respectively.  By applying the  subadditivity  proved in Lemma \ref{subproc}, we have
    \begin{equation*}
     \begin{aligned}
        \mu_\Sigma(\omega,tQ)
        &\leq \mu_\Sigma(\omega,\lfloor t \rfloor Q)+\mu_\Sigma(\omega,tQ\setminus \lfloor t \rfloor Q)\\
       & \le \mu_\Sigma(\omega,\lfloor t \rfloor Q)+\beta\left(1+\beta+\beta|\Sigma|^{p^+}\right)|tQ\setminus \lfloor t \rfloor Q|,
    \end{aligned}
        \end{equation*}
    and similarly 
    \begin{equation*}
        \mu_\Sigma(\omega,\lceil t \rceil Q)\le \mu_\Sigma(\omega,tQ)+\beta\left(1+ \beta+\beta|\Sigma|^{p^+}\right)|\lceil t \rceil Q\setminus tQ|.
    \end{equation*}
Thus, dividing by $t^d |Q|$ and taking into account that $\lceil t \rceil^{-d} \leq t^{-d} \leq  \lfloor t \rfloor^{-d}$,  we can  apply \eqref{limitenaturali} on $\mu_\Sigma(\omega,\lfloor t \rfloor Q)\lceil t \rceil^{-d} $ and on $\mu_\Sigma(\omega,\lfloor t \rfloor Q)\lfloor t \rfloor^{-d}$, to obtain that,  for every cube $Q$ with vertices in $\Z^d,$ it holds 
    \begin{equation*}
    \lim_{\substack{t \to +\infty \\ t \in \mathbb{\R}}} \frac{\mu_\Sigma (\omega,tQ)}{t^d|Q|}=\varphi(\omega,\Sigma).
\end{equation*}
Since for each cube $Q$ with vertices in $\Q^d$ there exists $s\in\N$ such that $sQ$ has vertices in $\Z^d$, the preceding limit holds also for cubes with vertices in $\Q^d$.
    Given a general cube $Q$ in $\R^d$, for each $\sigma>0$ let $\widehat {Q}_\sigma$ and $Q_\sigma$ be respectively a cube contained in $Q$ with vertices in $\Q^d$ and a cube containing $Q$ with vertices in $\Q^d$ such that $|Q\setminus \widehat {Q}_\sigma|+|Q_\sigma\setminus Q|<\sigma$. Arguing as before, we get that, for every $t>0$, it holds  
\begin{equation*}
        \mu_\Sigma(\omega, t \widehat {Q}_\sigma)-C\sigma t^d\le\mu_\Sigma(\omega, t Q)\le \mu_\Sigma(\omega,t Q_\sigma)+C\sigma t^d
    \end{equation*}
    for some constant $C>0$. Sending first  $t\to +\infty$ and then $\sigma\to 0$, this implies that 
    \begin{equation*}
    \varphi(\omega,\Sigma)=\lim_{t \to +\infty} \frac{\mu_\Sigma (\omega,tQ)}{t^d|Q|}
\end{equation*}
     holds for every cube $Q$ in $\R^d$.  Finally, if  $\tau$ is ergodic, from \cite[Remark, page 59]{AK}, it follows that $\varphi$ is independent of $\omega$.
\end{proof}

\subsection{Stochastic Homogenization in the model case}
In this subsection we will treat the special case when $f(\omega, x, \Sigma)=g(\omega, x,\Sigma)$; hence, throughout this section, for every $V\in \mathcal A(U)$ we consider $\mu_\Sigma(\omega,V)=m_{G_1(\omega,\cdot)}(u_\Sigma,V)$. The main result of this subsection is the following.

\begin{thm}\label{gammamodelomega}  Let  $U\subseteq \mathbb R^d$ be a bounded connected Lipschitz open set and let    $G_\varepsilon$  be given by \eqref{G_varepsilonomega}. Then,  under the hypotheses $(1)$--$(7)$ assumed in this section,  there exists a set $\Omega'\subseteq \Omega$ of full measure and a function $\zeta\colon \Omega\times {\bf M}^{N\times d}\to [0,+\infty)$, measurable  in the product space, such that  for each $\omega\in \Omega'$ 
\begin{enumerate}
\item $\zeta(\omega,\cdot)$ is an $N$-function;
\item $\zeta(\omega,\cdot)$ satisfies the reinforced-$\Delta_2$ property: 
\begin{equation*}\zeta(\omega,A \Sigma)\leq \max\{ \|A\|^{p^+},\|A\|^{p^-}  \} \zeta(\omega,\Sigma) \qquad  \forall A\in {\bf M}^{N\times N},  \forall \Sigma \in {\bf M}^{N\times d}\,;  \end{equation*}
\item the family $G_\varepsilon(\omega,\cdot)$ 
$\Gamma$-converges, with respect to the $L^1(U,\R^N)$ norm, to the functional 
 \begin{align}\label{omegamodel-limit}
G(\omega, u):=
\left\{
\begin{array}{l}
\displaystyle\int_{U} \zeta(\omega, D u)\,dx\quad \mbox{if }u\in W_{\zeta}(U,\R^N), \\
+\infty \quad \mbox{elsewhere in }L^1(U,\R^N)\,.
\end{array}
\right.   
\end{align}
\end{enumerate}
Furthermore, if $u\in W_{\zeta}(U, \R^N)$, then, for every subsequence $\varepsilon_k\to 0$ and every $V\in\A(U)$, we have  
\begin{equation}\label{eq: omegalocal}
G'(\omega, u, V)=G''(\omega, u, V)=\int_V\zeta(\omega, Du)\,dx \qquad \hbox{ for every $\omega\in \Omega'$}.
\end{equation}
Moreover, for every $\omega\in \Omega'$, $\zeta(\omega,\cdot)$ satisfies  the following asymptotic homogenization formula:
\begin{align}\label{zhom}
\zeta(\omega, \Sigma)=\lim_{t\to +\infty}\inf\left\{\frac 1{t^d}\int_{(0,t)^d} g(\omega, x,Du(x))\,\mathrm{d}x\mid u=u_\Sigma \text{ in a neighborhood of } \partial \left((0,t)^d\right)
        \right\}
\end{align}
and, if $\tau$ is ergodic, then $\zeta$ is independent of $\omega$.
\end{thm}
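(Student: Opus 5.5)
Fix $\omega$ and a sequence $\varepsilon_k\to0$. By Remark \ref{no-periodicity}, the results from Lemma \ref{boundsp+} to Proposition \ref{G''min} hold for $G_\varepsilon(\omega,\cdot)$ without periodicity. So, after extracting a subsequence satisfying \eqref{epsk}, we obtain a Carathéodory, convex, even integrand $\gamma_\omega(x,\Sigma)$ representing $G''(\omega,\cdot,V)$ on $W^{1,p^+}$. Moreover, \eqref{eq:convminima} ties $\gamma_\omega(x,\Sigma)$ to the limits of the cell minima $m_{G_{\varepsilon_k}(\omega,\cdot)}(u_\Sigma,Q_r(x))$. The plan is to show, via the ergodic theorem, that $\gamma_\omega(x,\Sigma)=\zeta(\omega,\Sigma)$ for $\omega$ in a full-measure set independent of the subsequence. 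Once this holds, the periodic arguments (Proposition \ref{gammaN}, Corollary \ref{necessario}, Lemma \ref{lowerbound}, Lemma \ref{estratte}) carry over, and the Urysohn property concludes.

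\textbf{Step 1 (identification).} Let $\zeta(\omega,\Sigma)$ be the limit $\varphi(\omega,\Sigma)$ from Lemma \ref{lemma genstochastic} with $f=g$. Set $\Omega'=\bigcap_{\Sigma\in{\bf M}^{N\times d}_{\Q}}\Omega_\Sigma$, which has full measure. Rescaling gives
\[
m_{G_{\varepsilon}(\omega,\cdot)}(u_\Sigma,Q_r(x))=\varepsilon^d\,\mu_\Sigma\bigl(\omega,\varepsilon^{-1}Q_r(x)\bigr).
\]
Lemma \ref{lemma genstochastic} applies to every cube, in particular $Q=Q_r(x)$ with $t=\varepsilon^{-1}$. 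Hence $m_{G_{\varepsilon}(\omega,\cdot)}(u_\Sigma,Q_r(x))/|Q_r(x)|\to\zeta(\omega,\Sigma)$ for every $r,x$ and every rational $\Sigma$. Inserting this into \eqref{eq:convminima}, both sides equal $\zeta(\omega,\Sigma)$, so $\gamma_\omega(x,\Sigma)=\zeta(\omega,\Sigma)$ for a.e. $x$ and all rational $\Sigma$.

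\textbf{Step 2 (extension to all matrices).} Extend $\zeta(\omega,\cdot)$ to all of ${\bf M}^{N\times d}$ by continuity. Continuity holds because $\gamma_\omega(x,\cdot)$ is convex with growth $\beta(1+|\Sigma|^{p^+})$, hence locally Lipschitz with uniform constants. This yields \eqref{zhom} on $\Omega'$, using the monotonicity in $t$ argument of Lemma \ref{lemma genstochastic} for real $t$. Product measurability of $\zeta$ follows because it is a pointwise limit of the measurable maps $\omega\mapsto\mu_\Sigma(\omega,tQ)/t^d$, continuous in $\Sigma$. Since $\zeta$ is now $x$-independent, the proof of Proposition \ref{gammaN} applies verbatim (it only used radial monotonicity, \eqref{cons2} and (A0)). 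It gives the reinforced-$\Delta_2$ property, the bounds \eqref{youngfunc}, and that $\zeta(\omega,\cdot)$ is an $N$-function; this proves (1) and (2). Corollary \ref{necessario} then gives the upper bound on $W_\zeta$.

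\textbf{Step 3 (lower bound and conclusion).} The main obstacle is the liminf inequality, Lemma \ref{lowerbound}. There, translation invariance of $G'$ came from periodicity, which we no longer have. I would avoid translations altogether. Since $\gamma_\omega$ no longer depends on $x$ and, by \eqref{epsk}, $G'(\cdot,D)=G''(\cdot,D)=\int_D\zeta(Du)$ on $W^{1,p^+}$, it suffices to approximate a general $u$ with $G'(u,V)<\infty$ from inside.

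First I would try the following. Using \eqref{eq: subadd} and inner regularity, reduce to $V'\Subset V$. Take a recovery sequence $u_k$ and truncate it as in Lemma \ref{subadditivity}, so that $G_{\varepsilon_k}$ does not increase. Then use the De Giorgi--Ioffe lower semicontinuity of convex functionals together with the blow-up method. At a Lebesgue point $x_0$ of $Du$, the measures $g(x/\varepsilon_k,Du_k)\,dx$ converge weakly to some $\lambda$. Comparing with the cell minima $m_{G_{\varepsilon_k}}(u_{Du(x_0)},Q_r(x_0))$, via a fundamental-estimate boundary modification as in the proof of Proposition \ref{G''min}, gives $d\lambda/dx(x_0)\ge\zeta(\omega,Du(x_0))$. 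The boundary layer is controlled by the $L^{p^+}$ convergence of the truncations.

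Alternatively, one can show $G'(u,V)\ge\int_V\zeta(Du)$ by representing $G'$ through Theorem \ref{repr}, after checking (H2) via Lemmas \ref{subadditivity} and \ref{innreg} in the $W_\zeta$ setting, exactly as for $F'$ in Lemma \ref{limsupF}. Proposition \ref{minF''eta} then identifies its density as $\zeta$, again through the cell minima and Step 1.

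With both bounds, Lemma \ref{estratte} gives \eqref{omegamodel-limit} and \eqref{eq: omegalocal} along the subsequence. The limit $\zeta$ is independent of the subsequence, so the Urysohn property yields the full $\Gamma$-convergence for every $\omega\in\Omega'$. Finally, ergodicity gives $\omega$-independence by Lemma \ref{lemma genstochastic}.
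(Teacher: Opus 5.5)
Your Steps 1--2 follow the paper (Lemma \ref{prop1dalmaso}, Proposition \ref{gamma=varphi}), with one omission. Continuity of $\gamma_\omega(x,\cdot)$ lets you extend $\zeta(\omega,\cdot)$ from rational matrices, but it does not show that $\mu_\Sigma(\omega,tQ)/t^d$ converges for irrational $\Sigma$. So \eqref{zhom} is not yet proved for all $\Sigma$. This also matters in your Step 3, where you compare with cell minima at $\Sigma=Du(x_0)$, which is typically irrational. The paper gets it from two facts. First, $\Sigma\mapsto\mu_\Sigma(\omega,V)/|V|$ is convex: a convex combination of admissible competitors is admissible and $G_1(\omega,\cdot,V)$ is convex. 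Second, these maps are bounded by $C(1+|\Sigma|^{p^+})$. Together these make the maps locally equi-Lipschitz uniformly in $t$.

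The genuine gap is in Step 3.

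\textbf{Blow-up route.} You truncate the recovery sequence ``as in Lemma \ref{subadditivity}'', i.e.\ at a fixed level $M$, and say the boundary layer is controlled by the $L^{p^+}$ convergence of the truncations. That convergence only handles $k\to\infty$. After it, the fundamental-estimate error at $x_0$, scale $r$ and layer width $\delta r$ is of order
\[
\delta^{-p^+}\,\frac{1}{r^{d}}\int_{Q_r(x_0)}\Bigl|\frac{u^M(x)-u(x_0)-Du(x_0)(x-x_0)}{r}\Bigr|^{p^+}\mathrm{d}x ,
\]
and this need not vanish as $r\to0^+$. The function $u^M\in W^{1,p^-}\cap L^\infty$ is in general only $L^{(p^-)^*}$-differentiable a.e. Its $L^\infty$ bound is useless after dividing by $r$. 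The paper explicitly allows $p^-<d$ with $p^+>(p^-)^*$.

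The missing idea, which is the content of Proposition \ref{prop: omegaliminf}, is a scale-dependent truncation centred at $u(x_0)$: $w_{k,r}=(u_k-u(x_0))^{Mr}$ with $M>\sqrt d\,|Du(x_0)|+1$. This works for three reasons:
\begin{itemize}
\item radial monotonicity of $g$ still gives $G_{\varepsilon_k}(\omega,w_{k,r},\cdot)\le G_{\varepsilon_k}(\omega,u_k,\cdot)$;
\item the rescaled limits are bounded by $3M$ and coincide with $u-u(x_0)$ outside $K_r=\{|u-u(x_0)|\ge Mr\}$;
\item $|K_r|/r^d\to0$ by \eqref{eq: apdif}.
\end{itemize}
Together these upgrade $L^1$ approximate differentiability to \eqref{eq: daprovare2}.

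\textbf{Representation route.} Your alternative through Theorem \ref{repr} does not repair this. The hypothesis (H3), inner regularity, and hence (H2) are only available for $u\in W_\zeta(U,\R^N)$. So at best you represent $G'(\omega,u,\cdot)$ on $W_\zeta$. You still cannot show $G'(\omega,u,U)=+\infty$ for $u\in W^{1,p^-}(U,\R^N)\setminus W_\zeta(U,\R^N)$, which \eqref{omegamodel-limit} requires. Moreover, Proposition \ref{minF''eta} starts from Lemma \ref{bounds}(2), i.e.\ from the model-case liminf for arbitrary $u$, so invoking it for $G'$ is circular. You need a liminf argument valid for every $u\in W^{1,1}$, such as the truncated blow-up above.
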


To prove the theorem, we will adapt the steps in Section \ref{convex homog} to the setting at hand. We start collecting some additional properties of the density $\zeta$, which only require minor modifications to \cite{dmmodica}.

\begin{lemma}\label{prop1dalmaso}
    There exists a set $\Omega'\subseteq \Omega$ of full measure such that, for every  $\omega\in\Omega'$ and every $\Sigma\in {\bf M}^{N\times d}$, the limit 
    \begin{equation}\label{eq: zeta}
    \zeta(\omega,\Sigma)=\lim_{t \to +\infty} \frac{\mu_\Sigma (\omega,tQ)}{t^d|Q|}
\end{equation}
is well defined and independent of the cube  $Q$ in $\R^d$.
In particular, $\zeta$ is convex, satisfies \eqref{zhom} and if $\tau$ is ergodic, then $\zeta$ is independent of $\omega$.  Moreover, the function $\zeta$ is measurable on the  product space $\Omega\times {\bf M}^{N\times d}$.  
\end{lemma}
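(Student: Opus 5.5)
The plan follows \cite[Proposition 1]{dmmodica}, starting from Lemma~\ref{lemma genstochastic} applied with $f=g$. For each $\Sigma\in{\bf M}^{N\times d}_{\Q}$, Lemma~\ref{lemma genstochastic} gives a full-measure set $\Omega_\Sigma$ on which the limit $\zeta(\omega,\Sigma)$ in \eqref{eq: zeta} exists for every cube $Q$. The set $\Omega'':=\bigcap_{\Sigma\in{\bf M}^{N\times d}_{\Q}}\Omega_\Sigma$ is a countable intersection, hence of full measure. On $\Omega''\times{\bf M}^{N\times d}_{\Q}$ the function $\zeta$ is thus well defined, independent of $Q$, and independent of $\omega$ under ergodicity.

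Next I would establish uniform estimates in $t$. First, the bound $\mu_\Sigma(\omega,tQ)/|tQ|\le\beta\max\{|\Sigma|^{p^-},|\Sigma|^{p^+}\}$ follows from testing with $u_\Sigma$ and using \eqref{growth-p-p+}. Second, the normalized minima $\Sigma\mapsto\mu_\Sigma(\omega,tQ)/|tQ|$ are convex: given almost-minimizers $v_1,v_2$ for $\Sigma_1,\Sigma_2$, the combination $\lambda v_1+(1-\lambda)v_2$ is admissible for $\lambda\Sigma_1+(1-\lambda)\Sigma_2$, and $g(\omega,x,\cdot)$ is convex because it is a convex increasing function of the norm. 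Convex functions that are locally uniformly bounded are locally equi-Lipschitz on compact sets. This is the step I expect to be the crux. I would use it to show that for $\omega\in\Omega''$ the limsup and liminf as $t\to\infty$ of $\mu_\Sigma(\omega,tQ)/|tQ|$ agree for every real $\Sigma$: approximate $\Sigma$ by rational matrices and use the uniform Lipschitz bound to control the error. Hence the limit exists on $\Omega':=\Omega''$ for all $\Sigma$, for every cube $Q$. Convexity of $\zeta(\omega,\cdot)$ passes to the limit, and $\zeta(\omega,\cdot)$ coincides with the continuous extension of its restriction to rational matrices.

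Formula \eqref{zhom} is then \eqref{eq: zeta} with $Q=(0,1)^d$, because $\mu_\Sigma(\omega,(0,t)^d)$ is precisely the infimum appearing there. For measurability, each $\omega\mapsto\mu_\Sigma(\omega,tQ)$ is $\mathcal F$-measurable by Lemma~\ref{subproc}. Hence $\omega\mapsto\zeta(\omega,\Sigma)$ is measurable for rational $\Sigma$, as a limit along $t\in\N$, after setting $\zeta=0$ off $\Omega'$. So $\zeta$ is measurable in $\omega$ and continuous in $\Sigma$, that is, Carath\'eodory, and therefore jointly measurable on $\Omega\times{\bf M}^{N\times d}$. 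Finally, under ergodicity, $\zeta(\cdot,\Sigma)$ is a.s.\ constant for each rational $\Sigma$ by Lemma~\ref{lemma genstochastic}. Removing a further null set and using continuity in $\Sigma$ yields independence of $\omega$ for all $\Sigma$.
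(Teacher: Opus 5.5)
Your proposal is correct and follows essentially the paper's proof. It proceeds through rational matrices via Lemma~\ref{lemma genstochastic} and a countable intersection, uses convexity and equiboundedness of $\Sigma\mapsto\mu_\Sigma(\omega,tQ)/|tQ|$ to get local equicontinuity, extends to all $\Sigma$ by approximation, and obtains joint measurability from the Carath\'eodory property. The only cosmetic difference is that you squeeze $\limsup_t$ and $\liminf_t$ together directly with the uniform Lipschitz bound, whereas the paper extracts locally uniformly convergent subsequences of $\psi_{t_k,Q}$ (Arzel\`a--Ascoli) and exchanges limits; your treatment of the ergodic case (discarding a further countable union of null sets, then using continuity in $\Sigma$) is, if anything, slightly more careful than the paper's.
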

\begin{proof}

For each $\omega\in\Omega$ and  $V\in\mathcal A_0(U)$, we consider the functions $$\Sigma\mapsto  \frac{\mu_\Sigma(\omega,V)}{|V|}.$$
They  are equibounded between 0 and $\beta\left(1+\beta+\beta|\Sigma|^{p^+}\right)$.   We claim that they are also convex. Indeed, fix $\omega\in \Omega$, $V\in\mathcal A_0(U)$,  let  $\lambda\in [0,1]$ and $\Sigma_1,\Sigma_2\in {\bf M}^{N\times d}$. Fixed $\eta>0$, we choose $u_1,u_2$ so that they are admissible in the sense of the definition, respectively, of $\mu_{\Sigma_1}$ and $\mu_{\Sigma_2}$ and satisfy
\begin{equation*}
    G_1(\omega, u_1, V)<\mu_{\Sigma_1}(\omega,V)+\eta, \quad G_1(\omega,u_2, V)<\mu_{\Sigma_2}(\omega,V)+\eta.
\end{equation*} 
By the convexity of $G_1(\omega,\cdot,V)$ we have 
\begin{equation*}
\begin{aligned}
    \mu_{ \lambda\Sigma_1+(1-\lambda)\Sigma_2}(\omega, V)&\le G_1(\omega,\lambda u_1+(1-\lambda) u_2,V)\\ &\le \lambda G_1(\omega,u_1,V)+(1-\lambda)G_1(\omega,u_2,V)\\ &<\lambda\mu_{\Sigma_1}(\omega,V)+(1-\lambda)\mu_{\Sigma_2}(\omega,V)+\eta.
    \end{aligned}
\end{equation*}
Letting $\eta\to 0^+$, the claim is proven.

As a result of convexity and equiboundedness, we have that,   for each fixed $\omega\in\Omega$  and for every fixed cube   $Q\subseteq \R^d,$  the  family $\{\psi_{t,Q}\}_{t>0}$ given by  $$\psi_{t,Q}(\Sigma)= \frac{\mu_\Sigma(\omega,tQ)}{|tQ|} $$ is  locally equicontinuous and equibounded on $ {\bf M}^{N\times d}$ and,     defined   \[Q_t=\{x\in\R^d\mid |x_i|<t, i=1,\dots,d\},\] 
 the function
 \[
    \zeta(\omega,\Sigma)=\limsup_{t\to +\infty} \frac{\mu_\Sigma(\omega,Q_t)}{|Q_t|}\]
 is convex with respect to $\Sigma$. 
Now, for every $\Sigma\in {\bf M_{\Q}}^{d\times N}$ let $\Omega'_\Sigma $ be the set given by  Lemma \ref{lemma genstochastic}. Setting
\begin{equation*}                \Omega'=\bigcap_{\Sigma\in {\bf M_{\Q}}^{d\times N}}\Omega'_\Sigma,
\end{equation*}
we have that  $\mathbb P(\Omega')=1$ and, thanks to Lemma \ref{lemma genstochastic}, it holds
 \begin{equation}\label{suirazio}
  \lim_{t \to +\infty} \frac{\mu_\Sigma (\omega,tQ)}{t^d|Q|}=  \lim_{t \to +\infty} \frac{\mu_\Sigma (\omega,t(-1,1)^d)}{ (2t)^d}= \lim_{t \to +\infty} \frac{\mu_\Sigma(\omega,Q_t)}{|Q_t|}= \zeta(\omega,\Sigma)
\end{equation}
    for each $\omega\in\Omega'$, $\Sigma\in {\bf M_{\Q}}^{N\times d}$, for each cube $Q\subseteq \R^d$.
Now, let   $Q\subseteq \R^d$ be a cube, $\omega\in \Omega'$ and 
$\Sigma\in {\bf M}^{N\times d}$.  Let $t_k\to +\infty$. Up to a subsequence, we have that  $\{\psi_{t_k,Q}\}$   uniformly converges   on the compact subsets of  ${\bf M}^{N\times d}$.
  Let   $\{\Sigma_n\}\subseteq {\bf M_{\Q}}^{N\times d}$  be a sequence  converging to $\Sigma$.
 Then, by exploiting the uniform convergence of $\{\psi_{t_k,Q}\} $, the limit  \eqref{suirazio} and  the continuity of $\zeta(\omega,\cdot)$,  it holds:
 \begin{equation*}
\begin{aligned}
 \lim_{k\to +\infty}\frac{\mu_\Sigma(\omega,t_k Q)}{t_k^d|Q|} &=
\lim_{k\to +\infty} \psi_{t_k,Q} (\Sigma)=\lim_{k\to +\infty} \lim_{n\to +\infty} \psi_{t_k,Q} (\Sigma_n)=\lim_{n\to +\infty} \lim_{k\to +\infty} \psi_{t_k,Q} (\Sigma_n)\\
&= \lim_{n\to +\infty}   \lim_{k\to +\infty}\frac{\mu_{\Sigma_n}(\omega,t_kQ)}{t_k^d|Q|}= \lim_{n \to +\infty} \zeta(\omega,\Sigma_n)= \zeta(\omega,\Sigma). 
\end{aligned}
\end{equation*}
Thanks to the arbitrariness of the sequence $\{t_k\}$, this implies that \eqref{eq: zeta} holds    for each $\omega\in\Omega'$, $\Sigma\in {\bf M}^{N\times d}$ and for every   cube $Q\subseteq \R^d$. In particular,  if $\tau$ is ergodic, from \cite[Remark, page 59]{AK}, it follows that $\zeta$ is independent of $\omega$. Moreover, thanks to \cite[Theorem 2.7]{AK},  the function $\zeta(\cdot, \Sigma)$ is measurable on $\Omega$ for every  $\Sigma\in {\bf M}^{N\times d}$.  Being $\zeta(\omega, \cdot)$  convex and finite (hence continuous), it easily follows that $\zeta$ is measurable on the space product $\Omega\times {\bf M}^{N\times d}$.  Finally, taking $Q=(0,1)^d$ we get \begin{equation*}
\begin{aligned}
\zeta(\omega,\Sigma)
&= \lim_{t\to +\infty}\frac{\mu_\Sigma(\omega,tQ)}{|tQ|} = \lim_{t\to +\infty}\inf\left\{\frac 1{t^d} G_1(\omega,u,tQ)\mid u=u_\Sigma \text{ in a neighborhood of } \partial \left((0,t)^d\right)\right\} \\
&= \lim_{t\to +\infty}\inf \left\{\frac 1{t^d}\int_{(0,t)^d} g(\omega, x,Du(x))\,\mathrm{d}x \mid u=u_\Sigma \text{ in a neighborhood of } \partial \left((0,t)^d\right)\right\}\,,
\end{aligned}
\end{equation*}
that is \eqref{zhom}.
\end{proof}

The $\Gamma$-liminf inequality  is the only point where we need to depart to the proof of the corresponding result in Section \ref{convex homog}, and can be obtained by a typical blow-up argument which we sketch below.
\begin{prop}\label{prop: omegaliminf}
For all $\omega \in \Omega'$, $u \in W^{1,1}(U,\R^N)$ and $V \in \mathcal{A}(U)$,    and for every  sequence  ${\varepsilon_k}\to 0$  it holds
    \begin{equation*}
        \int_V \zeta(\omega, Du)\, dx\le G'(\omega, u, V).
    \end{equation*}
\end{prop}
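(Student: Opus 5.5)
We plan to prove the $\Gamma$-liminf inequality by the blow-up method of Fonseca and M\"uller, combined with the asymptotic formula \eqref{eq: zeta} for $\zeta(\omega,\cdot)$ from Lemma~\ref{prop1dalmaso}.

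Fix $\omega\in\Omega'$ and assume $G'(\omega,u,V)<+\infty$. Take a sequence $u_k\to u$ in $L^1(V,\R^N)$ with $\liminf_k G_{\varepsilon_k}(\omega,u_k,V)=G'(\omega,u,V)$; after passing to a subsequence this liminf is a limit. By Lemma~\ref{boundsp+} (which does not use periodicity, Remark~\ref{no-periodicity}), $Du\in L^{p^-}$. We consider the measures $\lambda_k:=g(\omega,x/\varepsilon_k,Du_k)\mathcal L^d\llcorner V$. They are bounded, so a further subsequence converges weakly$^*$ to some Radon measure $\lambda$ with $\lambda(V)\le G'(\omega,u,V)$. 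It therefore suffices to show that
\[
\frac{d\lambda}{d\mathcal L^d}(x_0)\ge\zeta(\omega,Du(x_0))\quad\text{for a.e. }x_0\in V.
\]
We choose $x_0$ to be a Lebesgue point of $Du$, a point of approximate differentiability of $u$ in the $L^{1}$-sense, and a point where the Radon--Nikodym derivative exists and is finite. We then pick radii $r\to0$ with $\lambda(\partial Q_r(x_0))=0$. Setting $\Sigma=Du(x_0)$, this gives
\[
\frac{d\lambda}{d\mathcal L^d}(x_0)=\lim_{r}\lim_k\frac{1}{r^d}\int_{Q_r(x_0)}g\Big(\omega,\frac x{\varepsilon_k},Du_k\Big)\,dx .
\]

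The main obstacle is modifying $u_k$ near $\partial Q_r(x_0)$ so that it matches $u_\Sigma$ (up to a constant) there, without raising the energy by more than $o(r^d)$. Near $\partial Q_r(x_0)$ we only know that $u_k-u_\Sigma-u(x_0)\to u-u(x_0)-\Sigma(\cdot-x_0)$ in $L^1$, and this limit is $o(r)$ in $L^1$-average; the gradients have only $p^-$ integrability. We plan to handle this with the fundamental estimate Lemma~\ref{fundest}, applied between $Q_{\theta r}(x_0)$ and $Q_r(x_0)$, which joins $u_k$ with the affine map $u(x_0)+\Sigma(\cdot-x_0)$. The error term involves $L^{p^+}$ norms, so before gluing we first blow up the difference and then apply the truncation of Lemma~\ref{tronc}. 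Concretely, we rescale $w_{k,r}(y)=(u_k(x_0+ry)-u(x_0))/r$ on $Q_1$; these converge in $L^1$ to $w_r\to u_\Sigma$ as $r\to0$. We truncate $w_{k,r}-u_\Sigma$ at a fixed level $M$ by $\psi_M$. Since $g$ is radially increasing, $|D\psi_M|\le1$ and \eqref{cons2} apply, and the pointwise bound $|D(u_\Sigma+\psi_M(w-u_\Sigma))|\le |\Sigma|+|Dw|+|\Sigma|$ increases the energy only by a factor $2^{p^+}$ times $\max\{\dots\}$. To control it sharply, we instead truncate along a selected layer exactly as in the proof of Lemma~\ref{subadditivityF}, choosing the dyadic level where little energy sits. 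After truncation the $L^{p^+}$ error tends to zero, since it is bounded in $L^\infty$ and converges in $L^1$. A diagonal choice $k=k(r)$ then produces competitors $v_r$ with $v_r=u_\Sigma$ near $\partial Q_1$.

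The rescaled problem is now a cell problem on the cube $(1/(r\varepsilon_{k}))Q_1$ shifted by $x_0/\varepsilon_k$, with $t=1/(r\varepsilon_{k(r)})\to\infty$. This gives
\[
\frac{d\lambda}{d\mathcal L^d}(x_0)\ge(1+\sigma)^{-1}\liminf\frac{\mu_\Sigma(\omega,tQ+x_0/\varepsilon_k)}{t^d}-C(1-\theta^d)-\sigma .
\]
Here the shift is not an integer, and the cube is not of the form $tQ$ for a fixed $Q$. We address this by noting that $tQ+x_0/\varepsilon_k=t(Q+x_0 r)$ when expressed relative to the origin, which is a cube whose centre varies with $t$. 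To reduce to fixed cubes, we enclose it in, or compare it with, cubes $tQ'$ using the subadditivity of $\mu_\Sigma$ and the bound $\mu_\Sigma(\cdot,A)\le C|A|$, in the same way as in Lemma~\ref{lemma genstochastic}. Alternatively, and more cleanly, we use the Akcoglu--Krengel theorem in its version for \emph{regular families} of cubes, which are uniformly comparable to cubes centred at the origin; this yields the limit $\zeta(\omega,\Sigma)$ on $\Omega'$ for rational $\Sigma$. We then pass to all $\Sigma$ using the equicontinuity shown in Lemma~\ref{prop1dalmaso}. Letting $\theta\to1$ and $\sigma\to0$ concludes the proof.
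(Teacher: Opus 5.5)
Your overall strategy (blow-up via the measures $\lambda_k$, truncation, the fundamental estimate between $Q_{\theta r}(x_0)$ and $Q_r(x_0)$, comparison with $\mu_\Sigma$) is the same as the paper's, which follows \cite[Proposition~4.6]{RS23}. The last step, however, fails as written, because of a scaling error.

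\textbf{The gap.} The cell problem produced by $Q_r(x_0)$ at scale $\varepsilon_k$ lives on $\varepsilon_k^{-1}Q_r(x_0)=x_0/\varepsilon_k+(r/\varepsilon_k)Q_1(0)$. So the correct parameter is $t=r/\varepsilon_k$, not $1/(r\varepsilon_k)$, and $\varepsilon_k^{-1}Q_r(x_0)=t\,Q_1(x_0/r)$, not $t(Q+rx_0)$. Along your diagonal $k=k(r)$, the normalized cubes $Q_1(x_0/r)$ escape to infinity as $r\to0$ (for $x_0\neq0$). Consequently:
\begin{itemize}
\item they cannot be enclosed in a fixed cube $Q'$ of comparable volume;
\item they do not form a regular family in the Akcoglu--Krengel sense, since any increasing family of cubes containing them has volume ratio of order $(|x_0|/r)^d\to\infty$;
\item $k(r)$ depends on $\omega$ through the recovery sequence, so an almost-sure statement for a fixed family could not be applied anyway.
\end{itemize}
Under mere ergodicity nothing controls $\mu_\Sigma$ on such mesoscopic cubes far from the origin. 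So the inequality $\liminf t^{-d}\mu_\Sigma(\omega,\cdot)\ge\zeta(\omega,\Sigma)$ on your moving cubes is unproved.

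\textbf{The fix.} Do not diagonalize. Keep $r$, $\theta$, $\sigma$ and the truncation level fixed, and let $k\to\infty$ first.
\begin{itemize}
\item The glued competitors are admissible for $m_{G_{\varepsilon_k}(\omega,\cdot)}(u_\Sigma,Q_r(x_0))=\varepsilon_k^d\mu_\Sigma(\omega,\varepsilon_k^{-1}Q_r(x_0))$, by \eqref{perpriscalaminimi} and invariance of the energy under additive constants.
\item Since $Q_r(x_0)$ is a fixed cube, Lemma \ref{prop1dalmaso} gives $m_{G_{\varepsilon_k}(\omega,\cdot)}(u_\Sigma,Q_r(x_0))/r^d\to\zeta(\omega,\Sigma)$ for every $\Sigma$ and every $\omega\in\Omega'$. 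This is exactly the identity used in Proposition \ref{gamma=varphi}, and it needs no rational approximation of $\Sigma$.
\item After $k\to\infty$, the error terms (layer energy, energy on the outer region, $L^{p^+}$ gluing error) are bounded by constants times $\|w_r-u_\Sigma\|_{L^1(Q_1(0))}$ and $1/M$. These vanish as $r\to0$ and then $M\to\infty$; after that, let $\theta\to1$ and $\sigma\to0$.
\end{itemize}

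\textbf{On the truncation.} Your truncation of the difference $w_{k,r}-u_\Sigma$, with selection of a dyadic layer, does work, but it is heavier than the paper's route. The paper truncates $u_k-u(x_0)$ itself at level $Mr$ with $M>\sqrt d|\Sigma|+1$. This leaves the affine target $\Sigma(x-x_0)$ untouched on $Q_r(x_0)$ and never increases the energy, since $g$ is radially increasing and $|D\psi_{Mr}|\le1$. No layer selection is then needed. The only thing to check is the $L^{p^+}$ closeness \eqref{eq: daprovare2}. This follows from \eqref{eq: apdif} by splitting $Q_r$ into $\{|u-u(x_0)|\ge Mr\}$ and its complement.
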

\begin{proof}
  The proof can essentially be obtained by repeating the computations of \cite[Proposition~4.6]{RS23}, 
which however passes through an approximation of the integrand: the function $g$ is approximated by integrands 
$g_k$ with $L^{q}$-growth for $q<1^*$. We therefore briefly show how the argument can be carried out in our setting, with no approximation, 
by means of a truncation method, indicating only the necessary modifications.  

We fix a sequence $\{u_k\}$ converging to $u$ in $L^{1}(U,\R^N)$  such that
 \[G'(\omega, u, V)=\liminf_{k\to +\infty} G_{\varepsilon_k} (\omega,u_k,V)<+\infty.\]
Let $x_0\in V$ be  a Lebesgue point of $Du$ where  \begin{equation}\label{eq: apdif}
\lim_{r \to 0}\frac{1}{r^d}\int_{Q_r}\left|\frac{u(x)-u(x_0)-D u(x_0)\cdot (x-x_0)}{r}\right|\mathrm{d}x=0\,.
\end{equation}
For fixed $r>0$, we consider $M>\sqrt d|D u(x_0)|+1$ and set $w_{k, r}(x)=(u_k(x)-u(x_0))^{Mr}$ where the truncations  at the level $Mr$  are defined as in   \eqref{troncata} and we omit the dependence on the (fixed) value $M$ for the sake of brevity.
For each positive $0<\eta<1$,  on  the cubes $Q_{\eta r}$ and  $Q_{ r}$  centered in $x_0$, since $g$ is radially increasing and  $|D w_{k, r}(x)|\leq  |Du_{k}(x)|$ for a.e. $x\in U$, we have
\[
G_{\varepsilon_k}(\omega, \eta w_{k, r}, Q_r) \le G_{\varepsilon_k}(\omega, \eta u_k, Q_r) \quad \mbox{ and } \quad G_{\varepsilon_k}(\omega, \eta  w_{k, r} , Q_r\setminus Q_{\eta r}) \le G_{\varepsilon_k}(\omega, \eta u_k, Q_r \setminus Q_{\eta r})\,.
\]
With this, we can perform the same construction as in \cite{RS23}, using $w_{k, r}$ in place of $u_k$, but when using the growth condition from above we need to show that
\begin{equation}\label{eq: daprovare1}
\lim_{r \to 0}\lim_{k\to +\infty}\frac{1}{r^d}\int_{Q_r}\left|\frac{w_{k, r}(x)-D u(x_0)\cdot (x-x_0)}{r}\right|^{p^+}\mathrm{d}x=0.
\end{equation}

As $w_{k, r} \to u_r:=(u(x)-u(x_0))^{Mr}$ in every $L^p$, to get \eqref{eq: daprovare1} it suffices   to show
\begin{equation}\label{eq: daprovare2} 
\lim_{r \to 0}\frac{1}{r^d}\int_{Q_r}\left|\frac{u_{r}(x)-D u(x_0)\cdot (x-x_0)}{r}\right|^{p^+}\mathrm{d}x=0\,.
\end{equation}
We note that, for a.e. $x\in Q_r$, it holds
\begin{equation}\label{stima1}
 \left|\frac{u_{r}(x)-D u(x_0)\cdot (x-x_0)}{r}\right| \leq 2 M+|D u(x_0)|\frac{|x-x_0|}{r} \leq 2M+\sqrt{d}|D u(x_0)|\leq 3M. 
\end{equation}
Moreover, we set $K_r:=\{ x \in Q_r: \, |u(x)-u(x_0)|\ge Mr\}$ and observe that, thanks to the choice of $M$,  by construction,   we have
\[
\left|\frac{u(x)-u(x_0)-D u(x_0)\cdot (x-x_0)}{r}\right|\ge  M- |D u(x_0)|\sqrt{d}> 1\, \quad \hbox{ for a.e. }x \in K_r.
\]
Hence, using \eqref{eq: apdif}, we get
\[
\begin{split}
\lim_{r \to 0}\frac1{r^d}\int_{K_r}\left|\frac{u_{r}(x)-D  u(x_0)\cdot (x-x_0)}{r}\right|^{p^+}\mathrm{d}x\le (3M)^{p^+}\lim_{r \to 0}\frac{|K_r|}{r^d}\\
\le (3M)^{p^+}\lim_{r \to 0}\frac{1}{r^d}\int_{Q_r}\left|\frac{u(x)-u(x_0)-D u(x_0)\cdot (x-x_0)}{r}\right|\mathrm{d}x=0\,.
\end{split}
\]
Instead,  since $u_r(x)= u(x)-u(x_0)$ for a.e.  $x\in Q_r \setminus K_r$,   by taking into account also \eqref{stima1},   on  $Q_r \setminus K_r$   it holds
\[
\begin{split}
\left|\frac{u_{r}(x)-D u(x_0)\cdot (x-x_0)}{r}\right|^{p^+}\le (3M)^{p^+-1}\left|\frac{u_{r}(x)-D u(x_0)\cdot (x-x_0)}{r}\right|\\
=(3M)^{p^+-1}\left|\frac{u(x)-u(x_0)-D u(x_0)\cdot (x-x_0)}{r}\right|\,.
\end{split} 
\]
Hence
\[
\begin{split}
&\lim_{r \to 0}\frac{1}{r^d}\int_{Q_r\setminus K_r}\left|\frac{u_{r}(x)-D u(x_0)\cdot (x-x_0)}{r}\right|^{p^+}\mathrm{d}x\\
&\le (3M)^{p^+-1}\lim_{r \to 0}\frac{1}{r^d}\int_{Q_r}\left|\frac{u(x)-D u(x_0)\cdot (x-x_0)}{r}\right|\mathrm{d}x=0\,,
\end{split}
\]
so that we get \eqref{eq: daprovare2} and we can now conclude repeating the argument  in~\cite[Proposition~4.6]{RS23}.
\end{proof}

As for the $\Gamma$-limsup inequality, one has to be a little bit cautious. We need indeed to apply Lemma \ref{limsup} and the two sided estimate \eqref{eq:convminima}, which hold in principle along a subsequence satisfying \eqref{epsk}, that
possibly depends on $\omega$. We start by stating a result up to taking a subsequence.

\begin{prop}\label{gamma=varphi}
Fix $\omega\in\Omega'$. Then the  function $\zeta(\omega,\cdot)$, given by \eqref{eq: zeta}, is an $N$-function and satisfies the reinforced-$\Delta_2$ property. Furthermore, there exists a subsequence $\varepsilon_{k_j}$ such that 
\begin{equation}
    G''(\omega,u,V)=\Gamma(L^1)\text{-} \limsup_{j\to \infty}G_{\varepsilon_{k_j}}(\omega,u,V)\le \int_V \zeta(\omega,Du)\, \mathrm{d}x
\end{equation} 
for each $u\in W_{\zeta}(U,\R^d)$ and each $V\in\A(U)$.  
\end{prop}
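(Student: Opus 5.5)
Fix $\omega\in\Omega'$ and an arbitrary sequence $\varepsilon_k\to0$. The plan is to run the machinery of Section \ref{convex homog} for the single integrand $g(\omega,\cdot,\cdot)$, which by Remark \ref{no-periodicity} does not need periodicity up to Proposition \ref{G''min}. Then we identify the blow-up density $\gamma$ of Lemma \ref{limsup} with $\zeta(\omega,\cdot)$ via Proposition \ref{G''min} and Lemma \ref{prop1dalmaso}. Concretely, by compactness of $\Gamma$-convergence and a diagonal argument we extract $\varepsilon_{k_j}$ such that \eqref{epsk} holds for a countable dense family $\mathcal D$. Lemmas \ref{boundsp+}--\ref{limsup} then give a Carathéodory, convex, even $\gamma_\omega(x,\Sigma)$ with $G''(\omega,u,V)=\int_V\gamma_\omega(x,Du)\,dx$ for $u\in W^{1,p^+}$. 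Proposition \ref{G''min} supplies the two-sided estimate \eqref{eq:convminima}.

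Next we show $\gamma_\omega(x,\Sigma)=\zeta(\omega,\Sigma)$ for a.e. $x$ and every rational $\Sigma$, hence for every $\Sigma$ by continuity of both functions in $\Sigma$. The tool is the scaling identity
\[
m_{G_{\varepsilon}(\omega,\cdot)}(u_\Sigma,Q_\rho(x))=\varepsilon^d\,\mu_\Sigma\bigl(\omega,\varepsilon^{-1}Q_\rho(x)\bigr),
\]
obtained by the change of variables $v(y)=\varepsilon^{-1}u(\varepsilon y)$. Since $\varepsilon^{-1}Q_\rho(x)=t\,Q$ with $t=\rho/\varepsilon\to\infty$ and $Q=Q_1(x/\rho)$ a fixed cube, Lemma \ref{prop1dalmaso} (valid for every cube) gives
\[
\lim_{k}\frac{m_{G_{\varepsilon_k}}(u_\Sigma,Q_\rho(x))}{|Q_\rho(x)|}=\zeta(\omega,\Sigma)
\]
for every $\rho$ and $x$. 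The inner limits in $k$ in \eqref{eq:convminima} are therefore all equal to $\zeta(\omega,\Sigma)$, both for $\rho=r$ and for $\rho=\theta r$. The two sides of \eqref{eq:convminima} thus collapse, giving $\gamma_\omega(x,\Sigma)=\zeta(\omega,\Sigma)$ at a.e. $x$. Representing $\gamma$ by a version independent of $x$, we get $G''(\omega,u,V)=\int_V\zeta(\omega,Du)\,dx$ on $W^{1,p^+}(U,\R^N)$.

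The structural properties of $\zeta(\omega,\cdot)$ then follow exactly as in Proposition \ref{gammaN}. We take a recovery sequence for $u_\Sigma$ and apply \eqref{cons2} pointwise with $s=\|A\|$; this uses only radial monotonicity and the growth assumptions, not periodicity. This yields the reinforced-$\Delta_2$ inequality with constant $\max\{\|A\|^{p^-},\|A\|^{p^+}\}$ and the bounds \eqref{youngfunc}. Together with convexity from Lemma \ref{prop1dalmaso} and evenness, these show that $\zeta(\omega,\cdot)$ is an $N$-function. Finally, Corollary \ref{necessario} applies verbatim with $\gamma$ replaced by $\zeta(\omega,\cdot)$: we approximate $u\in W_\zeta$ by smooth maps via Corollary \ref{senzaLinfty}, upgrade the convergence of the integrals to $L^1$ convergence on every $V$, and conclude by lower semicontinuity of $G''$. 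This gives the stated upper bound along $\varepsilon_{k_j}$.

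The main obstacle is the identification step. One has to check that the scaling identity is exact, including the boundary condition ``in a neighbourhood of $\partial$'', and that Lemma \ref{prop1dalmaso} applies to the cubes $Q_1(x/\rho)$, whose centres and sizes depend on $x$ and $\rho$. This is why the extension of the limit to arbitrary cubes in Lemma \ref{lemma genstochastic} matters. The remaining points to verify are the exceptional $x$-null set (countably many rational $\Sigma$) and the passage to all $\Sigma$ via local equi-Lipschitz continuity of the convex functions.
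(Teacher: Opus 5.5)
Your proposal is correct and follows essentially the same route as the paper. You extract a subsequence satisfying \eqref{epsk}, identify the blow-up density with $\zeta(\omega,\cdot)$ via the scaling identity \eqref{perpriscalaminimi}, the two-sided estimate \eqref{eq:convminima} and Lemma \ref{prop1dalmaso} (first for rational $\Sigma$, then all $\Sigma$ by continuity), and then rerun Proposition \ref{gammaN} and Corollary \ref{necessario}, with your explicit check of the scaling identity and its boundary condition filling in a detail the paper leaves implicit.
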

\begin{proof}
    Fix a cube $Q$ in $\R^d$, $\Sigma\in {\bf M}^{N\times d}$, $t\in\R$ and $\omega\in\Omega'$. By a scaling argument we have
    \begin{equation}\label{perpriscalaminimi}
        \mu_\Sigma(\omega,tQ)=t^d m_{G_{1/t}(\omega,\cdot)}(u_\Sigma,Q).
    \end{equation}
By Remark \ref{no-periodicity}, we can apply all the results going from Lemma \ref{boundsp+} to Proposition \ref{G''min} to the family $G_\varepsilon(\omega, \cdot, \cdot)$. We fix a subsequence $\varepsilon_{k_j}$ such that 
\begin{equation}\label{epskgen}
     G'(\omega,\cdot,D)=G''(\omega, \cdot, D) \quad \hbox{ for every } D \in \mathcal D
 \end{equation}
and we do not further relabel the sequence $\varepsilon_k$ for brevity. Following the steps of Section \ref{convex homog},  for every  $V\in\A(U)$ we represent $G''(\omega,\cdot,V)$, that is the $\Gamma\text{-}\limsup$ of the family $G_{\varepsilon_k}(\omega,\cdot, V)$, as \begin{equation*}\label{reprstocG}
    G''(\omega, u, V)=\int_V\gamma(\omega,x,Du)\,\mathrm{d}x
\end{equation*}
for every $u\in W^{1,p^+}(U,\R^N)$; here $\gamma(\omega, \cdot, \cdot)$ is given by Lemma \ref{limsup}. We want to show that $\gamma(\omega,\cdot,\cdot)$ agrees with $\zeta(\omega,\cdot)$, which entails independence of $x$ (and of $\omega$, if $\tau$ is ergodic).
For $\omega\in \Omega'$,  we use Lemma \ref{G''min} on the function $\gamma(\omega,\cdot,\cdot)$ as follows: the lower bound in \eqref{eq:convminima},   combined with \eqref{perpriscalaminimi} and Lemma \ref{prop1dalmaso}, implies that, for every fixed  $\Sigma\in {\bf M}^{d\times N}$,   there exists a negligible set $N_{\omega, \Sigma}$ such that for every  $x\in U\setminus N_{\omega, \Sigma} $ it holds 
    \begin{equation*}
    \begin{aligned}
        \gamma(\omega,x,\Sigma)&=\lim_{r\to 0^+}\frac{m_{G''(\omega,\cdot)}(u_\Sigma, Q_r(x))}{|Q_r(x)|}\\ &\ge \limsup_{r\to 0^+}
\limsup_{k\to +\infty}
\frac{m_{G_{\varepsilon_k}(\omega,\cdot)}(u_\Sigma, Q_r(x))}{|Q_r(x)|}\\ &=\limsup_{r\to 0^+}
\limsup_{k\to +\infty}  \frac{\mu_\Sigma\left(\omega,\varepsilon_k^{-1} Q_r(x)\right)}{\varepsilon_k^{-d} |Q_r(x)|}\\ &=\zeta(\omega,\Sigma).
\end{aligned}
    \end{equation*}
    Similarly, exploiting the upper bound in   \eqref{eq:convminima}, we also obtain 
    \begin{equation*}
        \gamma(\omega,x,\Sigma)\le\zeta(\omega,\Sigma)
    \end{equation*}
for every $x\in U\setminus N_{\omega, \Sigma}$. An equicontinuity argument shows that the set of full measure where equality holds can be taken  independent of $\Sigma$.  Thus, $\gamma(\omega,x,\cdot)$ and $\zeta(\omega,\cdot)$ are equal a.e. $x\in U$.    Since  $\gamma(\omega, \cdot,\cdot)$ does not depend on $x$ on a set of full measure,   the same arguments in Proposition \ref{gammaN} and Corollary \ref{necessario} entail the remaining properties.
\end{proof}

We are now in a position to prove Theorem \ref{gammamodelomega}.

\begin{proof}[Proof of Theorem~\ref{gammamodelomega}]
By  \eqref{bassop-}, one clearly has $G'(\omega, u, U)=+\infty$ whenever $u \notin W^{1, p^-}(U,\R^d)$.  The properties of $\zeta$  and the representation \eqref{omegamodel-limit} follow from Lemma \ref{prop1dalmaso}, Propositions \ref{prop: omegaliminf} and \ref{gamma=varphi} with $V=U$, upon using the Urysohn property of $\Gamma$-convergence to get rid of subsequences depending on $\omega$. This is possible since $\zeta$ does not depend on the chosen subsequence. Furthermore, by repeating this argument on each Lipschitz subset of $U$, we deduce that, in particular, \eqref{epskgen} is satisfied by {\it any } subsequence $\varepsilon_k$ without further extraction. Hence, all the results in  Proposition \ref{G''min} hold with no need of taking a subsequence, so does also Proposition \ref{gamma=varphi}. Combining the latter with Proposition \ref{prop: omegaliminf}, we get \eqref{eq: omegalocal} and conclude the proof.
\end{proof}

\subsection{Stochastic Nonconvex Homogenization}
We eventually turn to the general case. Throughout this section, we consider the the sets $\Omega_\Sigma$  and the function $\varphi(\omega,\Sigma)$ defined by Lemma \ref{lemma genstochastic}
in $\Omega'\times{\bf M}_{\Q}^{N\times d}$, where $\Omega'=\bigcap_{\Sigma \in {\bf M}_{\Q}^{N\times d}} \Omega_\Sigma$. Clearly $\Omega'$ has full measure and by possibly taking the intersection with another set of full measure we can assume that Theorem \ref{gammamodelomega} holds for $\omega \in \Omega'$. In our main result, one of the points to be checked is that actually the function $\varphi(\omega,\Sigma)$ can be uniquely extended to a Carath\'eodory function on  $\Omega'\times{\bf M}^{N\times d} $.
We state and prove our main result.
\begin{thm}\label{main}
Let  $U\subseteq \mathbb R^d$ be a bounded connected Lipschitz open set and let $F_\varepsilon$  be given by \eqref{casostoc}. Let $\zeta$ be the function given by \eqref{zhom}.
Then, there exists a set $\Omega'\subseteq \Omega$ of full measure and  a Carath\'eodory function  $\varphi\colon \Omega\times {\bf M}^{N\times d}\to [0,+\infty)$   such that,  for each $\omega \in \Omega'$, the family $F_\varepsilon(\omega,\cdot)$ 
$\Gamma$-converges, with respect to the $L^1(U,\R^N)$ norm, to the functional 
 \begin{align*}
F(\omega, u):=
\left\{
\begin{array}{l}
\displaystyle\int_{U} \varphi(\omega, D u)\,\mathrm{d}x\quad \mbox{if }u\in W_{\zeta}(U,\R^N), \\
+\infty \quad \mbox{elsewhere in }L^1(U,\R^N)\,.
\end{array}
\right.   
\end{align*}
Moreover, $\varphi$ satisfies  the following asymptotic homogenization formula:
\begin{align}\label{eq: cellvarphi}
\varphi(\omega, \Sigma)=\lim_{t\to +\infty}\inf\left\{\frac 1{t^d}\int_{(0,t)^d} f(\omega, x,Du(x))\,\mathrm{d}x\mid u=u_\Sigma \text{ in a neighborhood of } \partial \left((0,t)^d\right)
        \right\}
\end{align}
for all $\Sigma \in {\bf M}_{\Q}^{N\times d}$ and, if $\tau$ is ergodic, then $\varphi$ is independent of $\omega$.
\end{thm}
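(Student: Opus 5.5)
The proof will mirror the scheme of Theorem \ref{gammamodelomega}, with Section \ref{nonconvexcase} used in place of Section \ref{convex homog}. The first step defines $\varphi$ on rationals. For $\Sigma\in{\bf M}_{\Q}^{N\times d}$ and $\omega\in\Omega'$, Lemma \ref{subproc} and Lemma \ref{lemma genstochastic} give the limit $\varphi(\omega,\Sigma)=\lim_{t}\mu_\Sigma(\omega,tQ)/(t^d|Q|)$, which is independent of the cube $Q$. Taking $Q=(0,1)^d$ yields \eqref{eq: cellvarphi}, and ergodicity gives independence of $\omega$. Here $\Omega'$ is intersected with the full-measure set of Theorem \ref{gammamodelomega}, so that $\zeta(\omega,\cdot)$ is an $N$-function with the reinforced-$\Delta_2$ property, and \eqref{eq: omegalocal} holds along every subsequence.

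Next, fix $\omega\in\Omega'$ and an arbitrary sequence $\varepsilon_k\to0$, and extract a subsequence satisfying \eqref{subsequenza} for $F_{\varepsilon_k}(\omega,\cdot)$. By Remark \ref{no-periodicity2}, the whole of Section \ref{nonconvexcase} applies with $\gamma$ replaced by $\zeta(\omega,\cdot)$. The only role of periodicity of $g$ there was to make $\gamma$ independent of $x$ and to supply \eqref{eq: local}, and Theorem \ref{gammamodelomega} now provides both. Lemmas \ref{bounds}, \ref{subadditivityF}, \ref{innregF} and \ref{limsupF} then give a Carath\'eodory rank-one convex $\eta(\omega,x,\Sigma)$ with $\alpha\zeta\le\eta\le\beta(2+\zeta)$. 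They also show that $F_{\varepsilon_k}(\omega,\cdot)$ $\Gamma$-converges to $\int_U\eta(\omega,x,Du)\,dx$ on $W_\zeta$ and to $+\infty$ elsewhere (Theorem \ref{mainthmnonconvex}).

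The third step identifies $\eta$. Apply Proposition \ref{minF''eta} with the scaling identity $m_{F_{\varepsilon}(\omega,\cdot)}(u_\Sigma,Q_r(x))=\varepsilon^d\mu_\Sigma(\omega,\varepsilon^{-1}Q_r(x))$, exactly as in \eqref{perpriscalaminimi}. For $\Sigma$ rational, the liminfs in \eqref{eq:convminimaF} become genuine limits equal to $\varphi(\omega,\Sigma)$, by Lemma \ref{lemma genstochastic} applied to the cubes $Q_r(x)$ and $Q_{\theta r}(x)$. Both inequalities then collapse, giving $\eta(\omega,x,\Sigma)=\varphi(\omega,\Sigma)$ for a.e. $x$.

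The main obstacle is passing from countably many rational $\Sigma$ to all $\Sigma$ and obtaining a Carath\'eodory extension. Convexity is no longer available; only rank-one convexity of $\eta(\omega,x,\cdot)$ holds, together with the bound $\eta\le\beta(2+\zeta)$ with $\zeta$ finite and continuous. A rank-one convex function that is locally bounded is locally Lipschitz, with constants controlled by the local bounds (\cite[Theorem 9.1]{BDF}-type estimates). Hence $\varphi(\omega,\cdot)$ restricted to the rationals is locally uniformly Lipschitz, with constants independent of $x$ and of the subsequence. It therefore extends uniquely to a continuous $\varphi(\omega,\cdot)$ on ${\bf M}^{N\times d}$, and on a common full-measure set of $x$ (a countable intersection over rational $\Sigma$) we get $\eta(\omega,x,\cdot)=\varphi(\omega,\cdot)$ by continuity. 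Measurability of $\varphi(\cdot,\Sigma)$ follows for rational $\Sigma$ from \cite[Theorem 2.7]{AK}, and for general $\Sigma$ by pointwise limits, so $\varphi$ is Carath\'eodory.

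Finally, since the limit $\int_U\varphi(\omega,Du)\,dx$ on $W_\zeta(U,\R^N)$ does not depend on the extracted subsequence, the Urysohn property of $\Gamma$-convergence gives $\Gamma$-convergence of the whole family $F_\varepsilon(\omega,\cdot)$ for every $\omega\in\Omega'$.
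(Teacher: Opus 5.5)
Your proposal is correct and follows essentially the same route as the paper. You fix $\omega$, extract a subsequence so that Theorem \ref{mainthmnonconvex} applies with $\zeta(\omega,\cdot)$ in place of $\gamma$ (via Remark \ref{no-periodicity2} and \eqref{eq: omegalocal}), identify $\eta(\omega,x,\Sigma)=\varphi(\omega,\Sigma)$ for rational $\Sigma$ through \eqref{eq:convminimaF} and the scaling of cell minima, extend using the local equi-Lipschitz bounds from rank-one convexity and $\eta\le\beta(2+\zeta)$, and remove the subsequence by the Urysohn property. You also make explicit several points the paper leaves implicit: why both sides of \eqref{eq:convminimaF} reduce to $\varphi$, the countable intersection over rational $\Sigma$, and measurability in $\omega$.
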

\begin{proof}
We first observe that \eqref{eq: cellvarphi} immediately follows from Lemma \ref{lemma genstochastic} (taking $Q=(0,1)^d$), as well as the fact that  $\varphi$ is independent of $\omega$ whenever $\tau$ is ergodic.
Taking also into account Remark \ref{no-periodicity2}, we initially fix a subsequence $\varepsilon_{k_j}$ (possibly depending on $\omega$) so that Theorem \ref{mainthmnonconvex} holds. For this subsequence, there exists a function $\eta(\omega,x,\Sigma)$ which is rank $1$-convex with respect to $\Sigma$ and has $p^+$-growth from above such that 
$F_{\varepsilon_{k_j}}(\omega,\cdot)$ 
$\Gamma$-converges, with respect to the $L^1(U,\R^N)$ norm, to the functional 
 \begin{align*}
F(\omega, u):=
\left\{
\begin{array}{l}
\displaystyle\int_{U} \eta(\omega, x, D u)\,\mathrm{d}x\quad \mbox{if }u\in W_{\zeta}(U,\R^N), \\
+\infty \quad \mbox{elsewhere in }L^1(U,\R^N)\,.
\end{array}
\right.   
\end{align*}
Observe that in particular, by rank $1$-convexity and local boundedness, $\eta(\omega, x, \cdot)$ is  equicontinuous locally on ${\bf M}^{N\times d} $. Now, applying \eqref{eq:convminimaF} and arguing as in Proposition \ref{gamma=varphi} we get that
\[
\varphi(\omega,\Sigma)=\eta(\omega,x,\Sigma)
\]
for a.e. $x\in U$ and for every $(\omega,\Sigma)\in \Omega'\times{\bf M}_{\Q}^{N\times d}$. The equicontinuity of $\eta$ implies now that $\varphi(\omega,\Sigma)$ can be uniquely extended to a Carath\'eodory function on  $\Omega'\times{\bf M}^{N\times d} $, and that the above equality holds for all $\Sigma$. With this, and the Urysohn property of $\Gamma$-convergence we conclude that the desired result holds for the whole family $F_\varepsilon$.
\end{proof}

\noindent\textbf{Funding}
The work of  F.S.~is part of the GNAMPA-INdAM, Project 2026:  “Sistemi multi-agente e replicatore: derivazione particellare e ottimizzazione” ({\tt CUP E53C25002010001}). 

\medskip

\noindent\textbf{Competing interests.} The authors declare no competing interests.

\medskip 

\noindent\textbf{Data availability.} There are no data sets associated with this paper.

\medskip

\noindent\textbf{Author contibutions.} The authors contributed equally to the manuscript.

\end{document}